\newtheorem{theorem}{Theorem}[section]
\newtheorem{proposition}[theorem]{Proposition}
\newtheorem{lemma}[theorem]{Lemma}
\theoremstyle{definition}
\theoremstyle{remark}
\newtheorem{remark}[theorem]{Remark}
 \numberwithin{equation}{section}
\newcommand{\N}{\mathbb N}
\newcommand{\C}{\mathbb C}
\newcommand{\R}{\mathbb R}
\newcommand{\Z}{\mathbb Z}
\tikzset{join/.code=\tikzset{after node path={%
\ifx\tikzchainprevious\pgfutil@empty\else(\tikzchainprevious)%
edge[every join]#1(\tikzchaincurrent)\fi}}}
\tikzset{>=stealth',every on chain/.append style={join},
         every join/.style={->}}
\tikzstyle{labeled}=[execute at begin node=$\scriptstyle,
\begin{document}
\title[The Cousin problem for quasianalytic functions]{Solution to the first Cousin problem for vector-valued quasianalytic functions}
\author[A. Debrouwere]{Andreas Debrouwere}
\address{Department of Mathematics, Ghent University, Krijgslaan 281, 9000 Gent, Belgium}
\email{Andreas.Debrouwere@UGent.be}
\thanks{A. Debrouwere gratefully acknowledges support by Ghent University, through a BOF Ph.D.-grant.}

\author[J. Vindas]{Jasson Vindas}
\thanks{The work of J. Vindas was supported by Ghent University, through the BOF-grant 01N01014.}
\address{Department of Mathematics, Ghent University, Krijgslaan 281, 9000 Gent, Belgium}
\email{Jasson.Vindas@UGent.be}

\subjclass[2010]{Primary 30D60, 46E10, 46E40. Secondary 46M05, 46M18.}
\keywords{Cousin problem; Vector-valued quasianalytic functions; Homological methods in functional analysis}
\begin{abstract}
We study spaces of vector-valued quasianalytic functions and solve the first Cousin problem in these spaces.
\end{abstract}

\maketitle
\section{Introduction}
In abstract terms, the first Cousin problem can be formulated as follows. Let $X$ be a topological space and let $\mathcal{F}$ be a sheaf on $X$. Let $\Omega \subseteq X$ be open and let $\mathcal{M} = \{ \Omega_i \, : \, i \in I\}$ be an open covering of $\Omega$. Suppose that $\varphi_{i,j} \in \Gamma(\Omega_i \cap \Omega_j, \mathcal{F})$, $i,j \in I$, are given sections such that
$$
\varphi_{i,j} + \varphi_{j,k} + \varphi_{k,i} = 0 \qquad \mbox{on }  \Omega_i \cap \Omega_j \cap \Omega_k, 
$$
for all $i,j,k \in I$. Are there $\varphi_{i} \in \Gamma(\Omega_i, \mathcal{F})$, $i \in I$, such that
$$
\varphi_{i,j} = \varphi_{j} - \varphi_{i} \qquad \mbox{on }  \Omega_i \cap \Omega_j,
$$
for all $i,j \in I$? For $X = \C^d$ and $\mathcal{F}$ the sheaf of holomorphic functions the Cousin problem is solvable if $\Omega$ is a Stein open set, as follows from the celebrated Oka-Cartan theorem. This problem was very important for the development of the modern theory of functions of several complex variables and led to the use of sheaf cohomology in that area. We refer to \cite{Gunning} for a clear exposition of the problem. Since every open set in $\R^d$ has a system of complex neighborhoods consisting of Stein open sets, it follows that the Cousin problem is solvable for $X = \R^d$ and $\mathcal{F}$ the sheaf of real analytic functions (where $\Omega$ is now an arbitrary open set). Petzsche announced in \cite{Petzsche1984} the solution to the Cousin problem for quasianalytic classes in connection with the construction of sheaves of infrahyperfunctions, but his article on the subject seems not to have appeared.

The aim of this paper is to show that the Cousin problem is in fact solvable for spaces of quasianalytic functions. We shall also give sufficient conditions on a locally convex space $F$ such that the Cousin problem is solvable in spaces of $F$-valued quasianalytic functions. 
 We mention that in a forthcoming paper \cite{DVVinfra} the authors will apply the vector-valued results from this article to construct sheaves of differential algebras in which the spaces of infrahyperfunctions of class $\{M_p\}$ \cite{Hormander} are embedded in such a way that the ordinary multiplication of ultradifferentiable functions of class $\{M_p\}$ is preserved. Notice that for $M_p = p!$, one obtains a differential algebra that contains the space of all hyperfunctions and in which the multiplication of real analytic functions coincides with their pointwise product. The construction of such algebras and embeddings has been an important and long-standing open question in the non-linear theory of generalized functions.

The analysis of the Cousin problem requires the study of topological properties of the spaces of quasianalytic functions. The space of real analytic functions has been thoroughly investigated in the literature and its locally convex structure is by now well understood; see \cite{Bonet, Domanski4, Domanski3, Martineau2} for the scalar-valued case and \cite{Bonet00, BDV, Domanski2, K-M} for the vector-valued case. This is much less the case for other spaces of quasianalytic functions, although some work has been done \cite{Bonet, Chung, K-M-R, Rosner}. The first part of this article is devoted to studying various useful topological properties of spaces of vector-valued quasianalytic  functions (defined via weight sequences \cite{Komatsu}). Even in the scalar-valued case, some of the results we discuss here appear to be new; for example, we will establish that the spaces of quasianalytic functions of Roumieu type are ultrabornological $(PLN)$-spaces, a fact that is crucial for us to solve the Cousin problem in this case and that, to the best of our knowledge, remained unnoticed in the literature for general open subsets of $\mathbb{R}^{d}$ (see Remark \ref{rkCousin1}).  

The plan of the paper is as follows. Section \ref{section: projective/inductive spectra} explains some basic material on locally convex spaces that we shall need later. In Section \ref{section: scalar valued quasianalytic functions} we prove that the spaces of quasianalytic functions of Roumieu type are ultrabornological with the aid of H\"{or}mander's support theorem for quasianalytic functionals \cite{Heinrich,Hormander}. A generalization of Komatsu's first structure theorem for quasianalytic functionals is discussed in Section \ref{sect-structure}. This result enables us to give an explicit system of seminorms generating the topology on the quasianalytic function spaces of Roumieu type; such a projective topological description plays an important role in the analysis of the vector-valued case. We study vector-valued quasianalytic functions in Section \ref{sect-vector}, we closely follow there Komatsu's approach from \cite{Komatsu3}; in order to discuss their topological properties, we make use of the dual interpolation estimate for the space of real analytic functions \cite{Bonet} and a deep result of Doma\'nski on the $\varepsilon$-product of $(PLS)$-spaces \cite{Domanski2}. The Cousin problem is solved in Section \ref{sect-Cousin}. Our proof is based on duality theory and the vanishing of the $\operatorname{Proj}^1$-functor for ultrabornological $(PLS)$-spaces. The result is extended to the vector-valued case by using the topological properties obtained in Section \ref{sect-vector}. 

We are indebted to the authors of \cite{Bonet,Domanski2},  as many of our proofs below rely on their results. In particular, Doma\'nski's work on the $\varepsilon$-product of $(PLS)$-spaces was very inspiring to us.

\section{Projective and inductive spectra of locally convex spaces}
\label{section: projective/inductive spectra}

In this preliminary section we collect some useful background material  on projective and inductive spectra of locally convex spaces that will be used in the next sections. Of particular importance for us is the characterization of ultrabornological $(PLS)$-spaces due to Vogt and Wengenroth \cite{Wengenroth} that we state below.

Throughout this article every locally convex space (from now on abbreviated as l.c.s.) is assumed to be Hausdorff. Given a l.c.s.\ $X$ we write $X'$ for its topological dual. Unless otherwise stated, we endow $X'$ with the strong topology.
A \emph{projective spectrum} is a sequence $\mathcal{X} = (X_n, \iota^n_{n+1})_{n \in \N}$ consisting of vector spaces $X_n$  and linear 
mappings $\iota^n_{n+1}: X_{n+1} \rightarrow X_n$. 
Set
$$ 
\operatorname{Proj}^0 \mathcal{X} =  \varprojlim_{n \in \N} X_n
$$
and denote  by $\iota_k$, $k \in \N$, the canonical mapping of $\operatorname{Proj}^0 \mathcal{X}$ into $X_k$. 
Define
$$ \operatorname{Proj}^1 \mathcal{X} =   \prod_{n \in \N}X_n / B(\mathcal{X}),$$
where
$$B(\mathcal{X}) = \{(x_n) \in \prod_{n \in \N} X_n \, : \exists (y_n) \in \prod_{n \in \N} X_n \mbox { with }  x_n = y_n - \iota^n_{n+1}(y_{n+1}), \quad \forall n \in \N\}.$$
This definition is due to Palamodov \cite{Palamodov} and coincides with his original definition in terms of homological algebra (see \cite[Sect.\ 3.1]{Wengenroth2}). Let 
\begin{center}
\begin{tikzpicture}
  \matrix (m) [matrix of math nodes, row sep=2em, column sep=2em]
    {0 & \mathcal{X} & \mathcal{Y} & \mathcal{Z} & 0 \\
   };
  { [start chain] \chainin (m-1-1);
\chainin (m-1-2);
\chainin (m-1-3) [join={node[above,labeled] {}}];
\chainin (m-1-4)[join={node[above,labeled] {}}];
\chainin (m-1-5)[join={node[above,labeled] {}}];
}

\end{tikzpicture}
\end{center}
be an exact sequence (in the category of projective spectra) and suppose that $\operatorname{Proj}^1 \mathcal{X} = 0$, then
\begin{center}
\begin{tikzpicture}
  \matrix (m) [matrix of math nodes, row sep=2em, column sep=2em]
    {0 & \operatorname{Proj}^0\mathcal{X} & \operatorname{Proj}^0\mathcal{Y} & \operatorname{Proj}^0\mathcal{Z} & 0 \\
   };
  { [start chain] \chainin (m-1-1);
\chainin (m-1-2);
\chainin (m-1-3) [join={node[above,labeled] {}}];
\chainin (m-1-4)[join={node[above,labeled] {}}];
\chainin (m-1-5)[join={node[above,labeled] {}}];
}

\end{tikzpicture}
\end{center}is again exact. 

A \emph{projective spectrum of l.c.s.}\ is a projective spectrum $\mathcal{X} = (X_n, \iota^n_{n+1})_n$ consisting of l.c.s.\ $X_n$ and continuous linking mappings $\iota^n_{n+1}$. 
The spectrum $\mathcal{X}$ is called \emph{reduced} if the mappings $\iota_k$ have dense range for each $k \in \N$.  

An  \emph{inductive spectrum of l.c.s.}\ is a sequence $\mathcal{X} = (X_n, \sigma^n_{n+1})_{n \in \N}$ of l.c.s.\ $X_n$ and  linear continuous mappings $\sigma^n_{n+1}: X_n \rightarrow X_{n
+1}$. The spectrum $\mathcal{X}$ is called \emph{injective} if $\sigma^n_{n+1}$ is injective for each $n \in \N$.
Set
$$ 
X = \varinjlim_{n \in \N} X_n. 
$$
Denote by $\sigma_k$, $k \in \N$, the canonical mapping of $X_k$ into $X$. The inductive spectrum $\mathcal{X}$ is called \emph{regular} ($\alpha$\emph{-regular}, resp.) if $X$ is Hausdorff and for every bounded set $B$ in $X$ there is $k \in \N$ and a bounded set $A$ in $X_k$ such that $\sigma_k(A) = B$ (there is a - not necessarily bounded - set $A$ in $X_k$ such that $\sigma_k(A) = B$). 

Let  $\mathcal{X} = (X_n, \iota^n_{n+1})_{n \in \N}$ be a projective spectrum of l.c.s.\ and set $X = \varprojlim X_n$. One defines its dual inductive spectrum as $\mathcal{X^*} = (X'_n, {}^t\iota^n_{n+1})_{n 
\in \N}$. If $\mathcal{X}$ is reduced, we have  $X' \cong \varinjlim X'_n$ as vector spaces \cite[p.\ 290]{Kothe}. If, additionally, the spaces $X_n$ are semi-reflexive, the above isomorphism also holds topologically \cite[pp.\ 294 and 300]{Kothe}. Similarly, let  $\mathcal{X} = (X_n, \sigma^n_{n+1})_{n \in \N}$ be an 
inductive spectrum of l.c.s.\ and set $X = \varinjlim X_n$. We define its dual projective spectrum as $\mathcal{X^*} = (X'_n, {}^t\sigma^n_{n+1})_{n \in \N}$.  We always have 
$X' \cong \varprojlim X'_n$ as vector spaces \cite[p.\ 290]{Kothe}. If, moreover, the spectrum $\mathcal{X}$ is regular the above isomorphism also holds topologically. 

A l.c.s.\  $X$ is called a $(PLS)$-space ($(PLN)$-space, resp.) if $X = \varprojlim X_n$
with $(X_n)_{n \in \N}$  a  projective spectrum of $(DFS)$-spaces ($(DFN)$-spaces).
A l.c.s.\  $X$ is said to be an $(LFS)$-space ($(LFN)$-space, resp.) if  $X = \varinjlim X_n$
with $(X_n)_{n \in \N}$  an injective inductive spectrum consisting of $(FS)$-spaces ($(FN)$-spaces).  
The hereditary properties of nuclearity imply that $(PLN)$-spaces are nuclear \cite[Prop.\ 50.1]{Treves}. Vogt and Wengenroth characterized ultrabornological (PLS)-spaces in 
the following way:
\begin{proposition}\cite[Thm.\ 3.3, Thm.\ 3.5]{Wengenroth} \label{VogtandWeng}
Let $\mathcal{X} = (X_n, \iota^n_{n+1})_{n \in \N}$ be a reduced projective spectrum of $(DFS)$-spaces and set 
$X = \varprojlim X_n$. Then, the following statements are equivalent:
\begin{enumerate}
\item[$(i)$] $\operatorname{Proj}^1\mathcal{X} = 0$.
\item[$(ii)$] $X$ is ultrabornological.
\item[$(iii)$] $\mathcal{X^*}$ is $\alpha$-regular. 
\item[$(iv)$] $\mathcal{X^*}$ is regular. 
\end{enumerate}
\end{proposition}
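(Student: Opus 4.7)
The plan is to prove the cyclic chain $(i) \Rightarrow (iv) \Rightarrow (iii) \Rightarrow (ii) \Rightarrow (i)$. The implication $(iv) \Rightarrow (iii)$ is immediate, since a bounded set is \emph{a fortiori} a set, so three substantive steps remain.

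For $(i) \Rightarrow (iv)$, my first task is to unpack $\operatorname{Proj}^1 \mathcal{X} = 0$ through a Retakh/Palamodov-type criterion: in the $(DFS)$ setting this vanishing is equivalent to the bornological Mittag--Leffler condition that for every $n$ there exist $m \geq n$ and a bounded absolutely convex set $B \subseteq X_n$ such that $\iota^n_k(X_k) \subseteq B + \iota^n_m(X_m)$ for all $k \geq m$. Passing to polars in the canonical identification $X' \cong \varinjlim X_n'$, this condition translates directly into the assertion that every bounded subset of $X'$ arises as (the image of) a bounded set from some $X_k'$; that is precisely regularity of $\mathcal{X}^*$.

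For $(iii) \Rightarrow (ii)$, semi-reflexivity of each $X_n$ together with reducedness of $\mathcal{X}$ ensures $X' \cong \varinjlim X_n'$ topologically. Under $\alpha$-regularity every bounded subset of $X'$ sits, as a set, inside some $X_k'$; the $(FS)$-structure of $X_k'$ combined with a Banach--Dieudonn\'e argument then upgrades ``set'' to ``bounded set''. Consequently $X$ is the strong dual of a regular $(LFS)$-space of Fr\'echet--Schwartz steps, and such strong duals are classically ultrabornological (as inductive limits of Banach spaces after bornologification).

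The main obstacle is $(ii) \Rightarrow (i)$. Here I would work with the Palamodov resolution
\[
0 \longrightarrow X \longrightarrow \prod_{n \in \N} X_n \stackrel{\Phi}{\longrightarrow} \prod_{n \in \N} X_n \longrightarrow 0, \qquad \Phi((x_n)) = (x_n - \iota^n_{n+1}(x_{n+1})),
\]
in which $\operatorname{Proj}^1 \mathcal{X}$ is by construction $\operatorname{coker} \Phi$, so the task reduces to proving $\Phi$ surjective. Since $\ker \Phi = X$ is ultrabornological and $\prod_n X_n$ is webbed, the de Wilde open mapping theorem, applied after factoring $\Phi$ through $\prod_n X_n / X$, forces $\operatorname{im} \Phi$ to be all of $\prod_n X_n$; equivalently, one may invoke Palamodov's splitting theorem for ultrabornological $(PLS)$-spaces. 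The delicate point is that no purely algebraic Mittag--Leffler argument suffices in the $(DFS)$-category, so the topological hypothesis on $X$ must really be used through webbed-space/open-mapping machinery --- this is the heart of the Vogt--Wengenroth argument.
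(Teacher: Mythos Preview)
The paper does not actually prove this proposition. It is quoted as Proposition~\ref{VogtandWeng} with an explicit citation \cite[Thm.\ 3.3, Thm.\ 3.5]{Wengenroth} and is used throughout as a black-box tool from the literature; no argument for it appears anywhere in the paper. So there is no ``paper's own proof'' to compare against.

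That said, a few remarks on your sketch. The overall cyclic scheme and the easy implications are fine, and your treatment of $(iii)\Rightarrow(ii)$ captures the right idea (in fact the upgrade from $\alpha$-regular to regular in the $(FS)$-step setting is essentially the content of $(iii)\Leftrightarrow(iv)$, so you are implicitly using that here). The step $(ii)\Rightarrow(i)$, however, has a genuine gap as written. De~Wilde's open mapping theorem tells you that a continuous linear \emph{surjection} from a webbed space onto an ultrabornological space is open; it does not by itself manufacture surjectivity. Factoring $\Phi$ through $\prod_n X_n/X$ gives you an injective map into $\prod_n X_n$, and knowing that $X=\ker\Phi$ is ultrabornological says nothing directly about the range of this map. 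To conclude $\operatorname{Proj}^1\mathcal{X}=0$ one really needs the structure theory specific to countable spectra of $(DFS)$-spaces --- in Wengenroth's original argument this goes through the equivalence with condition $(P_3^\ast)$ and the interplay between the Retakh criterion and the $(LB)$-structure of the steps, not through a bare open-mapping step. Your parenthetical ``equivalently, one may invoke Palamodov's splitting theorem for ultrabornological $(PLS)$-spaces'' is closer to the truth, but note that this is essentially the statement you are trying to prove, so citing it here is circular.
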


\section{Spaces of quasianalytic functions and their duals}\label{section: scalar valued quasianalytic functions}
We now discuss some topological properties of the spaces of quasianalytic functions. We work with ultradifferentiability as defined in \cite{Komatsu}.

Let $(M_p)_{p \in \N}$ be a sequence of positive real numbers and define $m_p := M_p / M_{p -1}$, $p \in \Z_+$. We call $M_p$ a weight sequence if $M_0 = 1$ and $\lim_{p \to \infty} m_p = \infty$.  We make use of the following conditions:
\begin{itemize}
\item [$(M.1)\:$] $M^{2}_{p}\leq M_{p-1}M_{p+1},$  $p\geq 1$,
\item [$(M.2)'$]$M_{p+1}\leq A H^{p+1} M_p$, $p\in\mathbb{N}$, for some $A,H \geq 1$,

\item [$(QA)$\:] $\displaystyle \sum_{p=1}^{\infty}\frac{1}{m_{p}}=\infty. $
\end{itemize}

For $\alpha \in \N^d$ we write $M_\alpha = M_{|\alpha|}$. The associated function of $M_p$ is defined as
$$
M(t)=\sup_{p\in\mathbb{N}}\log\frac{t^p}{M_p},\qquad t > 0,
$$
and $M(0)=0$. We extend $M$ to $\C^d$ as $M(z) = M(|z|)$, $z \in \C^d$. As usual \cite{Komatsu}, the relation $M_p\subset N_p$ between two weight sequences means that there are $C,h>0$ such that 
$M_p\leq Ch^{p}N_{p},$ $p\in\mathbb{N}$. The stronger relation $M_p\prec N_p$ means that the latter inequality remains valid for every $h>0$ and a suitable $C=C_{h}>0$. 

Let $K$ be a regular compact set in $\R^d$, that is, $\overline{\operatorname{int} K} = K$. For $h > 0$ we write $\mathcal{E}^{M_p,h}(K)$ for the Banach space of all $\varphi \in C^\infty(K)$ such that
$$
\| \varphi \|_{K,h} := \sup_{\alpha \in \N^d}\sup_{x \in K} \frac{|\varphi^{(\alpha)}(x)|}{h^{|\alpha|}M_{\alpha}} < \infty. 
$$
For an open set $\Omega$ in $\R^d$ we define
$$
\mathcal{E}^{(M_p)}(\Omega) = \varprojlim_{K \Subset \Omega} \varprojlim_{h \rightarrow 0^+}\mathcal{E}^{M_p,h}(K), \qquad  \mathcal{E}^{\{M_p\}}(\Omega) = \varprojlim_{K \Subset \Omega} \varinjlim_{h \rightarrow \infty}\mathcal{E}^{M_p,h}(K).
$$
The elements of their dual spaces $\mathcal{E}'^{(M_p)}(\Omega)$ and $\mathcal{E}'^{\{M_p\}}(\Omega)$ are called \emph{quasianalytic functionals of class $(M_p)$ or Beurling type in} $\Omega$ and \emph{quasianalytic functionals of class $\{M_p\}$ or Roumieu type in} $\Omega$, respectively. Notice that $\mathcal{E}^{\{p!\}}(\Omega)$ is precisely the space $\mathcal{A}(\Omega)$ of real analytic functions in $\Omega$, while $\mathcal{A}'(\Omega)$ is that of analytic functionals in $\Omega$. 

In the sequel we shall write $\ast$ instead of $(M_p)$ or $\{M_p\}$ if we want to treat both cases simultaneously. In addition, we shall often first state assertions for the $(M_p)$-case followed in parenthesis by the corresponding statements for the $\{M_p\}$-case.

We also need the ensuing assumption on $M_p$:
\begin{itemize}
\item [$(NA)$] $p! \prec M_p$ 
\end{itemize}
in the Beurling case and
\begin{itemize}
\item [$(NE)$] $p! \subset M_p$ 
\end{itemize}
in the Roumieu case. Conditions $(NA)$ and $(NE)$ guarantee that the space of entire functions is dense in $\mathcal{E}^*(\Omega)$ \cite[Prop.\ 3.2]{Hormander} \footnote{H\"ormander actually only considers the Roumieu case but his proof can easily be adapted to cover the Beurling case as well.}. Hence for $\Omega' \subseteq \Omega$ and two weight sequences $M_p$ with $N_p \subset M_p$ we may identify $\mathcal{E}'^{(M_p)}(\Omega')$ ($\mathcal{E}'^{\{M_p\}}(\Omega')$) with a vector subspace of $\mathcal{E}'^{(N_p)}(\Omega)$ ($\mathcal{E}'^{\{N_p\}}(\Omega)$). If  $N_p \prec M_p$ we have that $\mathcal{E}'^{(M_p)}(\Omega') \subset  \mathcal{E}'^{\{N_p\}}(\Omega)$. In particular, we always have $\mathcal{E}'^\ast(\Omega') \subseteq \mathcal{A}'(\Omega)$. 

Unless otherwise explicitly stated, $M_p$ will \emph{always} stand for a weight sequence satisfying $(M.1)$, $(M.2)'$, $(QA)$, and $(NA)$ in the Beurling case or $(NE)$ in the Roumieu case. 

Next, we discuss the notion of support for quasianalytic functionals. For a compact set $K$ in $\R^d$, we define the space of germs of ultradifferentiable functions on $K$ as
$$
\mathcal{E}^\ast[K] = \varinjlim_{K \Subset \Omega} \mathcal{E}^{\ast}(\Omega).
$$
The elements of the dual spaces $\mathcal{E}'^\ast[K]$  are called \emph{local quasianalytic functionals of class $(M_p)$ or Beurling type (of class $\{M_p\}$ or Roumieu type) on} $K$. 
Let $\mathcal{N}(K)$ be a fundamental system of open neighborhoods of $K$.
Clearly, 
$$ 
\mathcal{E}^\ast[K] \cong \varinjlim_{\Omega \in \mathcal{N}(K)} \mathcal{E}^{\ast}(\Omega)
$$
as locally convex spaces. Notice that $\mathcal{E}^{(M_p)}[K]$ is a $(LFN)$-space while $\mathcal{E}^{\{M_p\}}[K]$ is a $(DFN)$-space, as follows from \cite[Thm.\ 2.6]{Komatsu}. Moreover, since
\begin{equation}
\label{projgermseq}
\mathcal{E}^\ast(\Omega) \cong \varprojlim_{K \Subset \Omega} \mathcal{E}^\ast[K]
\end{equation}
as l.c.s.\ for $\Omega$ open,  and $\mathcal{E}^\ast(\Omega)$ is dense in each $\mathcal{E}^\ast[K]$, we have the isomorphism of vector spaces
$$
\mathcal{E}'^\ast(\Omega) \cong \varinjlim_{K \Subset \Omega} \mathcal{E}'^\ast[K].
$$
If $\ast=\{M_{p}\}$, the isomorphism is in fact topological because each $\mathcal{E}^\ast[K]$ is reflexive.

Let $f \in \mathcal{E}'^\ast(\Omega)$, where $\Omega$ is open. A compact set $K \Subset \Omega$ is said to be a $\ast$-\emph{carrier of} 
$f$ if $f \in \mathcal{E}'^\ast[K]$. It is well known that for every $f \in \mathcal{A}'(\Omega)$ 
there is a smallest compact set $K \Subset \Omega$ among the $\{p!\}$-carriers of $f$, called the \emph{support of} $f$ and denoted by $\operatorname{supp}_{\mathcal{A}'}f$. This 
essentially follows from the cohomology of the sheaf of germs of analytic functions (see e.g. \cite{Morimoto}). An elementary proof based on the properties of the Poisson 
transform of analytic functionals is provided in \cite[Sect.\ 9.1]{Hormander2}. See \cite{Matsuzawa} for a proof by means of the heat kernel method.  H\"ormander noticed that a 
similar result holds for quasianalytic functionals of Roumieu type \cite[Cor.\ 3.5]{Hormander}. More precisely, he showed that for every $f \in \mathcal{E}'^{\{M_p\}}(\Omega)$ there is a smallest compact set among the $\{M_p\}$-carriers of $f$ and that this set coincides with $\operatorname{supp}_{\mathcal{A}'}f$. The corresponding statement for the Beurling case was shown in \cite[Thm.\ 4.11]{Heinrich} \footnote{The authors work there with the notion of ultradifferentiability defined via weight functions as in \cite{Braun}, but their proofs can also be adapted to the present setting.}. For future reference, we collect these facts in the following proposition.

\begin{proposition}[\cite{Heinrich,Hormander}]\label{support} Let $\Omega \subseteq \R^d$ be open.
 For every $f \in \mathcal{E}'^{*}(\Omega)$ the set $\operatorname{supp}_{\mathcal{A}'} f$ is the smallest compact set of $\Omega$ among the $\ast$-carriers of $f$. 
 \end{proposition}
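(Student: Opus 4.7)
The plan is to prove the two inclusions separately: every $\ast$-carrier of $f$ contains $K_0 := \operatorname{supp}_{\mathcal{A}'}f$, and $K_0$ is itself a $\ast$-carrier of $f$.

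The first inclusion is essentially formal. Under $(NA)$ (resp.\ $(NE)$) the inclusion $\mathcal{A}(U) \hookrightarrow \mathcal{E}^\ast(U)$ is continuous for every open $U$, and it passes to germs, so $\mathcal{A}[K] \hookrightarrow \mathcal{E}^\ast[K]$ continuously for every compact $K$. Dualising gives $\mathcal{E}'^\ast[K] \hookrightarrow \mathcal{A}'[K]$, so any $\ast$-carrier $K$ of $f$ is in particular a $\{p!\}$-carrier, and therefore contains $K_0$ by the minimality already established for analytic functionals.

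For the opposite inclusion I would follow the Paley--Wiener strategy of Hörmander and Heinrich et al. Consider the Fourier--Laplace transform $\hat{f}(\zeta) = \langle f_x, e^{-ix\cdot\zeta}\rangle$, an entire function on $\C^d$ (well-defined since $(NA)$ or $(NE)$ places entire functions in $\mathcal{E}^\ast(\Omega)$). The key intermediate step is a Paley--Wiener characterisation of $\ast$-carriers: a compact $K \Subset \Omega$ is a $\ast$-carrier of $f$ if and only if
\[
|\hat{f}(\zeta)| \leq C\,\exp\!\bigl(H_K(\operatorname{Im}\zeta) + M(h|\zeta|)\bigr), \qquad \zeta \in \C^d,
\]
where $H_K(\xi) = \sup_{x \in K} x\cdot\xi$ is the support function of $K$, and the quantifier on $h > 0$ is ``for every'' in the Roumieu case and ``for some'' in the Beurling case. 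Necessity is obtained by inserting $e^{-ix\cdot\zeta}$ into the seminorms of the germ space $\mathcal{E}^\ast[K]$; sufficiency comes from an inverse Fourier integral together with a contour deformation that exploits the growth of $\hat{f}$ in imaginary directions, producing a continuous extension of $f$ to $\mathcal{E}^\ast(U)$ for every open $U \supset K$.

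The main obstacle is then to show that $\hat{f}$ admits a bound of the above form for every compact neighbourhood of $K_0$. This requires a directional decomposition $\hat{f} = \sum_{j=1}^N \hat{f}_j$ in which each summand is an entire function satisfying the Paley--Wiener bound with support function of a small compact piece of the given neighbourhood; the decomposition is carried out via Phragmén--Lindelöf on narrow cones and a Martineau/Hörmander-type indicator analysis, with the weight $M(h|\zeta|)$ preserved throughout. In the quasianalytic setting the splitting cannot be effected by cutoffs on the physical side, so it must be executed on the Fourier--Laplace side --- precisely the flavour of Cousin-type difficulty that the main results of the paper address. Once the decomposition is in hand, $f$ is $\ast$-carried in every open neighbourhood of $K_0$, and the inductive-limit description $\mathcal{E}^\ast[K_0] = \varinjlim_{U \supset K_0} \mathcal{E}^\ast(U)$ together with compatibility of the resulting carrier representations forces $f \in \mathcal{E}'^\ast[K_0]$, so $K_0$ is itself a $\ast$-carrier.
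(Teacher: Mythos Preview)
The paper does not prove this proposition: it is quoted from the literature, with the Roumieu case attributed to H\"ormander and the Beurling case to Heinrich--Meise, and is then used as a black box throughout. So there is no in-paper argument to compare against; you are attempting to reconstruct what the cited references do.

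Your first paragraph (every $\ast$-carrier contains $K_0 = \operatorname{supp}_{\mathcal{A}'}f$) is correct and is indeed the formal half.

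For the converse, your outline is in the right spirit but has a genuine gap. The Paley--Wiener characterisation you write down,
\[
|\hat f(\zeta)| \le C\exp\bigl(H_K(\operatorname{Im}\zeta) + M(h|\zeta|)\bigr),
\]
only sees the \emph{convex hull} of a carrier, since $H_K = H_{\operatorname{conv} K}$; the paper itself states the Fourier--Laplace isomorphism $\mathcal{E}'^\ast(\Omega)\cong\mathcal{O}^\ast_\Omega$ only for \emph{convex} $\Omega$. So the ``if and only if'' you claim is false for non-convex $K$: the estimate implies only that $\operatorname{conv}K$ is a $\ast$-carrier. Consequently your contour-shift argument, taken at face value, yields that every convex compact neighbourhood of $K_0$ --- hence $\operatorname{conv}K_0$ --- is a $\ast$-carrier, which is too weak when $K_0$ is not convex.

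The entire content of the support theorem is precisely the passage from convex carriers to arbitrary ones, and that is the ``directional decomposition $\hat f = \sum_j \hat f_j$'' you allude to but do not carry out. This step is not routine Phragm\'en--Lindel\"of or indicator analysis; it is the substantive analytic core of the cited proofs (H\"ormander's construction in the Roumieu case, the weighted estimates of Heinrich--Meise in the Beurling case). Without a concrete mechanism for producing the splitting on the Fourier side while preserving both the $M(h|\zeta|)$ growth and support-function control by pieces covering a non-convex neighbourhood of $K_0$, the proposal remains a plan rather than a proof.
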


It follows from \cite[Thm.\ 2.6]{Komatsu} that $\mathcal{E}^{(M_p)}(\Omega)$ is a $(FN)$-space, while $\mathcal{E}^{\{M_p\}}(\Omega)$ is a $(PLN)$-space, as follows from the projective representation \eqref{projgermseq}. In the next proposition we establish a topological property of $\mathcal{E}^{\{M_p\}}(\Omega)$ that shall be crucial for the rest of this work.

\begin{proposition}\label{topologytestspaces} 
Let $\Omega \subseteq \R^d$ be open. The space $\mathcal{E}^{\{M_p\}}(\Omega)$ is an ultrabornological $(PLN)$-space.
\end{proposition}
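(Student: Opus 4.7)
The plan is to apply the Vogt--Wengenroth characterization (Proposition~\ref{VogtandWeng}) to the natural projective spectrum representing $\mathcal{E}^{\{M_p\}}(\Omega)$. Fix a compact exhaustion $K_1 \Subset K_2 \Subset \cdots$ of $\Omega$ by regular compact sets. By \eqref{projgermseq} together with the density of $\mathcal{E}^{\{M_p\}}(\Omega)$ in each $\mathcal{E}^{\{M_p\}}[K_n]$ noted earlier in this section, $\mathcal{X} = (\mathcal{E}^{\{M_p\}}[K_n])_n$ is a reduced projective spectrum of $(DFN)$-spaces, hence of $(DFS)$-spaces, with $\mathcal{E}^{\{M_p\}}(\Omega) = \operatorname{Proj}^0 \mathcal{X}$. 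By Proposition~\ref{VogtandWeng}, ultrabornologicity of $\mathcal{E}^{\{M_p\}}(\Omega)$ is equivalent to $\alpha$-regularity of the dual inductive spectrum $\mathcal{X}^* = (\mathcal{E}'^{\{M_p\}}[K_n])_n$, which I now aim to verify.

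By the semi-reflexivity of the $(DFN)$-spaces $\mathcal{E}^{\{M_p\}}[K_n]$ recalled in Section~\ref{section: projective/inductive spectra}, the topological identification $\mathcal{E}'^{\{M_p\}}(\Omega) \cong \varinjlim_n \mathcal{E}'^{\{M_p\}}[K_n]$ holds, and Proposition~\ref{support} identifies the subspace $\mathcal{E}'^{\{M_p\}}[K_n]$ of $\mathcal{E}'^{\{M_p\}}(\Omega)$ with $\{f \in \mathcal{E}'^{\{M_p\}}(\Omega) : \operatorname{supp}_{\mathcal{A}'} f \subseteq K_n\}$. Thus $\alpha$-regularity of $\mathcal{X}^*$ reduces to the following claim: for every strongly bounded set $B \subseteq \mathcal{E}'^{\{M_p\}}(\Omega)$ there is some $n \in \N$ with $\bigcup_{f \in B} \operatorname{supp}_{\mathcal{A}'} f \subseteq K_n$.

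I would prove this by contradiction. If the claim fails, there exist $f_n \in B$ and $x_n \in \operatorname{supp}_{\mathcal{A}'} f_n \setminus K_n$; then the $x_n$ leave every compact subset of $\Omega$, and one can choose open $U_n \supset K_n$ with $\overline{U_n} \subseteq \operatorname{int} K_{n+1}$ and $x_n \notin \overline{U_n}$. By Proposition~\ref{support}, $\overline{U_n}$ is not a $\{M_p\}$-carrier of $f_n$, so $f_n$, viewed on the dense subspace $\mathcal{E}^{\{M_p\}}(\Omega)$, fails to extend continuously to $\mathcal{E}^{\{M_p\}}[\overline{U_n}] = \varinjlim_{V \supset \overline{U_n},\, h > 0} \mathcal{E}^{M_p,h}(\overline{V})$. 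This non-continuity supplies step parameters $(V_n, h_n)$ and, after rescaling, functions $\varphi_n \in \mathcal{E}^{\{M_p\}}(\Omega)$ with $\|\varphi_n\|_{\overline{V_n}, h_n} \to 0$ while $|\langle f_n, \varphi_n \rangle| \geq n$. If the parameters can be arranged so that $(\varphi_n)_n$ is bounded in $\mathcal{E}^{\{M_p\}}(\Omega)$, then $\sup_{n, f \in B} |\langle f, \varphi_n \rangle| = \infty$, contradicting the strong boundedness of $B$.

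The main obstacle is precisely to control $(V_n, h_n)$ so as to force $(\varphi_n)_n$ to be globally bounded: since quasianalytic functions admit no cut-offs, a small norm on $\overline{V_n}$ does not automatically imply control outside $V_n$. The idea is to choose $V_n$ exhausting $\Omega$ (for instance with $V_n \supseteq K_n$ growing with $n$) so that any fixed compact $L \Subset \Omega$ lies in $\overline{V_n}$ for $n$ large, and then to exploit the monotonicity $\|\cdot\|_{\overline{V}, h_2} \leq \|\cdot\|_{\overline{V}, h_1}$ for $h_1 \leq h_2$: by arranging $h_n \to 0$ and witnessing the non-continuity on steps $(V_n, h_n)$ with $V_n$ small enough to exclude $x_n$, for each fixed $h_L > 0$ and $n$ large one obtains $\|\varphi_n\|_{L, h_L} \leq \|\varphi_n\|_{\overline{V_n}, h_L} \leq \|\varphi_n\|_{\overline{V_n}, h_n} \to 0$, yielding boundedness of $(\varphi_n)_n$ in $\mathcal{E}^{\{M_p\}}(\Omega)$ and hence the required contradiction, which establishes $\alpha$-regularity and completes the proof.
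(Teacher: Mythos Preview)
Your setup is exactly that of the paper: represent $\mathcal{E}^{\{M_p\}}(\Omega)$ as the projective limit of the reduced spectrum $(\mathcal{E}^{\{M_p\}}[K_n])_n$ of $(DFN)$-spaces, and verify $\alpha$-regularity of the dual spectrum via Proposition~\ref{VogtandWeng}. The divergence is in how $\alpha$-regularity is established.

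There is a genuine gap in your contradiction argument, namely the claim that the step parameters $h_n$ can be arranged to tend to $0$. In the $(DFN)$-representation $\mathcal{E}^{\{M_p\}}[\overline{U_n}] = \varinjlim_{\overline{V}\searrow \overline{U_n},\,h\to\infty}\mathcal{E}^{M_p,h}(\overline{V})$ the inductive limit runs over $h\to\infty$, and continuity of a linear functional on an $(LB)$-space means continuity on \emph{every} step. Hence non-extension of $f_n$ only yields some threshold $h_0(n)$ such that $f_n$ is unbounded with respect to $\|\cdot\|_{\overline{V_n},h}$ for all $h\geq h_0(n)$; for $h<h_0(n)$ one has no information. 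Your inequality chain $\|\varphi_n\|_{L,h_L}\leq\|\varphi_n\|_{\overline{V_n},h_L}\leq\|\varphi_n\|_{\overline{V_n},h_n}$ needs $h_n\leq h_L$, i.e.\ a uniform upper bound on the $h_n$, but nothing in the construction prevents $h_0(n)\to\infty$. Without such a bound one cannot conclude that $(\varphi_n)_n$ is bounded in $\mathcal{E}^{\{M_p\}}(\Omega)$, and the contradiction does not close. In effect, controlling the $h_n$ uniformly in $n$ is tantamount to the regularity statement you are trying to prove.

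The paper circumvents this by importing the needed uniformity from the real-analytic case: Martineau's theorem asserts that $\mathcal{A}(\Omega)$ is already an ultrabornological $(PLN)$-space, so by Proposition~\ref{VogtandWeng} the spectrum $(\mathcal{A}'[K_n])_n$ is regular. Since the inclusion $\mathcal{E}'^{\{M_p\}}(\Omega)\hookrightarrow\mathcal{A}'(\Omega)$ is continuous, a bounded $B$ lands in some $\mathcal{A}'[K_n]$, and then H\"ormander's support theorem (Proposition~\ref{support}) upgrades this to $B\subset\mathcal{E}'^{\{M_p\}}[K_n]$. The missing idea in your proposal is precisely this reduction to Martineau's result; a direct elementary argument of the type you sketch does not seem to be available.
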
 
\begin{proof}
Let $(K_n)_{n \in \N}$ be an exhaustion by compact subsets of $\Omega$. The projective spectrum $\mathcal{X} = (\mathcal{E}^{\{M_p\}} [K_n])_{n \in \N}$ (with canonical linking mappings) consists of $(DFN)$-spaces and is reduced. Moreover, we have the following isomorphism of l.c.s.
$$
\mathcal{E}^{\{M_p\}}(\Omega) \cong \varprojlim_{n \in \N} \mathcal{E}^{\{M_p\}}[K_n],
$$
which gives that $\mathcal{E}^{\{M_p\}}(\Omega)$ is a $(PLN)$-space as already mentioned above. By Proposition \ref{VogtandWeng}, it suffices to show that $\mathcal{X^\ast}$ is $\alpha$-regular. We have 
$$
\mathcal{E}'^{\{M_p\}}(\Omega) \cong \varinjlim_{n \in \N} \mathcal{E}^{'\{M_p\}}[K_n]
$$
as locally convex spaces. Let $B \subset \mathcal{E}'^{\{M_p\}}(\Omega)$ be bounded. A classical result of Martineau shows that $\mathcal{A}(\Omega)$ is an ultrabornological $(PLN)$-space \cite[Thm.\ 1.2, Prop.\ 1.9]{Martineau2}. Since the inclusion mapping $\mathcal{E}'^{\{M_p\}}(\Omega) \rightarrow \mathcal{A}'(\Omega)$ is continuous, Proposition \ref{VogtandWeng} implies that $B \subset \mathcal{A}'[K_n]$ for some $n \in \N$. The result now follows from Proposition \ref{support}.
\end{proof}
\begin{remark}\label{rkCousin1}
For $\Omega$ convex, Proposition \ref{topologytestspaces} is due to R\"osner \cite{Rosner}. To the best of our knowledge, the result was not yet known for general $\Omega$ . 
\end{remark}
\section{Structure theorem for quasianalytic functionals}\label{sect-structure}
The purpose of this section is to generalize Komatsu's first structure theorem \cite[Thm.\ 8.1]{Komatsu} for non-quasianalytic ultradistributions to quasianalytic functionals. As an application, we shall give an explicit system of seminorms generating the topology of the space $\mathcal{E}^{\{M_p\}}(\Omega)$ (cf.\ \cite[Prop.\ 3.5]{Komatsu3}). The latter result is indispensable for the treatment of vector-valued quasianalytic functions of Roumieu type in the next section. The analysis of the Beurling case is similar to that given in \cite{Komatsu}, but we include details for the sake of completeness. The Roumieu case requires more elaborate arguments.
\begin{proposition}\label{structureBeurling} Let $\Omega \subseteq \R^d$ be open. For every bounded set $B$ in $\mathcal{E}'^{(M_p)}(\Omega)$ there are a compact set $K \Subset \Omega$ and measures $\mu_\alpha(f) \in C'(K)$, $\alpha \in \N^d$, $f \in B$, such that
$$
\sup_{f \in B} \sup_{\alpha \in \N ^d} \frac{\| \mu_\alpha(f)\|_{C'(K)} M_\alpha}{h^{|\alpha|}} < \infty
$$
for some $h > 0$ and 
$$
f = \sum_{\alpha \in \N^d} (\mu_\alpha(f))^{(\alpha)}, \qquad f \in B.
$$
\end{proposition}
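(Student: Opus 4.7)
The plan is to realize $B$ as a uniformly equicontinuous family on a weighted $c_0$-type sequence space of $C(K)$-valued families and then obtain the measures via Hahn-Banach. Since $\mathcal{E}^{(M_p)}(\Omega)$ is a Fr\'echet space, every bounded set $B \subset \mathcal{E}'^{(M_p)}(\Omega)$ is equicontinuous, so there exist a compact $K \Subset \Omega$ and constants $h_0, C > 0$ such that
\begin{equation*}
|\langle f, \varphi\rangle| \leq C \, \| \varphi\|_{K, h_0}, \qquad f \in B, \ \varphi \in \mathcal{E}^{(M_p)}(\Omega).
\end{equation*}

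Next, I would introduce the Banach space $Z$ of all sequences $(\psi_\alpha)_{\alpha \in \N^d}$ in $C(K)$ for which $\|\psi_\alpha\|_{C(K)}/(h_0^{|\alpha|} M_\alpha) \to 0$, with norm $\|(\psi_\alpha)\|_Z := \sup_\alpha \|\psi_\alpha\|_{C(K)}/(h_0^{|\alpha|} M_\alpha)$. This is a weighted $c_0$-sum of copies of $C(K)$, whose dual is the weighted $\ell^1$-sum of measures
\begin{equation*}
Z' = \Big\{ (\nu_\alpha)_\alpha : \nu_\alpha \in C'(K), \ \sum_\alpha \|\nu_\alpha\|_{C'(K)} h_0^{|\alpha|} M_\alpha < \infty \Big\}
\end{equation*}
with norm given by the latter sum. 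The derivation map $T\varphi := (\varphi^{(\alpha)}|_K)_{\alpha}$ sends $\mathcal{E}^{(M_p)}(\Omega)$ into $Z$: although $\varphi \in \mathcal{E}^{(M_p), h_0}(K)$ alone only gives the weighted $\ell^\infty$-bound, the Beurling definition ensures $\|\varphi\|_{K, h_1} < \infty$ for every $h_1 > 0$, and choosing $h_1 < h_0$ supplies the geometric decay $(h_1/h_0)^{|\alpha|}$ needed for the $c_0$-condition. Moreover $\|T\varphi\|_Z = \|\varphi\|_{K, h_0}$, so $T$ is isometric onto its image.

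With this setup, each $f \in B$ restricts to a linear functional on $T(\mathcal{E}^{(M_p)}(\Omega)) \subseteq Z$ of operator norm at most $C$. Hahn-Banach extends $f$ to $\tilde f \in Z'$ with $\|\tilde f\|_{Z'} \leq C$, and writing $\tilde f$ via coordinate measures $((-1)^{|\alpha|} \mu_\alpha(f))_\alpha$ yields the representation
\begin{equation*}
\langle f, \varphi\rangle = \sum_\alpha (-1)^{|\alpha|} \langle \mu_\alpha(f), \varphi^{(\alpha)}\rangle = \Big\langle \sum_\alpha (\mu_\alpha(f))^{(\alpha)}, \varphi\Big\rangle,
\end{equation*}
valid for every $\varphi \in \mathcal{E}^{(M_p)}(\Omega)$, together with the uniform bound $\sum_\alpha \|\mu_\alpha(f)\|_{C'(K)} h_0^{|\alpha|} M_\alpha \leq C$. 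In particular $\|\mu_\alpha(f)\|_{C'(K)} M_\alpha \leq C h^{|\alpha|}$ for every $\alpha$ with $h := 1/h_0$, uniformly in $f \in B$.

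The main technical subtlety is ensuring $T$ actually lands in the $c_0$-space $Z$ rather than only in the weighted $\ell^\infty$-space, since that is what makes $Z'$ a space of measures with summable norms; this is precisely where the Beurling-type projective structure $\mathcal{E}^{(M_p)}(\Omega) = \varprojlim_{K \Subset \Omega}\varprojlim_{h \to 0^+}\mathcal{E}^{(M_p), h}(K)$ is essential. A minor but harmless feature is that the Hahn-Banach extension is non-canonical, so the assignment $f \mapsto \mu_\alpha(f)$ need not be linear; this is permitted by the statement, which asserts only existence of such a family.
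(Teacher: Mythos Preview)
Your proof is correct and follows essentially the same strategy as the paper: use equicontinuity of $B$ to obtain $K$ and $h_0$, embed into a weighted $c_0$-type sequence space of $C(K)$-valued families, extend via Hahn--Banach, and read off the measures from the dual. The paper phrases the last step by passing to $C_0(U)$ for $U$ the disjoint union of $\N^d$ copies of $K$ and invoking the Riesz representation theorem, whereas you invoke the duality $(\text{weighted }c_0\text{-sum of }C(K))' = \text{weighted }\ell^1\text{-sum of }C'(K)$ directly; these are the same argument in different clothing.
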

\begin{proof} Let $(K_n)_{n \in \N}$ be an exhaustion by regular compact subsets of $\Omega$. We write $X_n$ for the space of all $\varphi \in C^\infty(K_n)$ such that 
$$
\sup_{x \in K_n}\frac{n^{|\alpha|}|\varphi^{(\alpha)}(x)|}{M_\alpha} \to 0, \qquad \mbox{as } |\alpha| \to \infty;
$$
endowed with the norm $\| \, \|_{K_n, 1/n}$ it becomes a Banach space. Clearly, $\mathcal{E}^{(M_p)}(\Omega) = \varprojlim X_n$ as locally convex spaces. Since $\mathcal{E}^{(M_p)}(\Omega)$ is a Fr\'echet space, the set $B$ is equicontinuous and, by the Hahn-Banach theorem, it can be extended to an equicontinuous set $\widetilde{B}$ in $X'_n$ for some $n \in \N$. Set $K = K_n$ and define $Y_n$ as the space of all tuples $(\varphi_\alpha)_\alpha \in C(K)^{\N^d}$ such that 
$$
\frac{n^{|\alpha|}\|\varphi_\alpha\|_{C(K)}}{M_\alpha} \to 0, \qquad \mbox{as } |\alpha| \to \infty;
$$
endowed with the norm 
$$
\sup_{\alpha \in \N^d}\frac{n^{|\alpha|}\|\varphi_\alpha\|_{C(K)}}{M_\alpha}
$$
it becomes a Banach space. The mapping $\iota_1: X_n \rightarrow Y_n: \varphi \rightarrow (\varphi^{(\alpha)})_\alpha$ is an injective linear topological homomorphism. Next, write $U$ for the disjoint union of $\N^d
$ copies of $K$. It becomes a locally compact space with the disjoint union topology. Notice that $C_0(U)$ can be topologically identified with the Banach space of all tuples $(\varphi_\alpha)_\alpha \in C(K)^{\N^d}$ such that 
$$
\|\varphi_\alpha\|_{C(K)}\to 0, \qquad \mbox{as } |\alpha| \to \infty,
$$
endowed with the norm 
$$
\sup_{\alpha \in \N^d} \|\varphi_\alpha\|_{C(K)}.
$$
Hence the mapping $\iota_2: Y_n \rightarrow C_0(U): (\varphi_\alpha)_\alpha \rightarrow (n^{|\alpha|}\varphi_\alpha/ M_\alpha)_\alpha$ is an injective linear topological homomorphism. We set $\iota = \iota_2 \circ \iota_1: X_n \to C_0(U)$ and write $\rho: \iota(X_n) \rightarrow X_n$ for the continuous linear mapping such that $\rho \circ \iota = \operatorname{id}$. The Hahn-Banach theorem implies that the set $\{ g \circ \rho \, : \, g \in \widetilde{B}\}$ can be extended to an equicontinuous subset of $C'_0(U)$. By the Riesz representation theorem there are Borel measures $\widetilde{\mu}_\alpha(g) \in C'(K)$, $\alpha \in \N^d$, $g \in \widetilde{B}$, such that
$$
\sup_{g \in \widetilde{B}}\sum_{\alpha \in \N^d} \|\widetilde{\mu}_\alpha(g)\|_{C'(K)} < \infty,
$$
and 
$$
g = \sum_{\alpha \in \N^d} \widetilde{\mu}_{\alpha}(g)\circ \iota, \qquad g \in \widetilde{B}.
$$
Denote by $\widetilde{f}$ the extension of $f \in B$ to $\widetilde{B}$. It is then clear that the measures $\mu_\alpha(f) = (-n)^{|\alpha|} \widetilde{\mu}_\alpha(\widetilde{f})/M_\alpha$ satisfy all requirements.
\end{proof}

Our strategy to deal with the structure of Roumieu quasianalytic functionals is to reduce the problem to the Beurling case. For it, we employ the Fourier-Laplace transform and a support splitting theorem due to H\"ormander \cite[Thm.\ 5.1]{Hormander}. We need some preparation. 

The supporting function \cite[Sect.\ 1.8]{Morimoto} of a convex compact set $K$ of $\R^d$ is defined as
$$
h_K(\xi) := \sup_{x \in K} \operatorname{Re} (\xi \cdot x), \qquad \xi \in \C^d.
$$
For $\lambda > 0$ we write $\mathcal{O}^{M_p, \lambda}_K$ for the Banach space of all entire functions $F \in \mathcal{O}(\C^d)$ such that 
$$
\sup_{\xi \in \C^d} |F(\xi)|e^{-h_K(\xi) - M(\xi/\lambda)} < \infty.
$$
Given a convex open set $\Omega$ in $\R^d$, we define
$$
\mathcal{O}^{(M_p)}_{\Omega} = \varinjlim_{K \Subset \Omega} \varinjlim_{\lambda \rightarrow 0^+}\mathcal{O}^{M_p,\lambda}_{K}, \qquad  \mathcal{O}^{\{M_p\}}_{\Omega} = \varinjlim_{K \Subset  \Omega} \varprojlim_{\lambda \rightarrow \infty}\mathcal{O}^{M_p,\lambda}_{K}.
$$
Let $f \in \mathcal{E}'^{\ast}(\Omega)$, its Fourier-Laplace transform is defined as 
$$
\mathcal{F}(f)(\xi) = \widehat{f}(\xi) = \langle f(x), e^{-i\xi \cdot x} \rangle, \qquad \xi \in \C^d.
$$
It is known that $\mathcal{F}: \mathcal{E}'^{\ast}(\Omega) \rightarrow \mathcal{O}^{\ast}_{\Omega}$ is a linear topological isomorphism (see e.g.\ \cite{Heinrich, Takiguchi}). 

Next, we discuss H\"ormander's splitting theorem. We give a short 
proof using Proposition \ref{support}.  Let $K_1$ and 
$K_2$ be compact sets in $\R^d$ with $K_1 \subseteq K_2$. We write $\iota_{K_1,K_2}: \mathcal{E}^\ast[K_2] \to \mathcal{E}^\ast[K_1]$ for the canonical restriction mapping. Its transpose is the canonical inclusion mapping $\mathcal{E}'^\ast[K_1] \to \mathcal{E}'^\ast[K_2]$. We shall identify $f \in  \mathcal{E}'^\ast[K_1]$ with its image under the  mapping $^t\iota_{K_1,K_2}$.
\begin{proposition}\label{splitting}
Let $K_1$ and $K_2$ be compact sets in $\R^d$. The sequence
\begin{center}
\begin{tikzpicture}
  \matrix (m) [matrix of math nodes, row sep=2em, column sep=2em]
    {0 & \mathcal{E}'^{\{M_p\}}[K_1 \cap K_2]& \mathcal{E}'^{\{M_p\}}[K_1] \times \mathcal{E}'^{\{M_p\}}[K_2]  & \mathcal{E}'^{\{M_p\}}[K_1 \cup K_2] & 0 \\
   };
  
  { [start chain] \chainin (m-1-1);
\chainin (m-1-2);
\chainin (m-1-3) [join={node[above,labeled] {S}}];
\chainin (m-1-4)[join={node[above,labeled] {T}}];
\chainin (m-1-5)[join={node[above,labeled] {}}];
}
  
\end{tikzpicture}
\end{center}
is topologically exact, where $S(f) = (f,f)$ and $T(f_1,f_2) = f_2-f_1$. Moreover, for every bounded set $B \subset \mathcal{E}'^{\{M_p\}}[K_1 \cup K_2]$ there are bounded sets $B_j \subset  \mathcal{E}'^{\{M_p\}}[K_j]$, $j = 1,2$, such that  
$T(B_1,B_2) = B$.
\end{proposition}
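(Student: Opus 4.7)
The plan is to establish three things in order: the algebraic exactness of the sequence, the surjectivity of $T$ (the crux), and the topological refinement together with the bounded lifting.

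\emph{Algebraic exactness.} Injectivity of $S$ follows from the density of $\iota_{K_1 \cap K_2, K_1}: \mathcal{E}^{\{M_p\}}[K_1] \to \mathcal{E}^{\{M_p\}}[K_1 \cap K_2]$, which is guaranteed because, under $(NE)$, entire functions are dense in every germ space $\mathcal{E}^{\{M_p\}}[K]$; the transpose of a dense continuous map is injective. The composition $T \circ S$ is trivially zero. For exactness at the middle, vanishing of $T(f_1, f_2)$ means that $f_1$ and $f_2$ coincide as a single element $f \in \mathcal{E}'^{\{M_p\}}[K_1 \cup K_2]$ for which both $K_1$ and $K_2$ are $\{M_p\}$-carriers; Proposition \ref{support} then places $\operatorname{supp}_{\mathcal{A}'} f$ inside $K_1 \cap K_2$ and identifies this support as itself a $\{M_p\}$-carrier, so $f \in \mathcal{E}'^{\{M_p\}}[K_1 \cap K_2]$ and $S(f) = (f_1, f_2)$.

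\emph{Surjectivity of $T$.} This is the hard step and is essentially Hörmander's splitting theorem \cite[Thm.\ 5.1]{Hormander}. Given $f \in \mathcal{E}'^{\{M_p\}}[K_1 \cup K_2]$, I would fix a convex open neighborhood $\Omega$ of the convex hull of $K_1 \cup K_2$ and pass via the Fourier--Laplace isomorphism $\mathcal{F}: \mathcal{E}'^{\{M_p\}}(\Omega) \to \mathcal{O}^{\{M_p\}}_\Omega$ to the entire function $\hat f$. Choosing convex open neighborhoods $L_j$ of $K_j$ with $L_1 \cup L_2$ a neighborhood of $K_1 \cup K_2$, the task becomes splitting $\hat f = \hat F_2 - \hat F_1$ into entire functions whose growths are bounded by $h_{L_j}$ plus an appropriate Roumieu weight. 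Such a decomposition is produced by Hörmander's $L^2$-method for the $\bar\partial$-equation with plurisubharmonic weights patched from $h_{L_1}$ and $h_{L_2}$. Pulling back yields $F_j \in \mathcal{E}'^{\{M_p\}}[L_j]$ with $F_2 - F_1 = f$; shrinking the $L_j$ to $K_j$ along a nested family and intersecting $\{M_p\}$-carriers via Proposition \ref{support} produces the desired $f_j \in \mathcal{E}'^{\{M_p\}}[K_j]$ with $f = f_2 - f_1$.

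\emph{Topological exactness and bounded lifting.} Each $\mathcal{E}'^{\{M_p\}}[K]$ is a nuclear Fréchet space (strong dual of a $(DFN)$-space), so all three spaces in the sequence are Fréchet. The open mapping theorem turns the continuous surjection $T$ into an open map; since $\ker T = \operatorname{im} S$ is closed, a second application shows $S$ to be a topological isomorphism onto its image. For the bounded lifting, one exploits the openness of $T$ together with a standard diagonal construction using the countable fundamental system of seminorms on the Fréchet product: every bounded $B \subseteq \mathcal{E}'^{\{M_p\}}[K_1 \cup K_2]$ is absorbed by each image seminorm ball, which allows one to select preimages controlled simultaneously in every seminorm, yielding bounded $B_1, B_2$ with $T(B_1, B_2) = B$. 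The main obstacle is undoubtedly the surjectivity of $T$; everything else is a routine application of the open mapping principle and Proposition \ref{support}.
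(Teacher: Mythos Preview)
Your outline is sound, but the paper's proof of surjectivity of $T$ is quite different from (and considerably shorter than) the route you sketch. Rather than invoking H\"ormander's $\bar\partial$-machinery through the Fourier--Laplace transform, the paper argues by duality: the transpose ${}^{t}T$ is the map $\mathcal{E}^{\{M_p\}}[K_1\cup K_2]\to \mathcal{E}^{\{M_p\}}[K_1]\times\mathcal{E}^{\{M_p\}}[K_2]$, $\varphi\mapsto(-\iota_{K_1,K_1\cup K_2}(\varphi),\iota_{K_2,K_1\cup K_2}(\varphi))$, which is visibly injective and whose range is closed because it coincides with the kernel of the continuous map $(\varphi_1,\varphi_2)\mapsto \iota_{K_1\cap K_2,K_1}(\varphi_1)+\iota_{K_1\cap K_2,K_2}(\varphi_2)$. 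The closed range theorem for Fr\'echet spaces then yields surjectivity of $T$ directly. Your approach is not wrong---you are essentially citing H\"ormander's original theorem---but the paper's point is precisely to give a soft functional-analytic proof that bypasses the hard analysis, relying only on Proposition~\ref{support}. Note also that your ``shrinking the $L_j$ to $K_j$ and intersecting carriers'' step is not a valid argument as written: the decompositions $f=F_2^{(n)}-F_1^{(n)}$ for different neighborhoods $L_j^{(n)}$ need not be compatible, so there is nothing to intersect; H\"ormander's actual proof handles this more carefully.

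There is also a genuine gap in your bounded lifting argument. Openness of $T$ between Fr\'echet spaces does \emph{not} by itself guarantee that bounded sets lift; a ``diagonal construction using the countable fundamental system of seminorms'' gives, for each seminorm $p_n$ on the target, a preimage controlled in some seminorm on the source, but these preimages differ with $n$ and cannot be combined without further input. The correct statement (which the paper invokes as \cite[Lemma~26.13]{Meise}) is that lifting of bounded sets holds for a short exact sequence $0\to X\to Y\to Z\to 0$ of Fr\'echet spaces provided the kernel $X$ is an $(FS)$-space. Here $X=\mathcal{E}'^{\{M_p\}}[K_1\cap K_2]$ is indeed $(FN)$, so the hypothesis is met, but this is the fact you need to cite rather than the open mapping theorem alone.
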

\begin{proof}
In view of the open mapping theorem, it suffices to show that the sequence is algebraically exact. The injectivity of $S$ is clear while the equality $\operatorname{Im} S = \operatorname{ker } T$ follows from Proposition \ref{support}. Notice that the transpose of $T$ may be identified with the mapping
$$
\mathcal{E}^{\{M_p\}}[K_1 \cup K_2] \rightarrow \mathcal{E}^{\{M_p\}}[K_1] \times \mathcal{E}^{\{M_p\}}[K_2]: \varphi \rightarrow (-\iota_{K_1,K_1 \cup K_2}(\varphi),\iota_{K_2,K_1 \cup K_2}(\varphi)).
$$
The mapping $T$ is surjective since the above mapping is injective and has closed range. Indeed, the injectivity is clear while the closed range property follows from the fact that the range is equal to the kernel of the  continuous mapping
$$
\mathcal{E}^{\{M_p\}}[K_1] \times \mathcal{E}^{\{M_p\}}[K_2] \rightarrow \mathcal{E}^{\{M_p\}}[K_1 \cap K_2]: (\varphi_1, \varphi_2) \rightarrow \iota_{K_1 \cap K_2, K_1 }(\varphi_1) + \iota_{K_1 \cap K_2, K_2}(\varphi_2).
$$
The second part follows from the general fact that for an exact sequence of Fr\'echet spaces 
$$
0 \longrightarrow X \longrightarrow Y \xrightarrow{\phantom,T\phantom,} Z \longrightarrow 0
$$
with $X$ an $(FS)$-space it holds that for every bounded set $B \subset Z$ there is a bounded set $A \subset Y$ such that $T(A) = B$ \cite[Lemma 26.13]{Meise}.
\end{proof}
We write $\mathcal{R}$ for the family of positive real sequences $(r_j)_{j \in \N}$ with $r_0 =1$ which increase (not necessarily strictly) to infinity. This set is partially ordered and directed by the relation $r_j \preceq s_j$, which means that there is a $j_0 \in \N$ such that $r_j \leq s_j$ for all $j \geq j_0$. Let $M_p$ be a weight sequence with associated function $M$ and let $r_j \in \mathcal{R}$. We denote by $M_{r_j}$ the associated function of the sequence $M_p\prod_{j = 0}^pr_j$. We need three technical lemmas.
\begin{lemma}\label{bettersequence} Let $M_p$ be a weight sequence satisfying $(M.1)$, $(M.2)'$, and $(QA)$. Then, for every $r_j \in \mathcal{R}$ there is $r'_j \in \mathcal{R}$ with $r'_j \leq r_j$, $j \in \N$, such that the sequence $M_p\prod_{j = 0}^pr'_j$ also satisfies  $(M.1)$, $(M.2)'$, and $(QA)$.
\end{lemma}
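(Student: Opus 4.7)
The plan is to build $r'_j$ as the pointwise minimum of $r_j$ with a carefully chosen auxiliary sequence $s_j \in \mathcal{R}$ that grows slowly enough to preserve $(QA)$ for $N_p := M_p\prod_{j=0}^p r'_j$, yet still tends to infinity. Concretely, using $(QA)$ for $M_p$, I first select indices $0 = q_0 < q_1 < q_2 < \cdots$ with
\[
\sum_{p = q_k+1}^{q_{k+1}} \frac{1}{m_p} \geq 1, \qquad k \geq 0,
\]
and then define $s_p := k$ for $p \in (q_k, q_{k+1}]$ (and $s_0 := 1$). Since $q_k \geq k$, this ensures $s_p \leq p$ for $p \geq 1$, while clearly $s_j \in \mathcal{R}$.

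Next, I set $r'_j := \min(r_j, s_j)$. Since both $r_j$ and $s_j$ are non-decreasing with value $1$ at $j = 0$ and diverge to infinity, the same properties are inherited by $r'_j$; moreover $r'_j \leq r_j$ by construction. It then remains to check the three conditions for $N_p$. First, the ratio $n_p := N_p/N_{p-1} = m_p r'_p$ is non-decreasing as a product of two non-decreasing factors, which is $(M.1)$. For $(M.2)'$, I use $r'_{p+1} \leq s_{p+1} \leq p+1 \leq 2^{p+1}$ together with $(M.2)'$ for $M_p$ to estimate
\[
n_{p+1} = m_{p+1} r'_{p+1} \leq A H^{p+1}(p+1) \leq A (2H)^{p+1}.
\]
For $(QA)$ I split the sum over the blocks $(q_k, q_{k+1}]$, on which $r'_p \leq s_p = k$, to obtain
\[
\sum_{p=1}^\infty \frac{1}{m_p r'_p} \geq \sum_{k=1}^\infty \frac{1}{k} \sum_{p=q_k+1}^{q_{k+1}} \frac{1}{m_p} \geq \sum_{k=1}^\infty \frac{1}{k} = \infty.
\]

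The main obstacle is the tension between $(QA)$ and the requirement $r'_p \to \infty$: enlarging $r'_p$ too rapidly forces $\sum 1/(m_p r'_p)$ to converge, whereas keeping $r'_p$ bounded violates $r'_j \in \mathcal{R}$. The block construction resolves this by letting $r'_p$ grow only as fast as the ``count'' $k$ of successive segments of the series $\sum 1/m_p$ each having mass at least $1$; this is essentially the fastest growth still compatible with $(QA)$, and it is automatically at most linear in $p$, keeping us safely within $(M.2)'$. Condition $(M.1)$ comes for free from the monotonicity already built into $m_p$ and $r'_p$, so no additional work is needed there.
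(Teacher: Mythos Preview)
Your argument is sound in spirit but contains an indexing slip: with $s_p := k$ on $(q_k,q_{k+1}]$ you get $s_p = 0$ for all $1 \le p \le q_1$, so $s$ drops below $s_0 = 1$ and is not positive there; consequently $r'_p = \min(r_p,0) = 0$ on that block and $N_p$ vanishes. The fix is trivial---take $s_p := k+1$ on $(q_k,q_{k+1}]$. Then $s \in \mathcal{R}$, the bound $s_p \le p$ still holds (since $p > q_k \ge k$ gives $p \ge k+1$), and the $(QA)$ estimate becomes $\sum_{k\ge 0} 1/(k+1) = \infty$. With this correction all three verifications go through exactly as you wrote.

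The paper follows a different but equally short route: it sets
\[
r'_j \;=\; \min\!\bigl(r_j,\,2^j,\,k_j\bigr),
\qquad k_j \;=\; 1 + \Bigl(\textstyle\sum_{p=1}^{j} 1/m_p\Bigr)^{1/2}.
\]
Here the factor $2^j$ takes care of $(M.2)'$ separately, while $(QA)$ for $N_p$ follows from the Abel--Dini type fact that $\sum_p (S_p - S_{p-1})/\sqrt{S_p}$ diverges whenever $S_p := \sum_{i\le p} 1/m_i \to \infty$. Your block construction is a discrete substitute for this square-root trick: it makes the divergence check a one-line harmonic-series comparison and, as a bonus, yields $s_p \le p$ automatically, so you do not need a third term in the minimum. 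The paper's choice, in turn, avoids having to select the cut points $q_k$. Either approach is perfectly adequate here.
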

\begin{proof}
Set $k_0 = 1$ and
$$
k_j = 1+ \left( \sum_{p = 1}^j \frac{1}{m_{p}} \right)^{1/2}, \qquad j \geq 1.
$$
The sequence $r'_j \in \mathcal{R}$ with $r'_0 = 1$ and 
$$
r'_j = \min( r_j, 2^j, k_j), \qquad j \geq 1,
$$
satisfies all requirements.
\end{proof}
\begin{lemma}\cite[Lemma 4.5]{DVV}\label{projective}
Let $M_p$ be a weight sequence satisfying $(M.1)$ and $(M.2)'$ and let $g: [0,\infty) \rightarrow [0,\infty)$. Then, $g(t) = O(e^{M(t/\lambda)})$ for all $\lambda > 0$ if and only if $g(t) = O(e^{M_{r_j}(t)})$ for some $r_j \in \mathcal{R}$. 
\end{lemma}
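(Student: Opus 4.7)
The implication $(\Leftarrow)$ is a short estimate. Given $(r_j) \in \mathcal{R}$, for any $\lambda > 0$ choose $J$ with $r_j \geq \lambda$ for $j \geq J$. A direct calculation yields $R_p := \prod_{j=0}^p r_j \geq c_\lambda \lambda^p$ for all $p \in \N$ (with $c_\lambda > 0$ depending on $\lambda$, $J$ and $R_{J-1}$), whence $e^{M_{r_j}(t)} = \sup_p t^p/(M_p R_p) \leq c_\lambda^{-1} e^{M(t/\lambda)}$. Hence $g = O(e^{M_{r_j}})$ implies $g = O(e^{M(\cdot/\lambda)})$ for each $\lambda > 0$.

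The nontrivial direction is $(\Rightarrow)$. Pick a nondecreasing sequence $(C_n)_{n \in \N}$ with $g(t) \leq C_n e^{M(t/n)}$ for all $t \geq 0$, $n \in \N$. By enlarging $(C_n)$ we may assume it grows faster than any polynomial, and by passing to a log-convex nondecreasing majorant we obtain a function $C:(0,\infty)\to (0,\infty)$ of super-polynomial growth such that $g(t) \leq C(\lambda) e^{M(t/\lambda)}$ for every $\lambda > 0$. Set
\[
b_p := \sup_{\lambda > 0} \frac{\lambda^p}{C(\lambda)}, \qquad p \in \N,
\]
which is finite by the growth of $C$. Using the maximiser $\lambda^*(p)$ one checks that $(b_p)$ is log-convex, and a contradiction argument (a bound $b_p/b_{p-1} \leq N$ would force $b_p \leq N^p b_0$, but $b_p \geq (2N)^p/C(2N)$ grows faster in $p$) shows $b_p/b_{p-1} \to \infty$. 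Setting $r_0 := 1$ and $r_j := \max(1, b_j/b_{j-1})$ for $j \geq 1$ therefore defines an element of $\mathcal{R}$ with $R_p := \prod_{j=0}^p r_j \asymp b_p$.

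The core of the argument is the identity
\[
M_b(t) := \sup_p \log \frac{t^p}{M_p b_p} \;=\; \inf_{\lambda > 0}\left[\log C(\lambda) + M(t/\lambda)\right], \qquad t > 0,
\]
which is Fenchel--Young at a critical pair. Indeed, $\log b_p$ is by construction the Legendre conjugate of $u \mapsto \log C(e^u)$ (convex by log-convexity of $C$), and $\log M_p$ is convex in $p$ by $(M.1)$. The first-order conditions for the maximiser $p^*$ on the left and the minimiser $\lambda^*$ on the right combine to give $(\log M_{p^*})' = \log(t/\lambda^*)$, so the same $p^*$ attains $M(t/\lambda^*)$, and hence $M_b(t) = \log C(\lambda^*) + M(t/\lambda^*)$. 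The hypothesis at $\lambda = \lambda^*$ then yields $\log g(t) \leq M_b(t)$, and since $M_{r_j}(t) = M_b(t) + O(1)$, we conclude $g = O(e^{M_{r_j}})$.

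\textbf{Main obstacle.} Making the Fenchel--Young identification rigorous despite $p$ being a discrete variable. I would either extend $p \mapsto \log M_p$ by linear interpolation to a convex function on $[0,\infty)$ (an operation that only affects constants) and apply classical convex duality on $\R_+^2$, or appeal to Sion's minimax theorem for $f(p,\lambda) = p\log(t/\lambda) - \log M_p + \log C(\lambda)$, which is concave in $p$ by $(M.1)$ and convex in $\lambda$ by log-convexity of $C$, yielding the required identity at once.
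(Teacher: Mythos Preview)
The paper does not prove this lemma itself; it is quoted from \cite[Lemma~4.5]{DVV}. So there is no ``paper's own proof'' to match, and your approach should be judged on its merits.

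Your route through Fenchel--Young duality is different from the standard argument and is conceptually sound, but the part you flag as the ``main obstacle'' is a genuine gap as written. Two issues:

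\emph{(a) Compactness for Sion.} Sion's theorem needs one of the two convex domains to be compact; here both $p\in[0,\infty)$ and $\lambda\in(0,\infty)$ are unbounded. This is fixable by a coercivity argument: for fixed $t$, $\sup_p f(p,\lambda)\to\infty$ as $\lambda\to 0^+$ or $\lambda\to\infty$ (using $m_p\to\infty$ and the super-polynomial growth of $C$), and $\inf_\lambda f(p,\lambda)\to-\infty$ as $p\to\infty$; so both extrema are attained on a compact rectangle and Sion applies there. You should say this explicitly.

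\emph{(b) Discrete versus continuous $p$.} Your claimed identity $M_b(t)=\inf_\lambda[\log C(\lambda)+M(t/\lambda)]$ is, after the minimax, really a statement about $\sup_{p\in\R_+}$ with $\log b_p$ extended as the Legendre transform of $u\mapsto\log C(e^u)$. What you actually need is a bound on $\sup_{p\in\N}[p\log t-\log M_p-\log b_p]$. Linear interpolation of $\log M_p$ alone does not close this: the Legendre-transform extension of $\log b_p$ is in general strictly below its piecewise-linear interpolant, so the concave function $h(p)=p\log t-\log M_p^{\mathrm{LI}}-\log b_p^{\mathrm{LT}}$ can peak at a non-integer. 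You must show the resulting loss is $O(1)$ uniformly in $t$. One clean way: choose the majorant $C$ so that $u\mapsto\log C(e^u)$ is piecewise linear with \emph{integer} slopes; then its conjugate $\log b_p$ is itself piecewise linear with breakpoints at the integers, the two extensions coincide, and the supremum over $\R_+$ equals the supremum over $\N$. Alternatively, bound $(\log b_p)''$ and use a second-order estimate on $[k,k+1]$. Either way, ``only affects constants'' needs an argument.

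By contrast, the proof in \cite{DVV} (and the closely related arguments going back to Komatsu) avoids duality altogether: one chooses a rapidly increasing sequence $0=p_1<p_2<\cdots$ of integers, sets $r_j$ constant on each block $(p_{n-1},p_n]$, and picks the block lengths so large (depending on $C_n$ and on $(M.2)'$) that on a suitable $t$-range the $n$-th hypothesis $g(t)\le C_n e^{M(t/n)}$ already matches a single term $t^{p}/(M_p R_p)$. That argument is more pedestrian but entirely elementary; yours is more conceptual and, interestingly, does not seem to need $(M.2)'$ at all once the two points above are handled.
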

\begin{lemma}\label{reduction}
Let $\Omega \subseteq \R^d$ be open. For every bounded set $B$ in $\mathcal{E}'^{\{M_p\}}(\Omega)$ there is a weight sequence $N_p$ with $M_p \prec N_p$ satisfying $(M.1)$, $(M.2)'$, and $(QA)$ such that $B$ is contained and bounded in $\mathcal{E}'^{(N_p)}(\Omega)$. 
\end{lemma}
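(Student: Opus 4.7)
The plan is to localize $B$ to a finite family of convex compact sets inside $\Omega$, apply the Fourier--Laplace transform on each piece, and use Lemmas~\ref{projective} and \ref{bettersequence} to upgrade the Roumieu ``for every $\lambda$'' growth bound into a single Beurling ``for some $\lambda$'' bound with respect to a larger weight sequence $N_p$.

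\textit{Step 1 (Localization and splitting).} Propositions~\ref{topologytestspaces} and \ref{VogtandWeng} imply that the dual spectrum of $(\mathcal{E}^{\{M_p\}}[K_n])_{n}$ is regular; hence there is a compact $K \Subset \Omega$ such that $B$ is contained and bounded in $\mathcal{E}'^{\{M_p\}}[K]$. Cover $K$ by finitely many closed balls $\widetilde{K}_1,\dots,\widetilde{K}_m \Subset \Omega$, each contained in a convex open set $\Omega_i$ with $\widetilde{K}_i \Subset \Omega_i \Subset \Omega$. Iterating Proposition~\ref{splitting} yields, for each $f \in B$, a decomposition $f = \sum_{i=1}^m \varepsilon_i f_i$ (with $\varepsilon_i \in \{\pm 1\}$) such that $f_i \in \mathcal{E}'^{\{M_p\}}[\widetilde{K}_i]$ and the family $B_i := \{f_i : f \in B\}$ is bounded in $\mathcal{E}'^{\{M_p\}}[\widetilde{K}_i]$.

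\textit{Step 2 (Fourier--Laplace and extraction of a sequence).} Fix $i$. The Fourier--Laplace isomorphism $\mathcal{F}\colon \mathcal{E}'^{\{M_p\}}(\Omega_i) \to \mathcal{O}^{\{M_p\}}_{\Omega_i}$, combined with the Paley--Wiener--Komatsu characterization of functionals carried by the convex compact $\widetilde{K}_i$, translates the boundedness of $B_i$ into
$$
\sup_{f \in B_i}\sup_{\xi \in \C^d} |\widehat f(\xi)|\, e^{-h_{\widetilde{K}_i}(\xi) - M(\xi/\lambda)} < \infty \qquad \text{for every } \lambda > 0.
$$
Applying Lemma~\ref{projective} to the envelope $g_i(t) := \sup_{f \in B_i,\, |\xi| \le t} |\widehat f(\xi)|\, e^{-h_{\widetilde{K}_i}(\xi)}$ produces $r^{(i)}_j \in \mathcal{R}$ with $g_i(t) = O(e^{M_{r^{(i)}_j}(t)})$. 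Set $r_j := \min_{1 \le i \le m} r^{(i)}_j \in \mathcal{R}$ and invoke Lemma~\ref{bettersequence} to obtain $r'_j \in \mathcal{R}$ with $r'_j \le r_j$ such that $N_p := M_p \prod_{k=0}^p r'_k$ satisfies $(M.1)$, $(M.2)'$, and $(QA)$. Since $r'_j \to \infty$, the relation $M_p \prec N_p$ is immediate.

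\textit{Step 3 (Inversion and conclusion).} Because $r'_j \le r^{(i)}_j$ for every $i$, the associated function $N$ of $N_p$ dominates each $M_{r^{(i)}_j}$; hence $\widehat B_i$ is bounded in the Banach space $\mathcal{O}^{N_p,1}_{\widetilde{K}_i}$. Through the canonical inclusion $\mathcal{O}^{N_p,1}_{\widetilde{K}_i} \hookrightarrow \mathcal{O}^{(N_p)}_{\Omega_i}$ and the inverse Fourier--Laplace transform for the Beurling class (valid because $N_p$ satisfies the standing assumptions, noting that $(NA)$ for $N_p$ follows from $(NE)$ for $M_p$ and $M_p \prec N_p$), we conclude that $B_i$ is bounded in $\mathcal{E}'^{(N_p)}(\Omega_i)$. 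Transposing the restriction $\mathcal{E}^{(N_p)}(\Omega) \to \mathcal{E}^{(N_p)}(\Omega_i)$ sends $B_i$ to a bounded set in $\mathcal{E}'^{(N_p)}(\Omega)$, and the identity $f = \sum_i \varepsilon_i f_i$ gives the boundedness of $B$ in $\mathcal{E}'^{(N_p)}(\Omega)$. The main obstacle is the passage from the Roumieu ``for every $\lambda$'' estimate to a Beurling ``for some $\lambda$'' estimate while retaining $(M.1)$, $(M.2)'$, and $(QA)$---precisely the content of Lemmas~\ref{projective} and \ref{bettersequence}. The non-convexity of $K$ is handled by the covering and the iterative use of Proposition~\ref{splitting}.
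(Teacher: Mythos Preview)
Your proof is correct and follows essentially the same approach as the paper's: localize via Propositions~\ref{VogtandWeng} and \ref{topologytestspaces}, split into convex pieces using Proposition~\ref{splitting}, apply the Fourier--Laplace transform together with Lemmas~\ref{projective} and \ref{bettersequence} on each piece, and pass back. You are somewhat more explicit than the paper in spelling out the choice of ambient convex open sets $\Omega_i$, the taking of $r_j=\min_i r_j^{(i)}$ before invoking Lemma~\ref{bettersequence}, and the verification of $(NA)$ for $N_p$, but the structure of the argument is identical.
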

\begin{proof} STEP $1$: $\Omega$ is convex. From the above remarks on the Fourier-Laplace transform it follows that there is a convex compact set $K \subseteq \Omega$ such that 
$$
\sup_{f \in B}\sup_{\xi \in \C^d} |\widehat{f}(\xi)|e^{-h_K(\xi) - M(\xi/\lambda)} < \infty,
$$ 
for all $\lambda > 0$. Applying Lemma \ref{projective} to the function 
$$
g(t) = \sup_{f \in B}\sup_{|\xi| = t} |\widehat{f}(\xi)|e^{-h_K(\xi)}, \qquad t \geq  0,
$$
we find a sequence $r_j \in \mathcal{R}$ such that
$$
\sup_{f \in B}\sup_{\xi \in \C^d} |\widehat{f}(\xi)|e^{-h_K(\xi) - M_{r_j}(\xi)} < \infty.
$$
By Lemma \ref{bettersequence}, we may assume without loss of generality that the sequence $N_p :=M_p\prod_{j = 0}^pr_j$ satisfies $(M.1)$, $(M.2)'$, and $(QA)$. The result now follows from the fact that  $\mathcal{F}: \mathcal{E}'^{(N_p)}(\Omega) \rightarrow \mathcal{O}^{(N_p)}_{\Omega}$ is a topological isomorphism.

STEP $2$: $\Omega$ is arbitrary. By Propositions \ref{VogtandWeng} and \ref{topologytestspaces} there is $K \Subset \Omega$ such that $B$ is contained and bounded in $\mathcal{E}'^{\{M_p\}}[K]$. Let $K_1, \ldots, K_N$, $N \in \N$, be convex compact sets in $\Omega$ such that $K \subseteq \bigcup_{j = 1}^N K_j$. Using Proposition \ref{splitting} and a simple induction argument, we find bounded sets $B_j \subset \mathcal{E}'^{\{M_p\}}[K_j]$, $j = 1, \ldots N$, such that $B = B_1 + \cdots + B_N$.  The result now follows from the first step and Lemma \ref{bettersequence}.
\end{proof}
\begin{remark}
The technique of reducing the case of arbitrary open sets to open convex sets as in Lemma \ref{reduction} is due to Heinrich and Meise \cite{Heinrich}.
\end{remark}
Propositions \ref{structureBeurling} and Lemma \ref{reduction} immediately yield the analogue of Komatsu's first structure theorem in the Roumieu case.
\begin{proposition}\label{structureRoumieu} Let $\Omega \subseteq \R^d$ be open. For every bounded set $B$ in $\mathcal{E}'^{\{M_p\}}(\Omega)$ there are a compact set $K \Subset \Omega$ and measures $\mu_\alpha(f) \in C'(K)$, $\alpha \in \N^d$, $f \in B$, such that
$$
\sup_{f \in B} \sup_{\alpha \in \N ^d} \frac{\| \mu_\alpha(f)\|_{C'(K)} M_\alpha}{h^{|\alpha|}} < \infty
$$
for all $h > 0$ and 
$$
f = \sum_{\alpha \in \N^d} (\mu_\alpha(f))^{(\alpha)}, \qquad f \in B.
$$
\end{proposition}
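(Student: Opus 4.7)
The plan is to exploit the hint given immediately before the statement and combine Proposition \ref{structureBeurling} with Lemma \ref{reduction} to reduce the Roumieu case to the Beurling case on a strictly larger weight sequence. Starting from a bounded set $B \subset \mathcal{E}'^{\{M_p\}}(\Omega)$, I will first invoke Lemma \ref{reduction} to obtain a weight sequence $N_p$ satisfying $(M.1)$, $(M.2)'$, and $(QA)$ with $M_p \prec N_p$, such that $B$ sits as a bounded subset of $\mathcal{E}'^{(N_p)}(\Omega)$.

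Next, I will apply Proposition \ref{structureBeurling} to $B$ viewed as a bounded subset of the Beurling space $\mathcal{E}'^{(N_p)}(\Omega)$. This produces a compact $K \Subset \Omega$, a constant $h_0 > 0$, and measures $\mu_\alpha(f) \in C'(K)$ satisfying
$$
\sup_{f \in B}\sup_{\alpha \in \N^d} \frac{\|\mu_\alpha(f)\|_{C'(K)} N_\alpha}{h_0^{|\alpha|}} < \infty,
$$
together with the representation $f = \sum_\alpha (\mu_\alpha(f))^{(\alpha)}$ valid in $\mathcal{E}'^{(N_p)}(\Omega)$.

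The remaining task is to convert the Beurling bound involving $N_\alpha$ and a single $h_0$ into the Roumieu bound involving $M_\alpha$ valid for every $h > 0$. Given an arbitrary $h > 0$, the relation $M_p \prec N_p$ furnishes a constant $C_h > 0$ with $M_\alpha \leq C_h (h/h_0)^{|\alpha|} N_\alpha$ for all $\alpha \in \N^d$; multiplying the Beurling estimate by $M_\alpha/N_\alpha$ and using this inequality yields
$$
\sup_{f \in B}\sup_{\alpha \in \N^d} \frac{\|\mu_\alpha(f)\|_{C'(K)} M_\alpha}{h^{|\alpha|}} < \infty,
$$
as required. The identity $f = \sum_\alpha (\mu_\alpha(f))^{(\alpha)}$, already established in $\mathcal{E}'^{(N_p)}(\Omega)$, transfers via the continuous inclusion $\mathcal{E}'^{(N_p)}(\Omega) \hookrightarrow \mathcal{E}'^{\{M_p\}}(\Omega)$ (dual to $\mathcal{E}^{\{M_p\}}(\Omega) \subset \mathcal{E}^{(N_p)}(\Omega)$), and the upgraded bound on $\|\mu_\alpha(f)\|_{C'(K)}$ guarantees that the series converges absolutely when tested against any $\varphi \in \mathcal{E}^{\{M_p\}}(\Omega)$.

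There is essentially no obstacle here; the genuine content has been absorbed into Proposition \ref{structureBeurling} and Lemma \ref{reduction}, the latter being where the Fourier--Laplace transform, H\"ormander's splitting, and Lemmas \ref{bettersequence}--\ref{projective} do the real work. The only subtle point to keep straight is that one must use the strict relation $M_p \prec N_p$ rather than merely $M_p \subset N_p$, since this is precisely what allows a single $h_0$-Beurling bound on the measures $\mu_\alpha(f)$ to be promoted to a bound uniform in every $h > 0$, which is the defining feature of the Roumieu growth condition.
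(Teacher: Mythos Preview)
Your proposal is correct and follows precisely the route indicated by the paper, which states that the result is an immediate consequence of Proposition~\ref{structureBeurling} and Lemma~\ref{reduction}. You have simply made explicit the straightforward verification that the strict relation $M_p \prec N_p$ converts the single-$h_0$ Beurling bound into the uniform-in-$h$ Roumieu bound, and that the representation transfers along the inclusion $\mathcal{E}'^{(N_p)}(\Omega) \hookrightarrow \mathcal{E}'^{\{M_p\}}(\Omega)$.
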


\begin{proposition}\label{Roumieuseminorms} Let $\Omega \subseteq \R^d$ be open. A function $\varphi \in C^\infty(\Omega)$ belongs to $\mathcal{E}^{\{M_p\}}(\Omega)$ if and only if 
$$
\| \varphi \|_{K, r_j} := \sup_{\alpha \in \N^d}\sup_{x \in K} \frac{|\varphi^{(\alpha)}(x)|}{M_{\alpha}\prod_{j = 0}^{|\alpha|}r_j} < \infty
$$
for all $K \Subset \Omega$ and $r_j \in \mathcal{R}$. Moreover, the topology of $\mathcal{E}^{\{M_p\}}(\Omega)$ is generated by the system of seminorms $\| \, \|_{K, r_j}$.
\end{proposition}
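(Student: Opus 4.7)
The plan is to establish the characterization in two steps: first the pointwise equivalence of the two bounds, then the equality of the two topologies.

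For the pointwise equivalence, if $\varphi \in \mathcal{E}^{M_p,h}(K)$ for some $h > 0$, then since $r_j \to \infty$ one has $\sup_p h^p/\prod_{j=0}^p r_j < \infty$, whence $\|\varphi\|_{K,r_j} \leq C(h,r_j)\,\|\varphi\|_{K,h} < \infty$. Conversely, assume $\|\varphi\|_{K,h} = \infty$ for every $h > 0$ and set $c_p := \sup_{|\alpha|=p,\,x\in K} |\varphi^{(\alpha)}(x)|/M_\alpha$; then $\limsup_p c_p^{1/p} = \infty$, so one can select $p_k \uparrow \infty$ with $c_{p_k} \geq (2k)^{p_k}$. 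Taking $r_0 = 1$ and $r_j = k$ for $p_{k-1} < j \leq p_k$ (so that $r_j \in \mathcal{R}$), the estimate $\prod_{j=0}^{p_k} r_j \leq k^{p_k+1}$ yields $c_{p_k}/\prod_{j=0}^{p_k} r_j \geq 2^{p_k}/k \to \infty$, contradicting $\|\varphi\|_{K,r_j} < \infty$.

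Each seminorm $\|\cdot\|_{K,r_j}$ is continuous on $\mathcal{E}^{\{M_p\}}(\Omega)$: the Banach-step bound $\|\varphi\|_{K,r_j} \leq C(h,r_j)\,\|\varphi\|_{K,h}$ yields continuity on every $\mathcal{E}^{M_p,h}(K)$, hence on the inductive limit $\varinjlim_h \mathcal{E}^{M_p,h}(K)$ by the universal property, and therefore on the projective limit $\mathcal{E}^{\{M_p\}}(\Omega)$. To see that these seminorms also generate the topology, I use that $\mathcal{E}^{\{M_p\}}(\Omega)$ is ultrabornological, in particular barrelled, by Proposition \ref{topologytestspaces}; its topology is then generated by the polar seminorms $q_B(\varphi) = \sup_{f \in B} |\langle f, \varphi\rangle|$ with $B$ ranging over the bounded subsets of $\mathcal{E}'^{\{M_p\}}(\Omega)$. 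Fix such a $B$. The structure theorem (Proposition \ref{structureRoumieu}) supplies a compact $K \Subset \Omega$ and measures $\mu_\alpha(f) \in C'(K)$ with $f = \sum_\alpha (\mu_\alpha(f))^{(\alpha)}$ and, for every $h > 0$, a uniform bound $\|\mu_\alpha(f)\|_{C'(K)} M_\alpha \leq C_h h^{|\alpha|}$ over $f \in B$. Using $\sup_{x\in K}|\varphi^{(\alpha)}(x)|\leq\|\varphi\|_{K,r_j} M_\alpha\prod_{j=0}^{|\alpha|}r_j$, one obtains
\[
q_B(\varphi) \;\leq\; \|\varphi\|_{K,r_j}\, \sup_{f \in B}\sum_{\alpha} \|\mu_\alpha(f)\|_{C'(K)}\, M_\alpha \prod_{j=0}^{|\alpha|} r_j.
\]

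The main technical obstacle is then to choose $r_j \in \mathcal{R}$ making the last sum finite uniformly in $f \in B$. Setting $a_p := \sup_{|\alpha|=p,\,f \in B}\|\mu_\alpha(f)\|_{C'(K)} M_\alpha$, the hypothesis $a_p \leq C_h h^p$ for every $h > 0$ forces $a_p^{1/p} \to 0$. A diagonal construction dual to the one in the pointwise part---picking blocks $[p_k,p_{k+1})$ on which $r_j$ is kept constant and small enough that $a_p \prod_{j=0}^p r_j$ is dominated by a summable geometric sequence, while $r_j \to \infty$---produces the required $r_j$, absorbing the $O(p^d)$ polynomial factor from counting multi-indices. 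This yields $q_B \leq C \|\cdot\|_{K,r_j}$, and combined with the continuity established above completes the topological identification. The diagonal step, converting a ``for every $h$'' statement into a single $r_j \in \mathcal{R}$, is in the same spirit as Lemma \ref{projective} and constitutes the chief nontrivial input.
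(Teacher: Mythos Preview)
Your proof is correct and follows essentially the same route as the paper's: both hinge on the structure theorem for bounded sets in $\mathcal{E}'^{\{M_p\}}(\Omega)$ (Proposition~\ref{structureRoumieu}) and then on converting a ``for every $h>0$'' estimate into a single sequence $r_j\in\mathcal{R}$. The only differences are cosmetic. First, where the paper twice cites \cite[Lemma~3.4]{Komatsu3}---once for the pointwise characterization and once for the final summability---you supply explicit block-diagonal constructions; these are precisely the content of that lemma, and your sketches are valid. Second, your appeal to ultrabornologicity (Proposition~\ref{topologytestspaces}) to assert that the topology is generated by the $q_B$ with $B$ bounded is correct but not needed: for any l.c.s., the polar of the unit ball of a continuous seminorm is equicontinuous, hence strongly bounded, and this is all the paper actually uses when it writes ``there is a bounded set $B$ such that $p(\varphi)\le\sup_{f\in B}|\langle f,\varphi\rangle|$''.
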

\begin{proof}
The first part follows from \cite[Lemma 3.4]{Komatsu3}, we thus only have to check the topological assertion.  Clearly, every seminorm $\| \, \|_{K, r_j}$ acts continuously on $\mathcal{E}^{\{M_p\}}(\Omega)$. Conversely, let $p$ be a continuous seminorm on $\mathcal{E}^{\{M_p\}}(\Omega)$.
There is a bounded set $B \subset \mathcal{E}'^{\{M_p\}}(\Omega)$ such that 
$$
p(\varphi) \leq \sup_{f \in B} |\langle f, \varphi \rangle|, \qquad \varphi \in \mathcal{E}^{\{M_p\}}(\Omega).
$$
Proposition \ref{structureRoumieu} implies that there are a compact set $K \Subset \Omega$ and measures $\mu_\alpha(f) \in C'(K)$, $\alpha \in \N^d$, $f \in B$, such that
$$
\sup_{f \in B} \sup_{\alpha \in \N ^d} \frac{\| \mu_\alpha(f)\|_{C'(K)} M_\alpha}{h^{|\alpha|}} < \infty
$$
for all $h > 0$ and 
$$
f = \sum_{\alpha \in \N^d} (\mu_\alpha(f))^{(\alpha)}, \qquad f \in B.
$$
Hence,
$$
\sup_{f \in B} |\langle f, \varphi \rangle| \leq \sup_{f \in B} \sum_{\alpha \in \N^d}  \|\mu_\alpha(f)\|_{C'(K)} \|\varphi^{(\alpha)}\|_{C(K)} , 
$$
and the result follows once again from \cite[Lemma 3.4]{Komatsu3}.
\end{proof}
\section{Vector-valued quasianalytic functions}\label{sect-vector}
We now turn our attention to spaces of vector-valued quasianalytic functions and their topological properties. Our first goal is to derive a tensor product representation of these spaces.

Given two l.c.s.\ $X$ and $Y$ we denote by $L(X, Y)$ the space of all continuous linear mappings from $X$ into $Y$.  We write $L_\beta(X,Y)$ ($L_c(X,Y)$, resp.) if we want to indicate that we endow $L(X,Y)$ with the strong topology (topology of uniform convergence on balanced convex compact sets). We use the same notation for indicating the topology on $X'$. Recall that if we merely write $X'$ we implicitly endow it with the strong topology. For $t = \beta, c$ we denote by $L_\varepsilon(X'_t, Y )$ the space $L(X'_t, Y )$ endowed with the topology of uniform convergence on equicontinuous subsets of $X'$.

Following Schwartz \cite{Schwartz} and Komatsu \cite{Komatsu3},  we denote by $X\varepsilon Y$ (the $\varepsilon$-product of $X$ and $Y$) the space of all bilinear functionals on $X'_c \times Y'_c$ which are hypocontinuous with respect to the equicontinuous subsets of $X'$ and $Y'$. We endow it with the topology of uniform convergence on products of equicontinuous subsets of $X'$ and $Y'$. As pointed out in \cite[p.\ 657]{Komatsu3}, we have the following canonical isomorphisms of l.c.s.
$$
X \varepsilon Y \cong L_\varepsilon(X'_c,Y) \cong L_\varepsilon(Y'_c,X).
$$

The tensor product $X \otimes Y$ is canonically embedded into $X\varepsilon Y$ via $(x\otimes y)(x', y') = \langle x', x\rangle \langle y', y\rangle$. Clearly, the induced topology on $X \otimes Y$ is the $\varepsilon$-topology. Given continuous linear mappings $T_1: X_1 \rightarrow Y_1$ and $T_2: X_2 \rightarrow Y_2$ we write $T_1 \varepsilon T_2: X_1 \varepsilon X_2 \rightarrow Y_1\varepsilon Y_2$ for the continuous linear mapping given by
$$
T_1 \varepsilon T_2(\Phi)(y'_1,y'_2) = \Phi({}^tT_1 y'_1, {}^tT_2 y'_2), \qquad y'_j \in Y'_j, j = 1,2.
$$
The restriction of $T_1 \varepsilon T_2$ to $X_1 \otimes X_2$ is equal to the tensor product of the mappings $T_1$ and $T_2$.

If $X$ and $Y$ are complete and if either $X$ or $Y$ has the weak approximation property, in particular, if either $X$ or $Y$ is nuclear, we have $X \varepsilon Y = X \widehat{\otimes}_\varepsilon Y$ as locally convex spaces \cite[Prop.\ 1.4]{Komatsu3}. As usual, if either $X$ or $Y$ is nuclear, we write $X \widehat{\otimes}Y :=  X \widehat{\otimes}_\varepsilon Y =  X \widehat{\otimes}_\pi Y$.

We now introduce spaces of vector-valued quasianalytic functions. Let $\Omega \subseteq \R^d$ be open and let $F$ be a locally convex space. We write $\mathcal{E}^{(M_p)}(\Omega; F)$ ($\mathcal{E}^{\{M_p\}}(\Omega; F)$) for the space of all $\bm{\varphi} \in C^\infty(\Omega ; F)$ such that, for each  continuous seminorm $q$ on $F$, $K \Subset \Omega$, and $h > 0$ ($r_j \in \mathcal{R}$),
$$
q_{K,h}(\bm{\varphi}) := \sup_{\alpha \in \N^d}\sup_{x \in K} \frac{q(\bm{\varphi}^{(\alpha)}(x))}{h^{|\alpha|}M_{\alpha}} < \infty \qquad \left( q_{K,r_j}(\bm{\varphi}) := \sup_{\alpha \in \N^d}\sup_{x \in K} \frac{q(\bm{\varphi}^{(\alpha)}(x))}{M_{\alpha}\prod_{j = 0}^{|\alpha|}r_j} < \infty \right). 
$$
We endow it with the locally convex topology generated by the system of seminorms $q_{K,h}$ ($q_{K,r_j}$). Notice that, in view of \cite[Lemma 3.4]{Komatsu3}, $\mathcal{E}^{\{M_p\}}(\Omega; F)$ coincides with the set of all $\bm{\varphi} \in C^\infty(\Omega ; F)$ such that for each  continuous seminorm $q$ on $F$ and $K \Subset \Omega$ there is $h > 0$ such that $q_{K,h}(\bm{\varphi}) < \infty$.

\begin{proposition}\label{charvectorvaluedfunctions}
Let $\Omega \subseteq \R^d$ be open and let $F$ be a sequentially complete locally convex space. Then, $\mathcal{E}^{\ast}(\Omega; F)$ coincides with the space of all functions $\bm{\varphi}: \Omega \rightarrow F$ such that 
$\langle y', \bm{\varphi}(\cdot) \rangle \in \mathcal{E}^\ast(\Omega)$ for all $y' \in F'$. Moreover, we have the following canonical isomorphism of l.c.s.
$$
\mathcal{E}^{\ast}(\Omega; F) \cong \mathcal{E}^{\ast}(\Omega) \varepsilon F,
$$
and, if $F$ is complete,
$$
\mathcal{E}^{\ast}(\Omega; F) \cong \mathcal{E}^{\ast}(\Omega) \varepsilon F \cong \mathcal{E}^{\ast}(\Omega) \widehat{\otimes} F. 
$$
\end{proposition}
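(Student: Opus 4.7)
The plan is to establish the three assertions in sequence, drawing heavily on the structure theorems of the previous section. For the scalar characterization, the forward implication is obvious. For the converse, let $\bm{\varphi}: \Omega \to F$ satisfy $\langle y', \bm{\varphi}\rangle \in \mathcal{E}^*(\Omega)$ for every $y' \in F'$. A classical weak-to-strong smoothness principle for functions with values in a sequentially complete l.c.s.\ yields $\bm{\varphi} \in C^\infty(\Omega; F)$ with $\langle y', \bm{\varphi}^{(\alpha)}\rangle = \langle y', \bm{\varphi}\rangle^{(\alpha)}$. Fixing $K \Subset \Omega$ and $h > 0$ (respectively $r_j \in \mathcal{R}$), the scalar bounds state precisely that the set
$$
S := \left\{ \frac{\bm{\varphi}^{(\alpha)}(x)}{h^{|\alpha|} M_\alpha} : \alpha \in \N^d,\ x \in K \right\}
$$
(respectively with $M_\alpha \prod_{j=0}^{|\alpha|} r_j$ in the denominator) is $\sigma(F, F')$-bounded; Mackey's theorem then renders $S$ bounded in $F$, so $q_{K,h}(\bm{\varphi}) < \infty$ (respectively $q_{K, r_j}(\bm{\varphi}) < \infty$) for every continuous seminorm $q$ on $F$, as required.

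For the identification with the $\varepsilon$-product, I work with the model $\mathcal{E}^*(\Omega) \varepsilon F \cong L_\varepsilon(\mathcal{E}'^*(\Omega)_c, F)$ and construct mutually inverse continuous linear maps. Given $\bm{\varphi} \in \mathcal{E}^*(\Omega; F)$ and $f \in \mathcal{E}'^*(\Omega)$, Proposition \ref{structureBeurling} (respectively \ref{structureRoumieu}) applied to the singleton $\{f\}$ represents $f = \sum_\alpha \mu_\alpha^{(\alpha)}$ with concrete bounds on the measures, and I set
$$
\Phi(\bm{\varphi})(f) := \sum_\alpha (-1)^{|\alpha|} \int \bm{\varphi}^{(\alpha)}\, d\mu_\alpha.
$$
Pairing the measure estimates with the Komatsu seminorms $q_{K,h}(\bm{\varphi})$ or $q_{K,r_j}(\bm{\varphi})$ shows the series converges absolutely in $F$ (using sequential completeness); independence of the chosen decomposition follows by reducing to scalars via $y' \in F'$. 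Applying the uniform structure theorem to an equicontinuous---equivalently bounded, by barreledness of $\mathcal{E}^*(\Omega)$---subset $B \subset \mathcal{E}'^*(\Omega)$ then yields, for each continuous seminorm $q$ on $F$,
$$
\sup_{f \in B} q(\Phi(\bm{\varphi})(f)) \ \leq\ C\, q_{K,h}(\bm{\varphi})
$$
(respectively $C\, q_{K, r_j}(\bm{\varphi})$, where Proposition \ref{Roumieuseminorms} certifies that $q_{K, r_j}$ is a continuous seminorm on $\mathcal{E}^{\{M_p\}}(\Omega; F)$). This simultaneously gives continuity of $\Phi(\bm{\varphi})$ as a map $\mathcal{E}'^*(\Omega)_c \to F$ and continuity of $\Phi$ itself. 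Conversely, for $T \in L_\varepsilon(\mathcal{E}'^*(\Omega)_c, F)$, set $\Psi(T)(x) := T(\delta_x)$ where $\delta_x \in \mathcal{E}'^*(\Omega)$ is point evaluation; then $\langle y', \Psi(T)\rangle = {}^tT(y') \in \mathcal{E}^*(\Omega)$, and the scalar characterization places $\Psi(T)$ in $\mathcal{E}^*(\Omega; F)$. A direct check on point evaluations followed by extension through the structure theorem gives $\Phi \circ \Psi = \mathrm{id}$ and $\Psi \circ \Phi = \mathrm{id}$, and the same seminorm bound provides the continuity of $\Psi$.

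Finally, both $\mathcal{E}^{(M_p)}(\Omega)$ (an $(FN)$-space) and $\mathcal{E}^{\{M_p\}}(\Omega)$ (a $(PLN)$-space, hence nuclear) are complete, so when $F$ is also complete \cite[Prop.\ 1.4]{Komatsu3} identifies $\mathcal{E}^*(\Omega) \varepsilon F$ with $\mathcal{E}^*(\Omega)\,\widehat{\otimes}_\varepsilon\, F = \mathcal{E}^*(\Omega)\,\widehat{\otimes}\, F$. I expect the main obstacle to lie in the uniform control required in the second step: the series $\sum_\alpha \int \bm{\varphi}^{(\alpha)}\, d\mu_\alpha$ must be dominated uniformly as $f$ ranges over an equicontinuous subset of $\mathcal{E}'^*(\Omega)$. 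This is precisely where the uniform form of Proposition \ref{structureRoumieu}---itself resting on the reduction in Lemma \ref{reduction}---and the projective seminorm description of Proposition \ref{Roumieuseminorms} enter decisively, making the Roumieu case notably more technical than its Beurling counterpart.
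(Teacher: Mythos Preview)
Your argument is correct in outline but takes a genuinely different route from the paper's. The paper does not build the $\varepsilon$-product isomorphism by hand through the structure theorems; instead it invokes Komatsu's abstract criterion \cite[Lemma 1.12]{Komatsu3} (recorded as Lemma \ref{criteriumKomatsu}): for a semi-Montel function space $X$ in which the span of compactly supported measures is sequentially dense in $X'_c$, one has $X\varepsilon F \cong \{\bm{\varphi}:\Omega\to F : \langle y',\bm{\varphi}\rangle\in X \text{ for all } y'\in F'\}$. The paper then supplies the required density (Lemma \ref{densitymeasures}) by a short reflexivity argument on the germ spaces, and checks the topological part by a direct seminorm computation using the bipolar theorem and Proposition \ref{Roumieuseminorms}. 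Your approach trades this abstract package for an explicit construction of $\Phi(\bm{\varphi})(f)$ via the measure representation of $f$; this makes the dependence on Propositions \ref{structureBeurling} and \ref{structureRoumieu} very visible and bypasses the density lemma entirely, at the cost of more bookkeeping (well-definedness, convergence, uniform estimates). The paper's route is shorter and stays closer to Komatsu's original treatment in \cite{Komatsu3}; yours is more self-contained.

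One technical point in your second step needs attention. The estimate $\sup_{f\in B} q(\Phi(\bm{\varphi})(f)) \le C\, q_{K,h}(\bm{\varphi})$ over equicontinuous $B$ shows that $\Phi(\bm{\varphi})$ maps equicontinuous sets to bounded sets and that $\Phi$ is continuous into $L_\varepsilon$, but it does \emph{not} by itself yield continuity of $\Phi(\bm{\varphi})\colon \mathcal{E}'^{\ast}(\Omega)_c \to F$, which is what membership in $L(\mathcal{E}'^{\ast}(\Omega)_c,F)$ demands. You can close this either by noting that $\mathcal{E}^{\ast}(\Omega)$ is semi-Montel, so $\mathcal{E}'^{\ast}(\Omega)_c = \mathcal{E}'^{\ast}(\Omega)_\beta$, and that the latter is bornological in both cases (a $(DFN)$-space for Beurling, an $(LFS)$-space for Roumieu by the discussion after \eqref{projgermseq}), whence bounded linear maps on it are continuous; or by checking the second hypocontinuity directly from the identity $\langle y', \Phi(\bm{\varphi})(f)\rangle = \langle f, \langle y',\bm{\varphi}\rangle\rangle$. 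Either fix is routine, but one of them should be stated.
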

The proof of Proposition \ref{charvectorvaluedfunctions} is based on the following criterium due to Komatsu.
\begin{lemma}\cite[Lemma 1.12]{Komatsu3} \label{criteriumKomatsu} Let $\Omega$ be a $\sigma$-compact metrizable locally compact space, let $X$ be a space consisting of continuous scalar-valued functions on $\Omega$ equipped with a  locally convex topology that is semi-Montel and stronger than the topology of uniform convergence on compact subsets of $\Omega$, and let $F$ be a sequentially complete locally convex space.  Suppose that the sequential closure in $X'_c$ of the set of functionals represented by measures with compact support in $\Omega$ is equal to $X'$. Then, $X\varepsilon F \cong L(F'_c, X)$ may be identified with the space of all $\bm{\varphi}: \Omega \rightarrow F$ such that $\langle y', \bm{\varphi}(\cdot) \rangle \in X$ for all $y' \in F'$.
\end{lemma}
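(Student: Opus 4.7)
The plan is to apply Lemma \ref{criteriumKomatsu} with $X = \mathcal{E}^{\ast}(\Omega)$, which will simultaneously identify $\mathcal{E}^{\ast}(\Omega) \varepsilon F$ with the scalarly quasianalytic $F$-valued functions on $\Omega$; the space $\mathcal{E}^{\ast}(\Omega;F)$ with its defining seminorms will then be shown to coincide with this identification as a l.c.s. The obvious hypotheses of Lemma \ref{criteriumKomatsu} hold by inspection: $\Omega \subseteq \R^d$ is $\sigma$-compact metrizable locally compact, and the seminorms generating the topology of $\mathcal{E}^{\ast}(\Omega)$ clearly dominate uniform convergence on compact subsets. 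Moreover $\mathcal{E}^{(M_p)}(\Omega)$ is $(FN)$ and $\mathcal{E}^{\{M_p\}}(\Omega)$ is a complete nuclear $(PLN)$-space by Proposition \ref{topologytestspaces}, so in both cases nuclearity together with completeness yields the semi-Montel property and the identification $X'_\beta = X'_c$. The nontrivial hypothesis is the sequential density in $X'_c$ of the functionals represented by measures with compact support in $\Omega$. Here I would use Proposition \ref{structureBeurling} or \ref{structureRoumieu} to expand any $f \in \mathcal{E}'^{\ast}(\Omega)$ as $f = \sum_{\alpha} \mu_\alpha^{(\alpha)}$, with partial sums converging in $X'_\beta = X'_c$ (the bound $\sum_\alpha \|\mu_\alpha\|_{C'(K)} \|\varphi^{(\alpha)}\|_{C(K)}$ is absolutely summable and uniformly controlled over bounded subsets of $\mathcal{E}^{\ast}(\Omega)$, as in the proof of Proposition \ref{Roumieuseminorms}). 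Each $\mu_\alpha^{(\alpha)}$ is itself approximated in $X'_c$ by compactly supported measures via difference quotients: by induction on $|\alpha|$ one reduces to the convergence $n(\tau_{e_i/n}\nu - \nu) \to \partial_i \nu$ in $X'_c$ for $\nu \in C'(K)$, which follows from Taylor's formula combined with the uniform equicontinuity of first derivatives over a bounded subset of $\mathcal{E}^{\ast}(\Omega)$. A diagonal procedure then completes the verification of sequential density.

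Having applied Lemma \ref{criteriumKomatsu}, we identify $\mathcal{E}^{\ast}(\Omega) \varepsilon F \cong L_\varepsilon(X'_c, F)$ with the set of all $\bm{\varphi}: \Omega \to F$ for which $\langle y', \bm{\varphi}\rangle \in \mathcal{E}^{\ast}(\Omega)$ for every $y' \in F'$, via $\bm{\varphi}(x) = \Phi(\delta_x)$; since $x \mapsto \delta_x$ is smooth as a map $\Omega \to X'_c$ with $\partial^\alpha \delta_x = \delta_x^{(\alpha)}$, any $\Phi \in X \varepsilon F$ produces a smooth $\bm{\varphi}: \Omega \to F$ with $\bm{\varphi}^{(\alpha)}(x) = \Phi(\delta_x^{(\alpha)})$. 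The inclusion $\mathcal{E}^{\ast}(\Omega;F) \subseteq \{\bm{\varphi} : \langle y', \bm{\varphi}\rangle \in \mathcal{E}^{\ast}(\Omega)\ \forall y' \in F'\}$ is a direct consequence of the definition, so the two vector spaces coincide. For the topological identification I would observe that each defining seminorm $q_{K, h}$ (Beurling) or $q_{K, r_j}$ (Roumieu) equals $\sup_{f \in E} q(\Phi(f))$ for the set $E = \{\delta_x^{(\alpha)}/(h^{|\alpha|}M_\alpha) : \alpha \in \N^d,\, x \in K\}$, respectively $E = \{\delta_x^{(\alpha)}/(M_\alpha\prod_{j=0}^{|\alpha|} r_j) : \alpha \in \N^d,\, x \in K\}$, whose equicontinuity in $X'$ is built into the very definition of the seminorms of $\mathcal{E}^{\ast}(\Omega)$ (in the Roumieu case via Proposition \ref{Roumieuseminorms}). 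Conversely, given an equicontinuous $E \subset X'$ and a continuous seminorm $q$ on $F$, I would decompose $f \in E$ as $f = \sum_\alpha \mu_\alpha(f)^{(\alpha)}$ by Proposition \ref{structureBeurling} or \ref{structureRoumieu} and estimate $q_E(\Phi) \leq \sup_{f \in E} \sum_\alpha \|\mu_\alpha(f)\|_{C'(K)} \sup_{x \in K} q(\bm{\varphi}^{(\alpha)}(x))$; this is dominated by a constant multiple of $q_{K, h'}(\bm{\varphi})$ (Beurling, taking $h'$ small compared to the $h$ from the structure theorem so that $\sum_\alpha (hh')^{|\alpha|} < \infty$), or of $q_{K, r_j}(\bm{\varphi})$ (Roumieu, selecting $r_j \in \mathcal{R}$ such that $\sum_\alpha \prod_{j=0}^{|\alpha|} r_j$ is outpaced by the superpolynomial decay of $\sup_f \|\mu_\alpha(f)\|M_\alpha$, cf.\ Proposition \ref{Roumieuseminorms}).

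The final part of the statement is then essentially free: since $\mathcal{E}^{\ast}(\Omega)$ is nuclear, the general fact recalled just before the proposition gives $\mathcal{E}^{\ast}(\Omega) \varepsilon F \cong \mathcal{E}^{\ast}(\Omega) \widehat{\otimes}_\varepsilon F \cong \mathcal{E}^{\ast}(\Omega) \widehat{\otimes}_\pi F$ whenever $F$ is complete. The main obstacle I anticipate is the topological comparison in the Roumieu case: the seminorms of $\mathcal{E}^{\{M_p\}}(\Omega;F)$ are indexed by sequences $r_j \in \mathcal{R}$ rather than by scalars $h > 0$, so extracting from the bounds of Proposition \ref{structureRoumieu} a suitable $r_j$ that dominates $q_E$ requires invoking Proposition \ref{Roumieuseminorms} and an explicit selection of $r_j$ from the super-exponential decay of $\sup_f \|\mu_\alpha(f)\|M_\alpha$.
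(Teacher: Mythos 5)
Your text does not prove the statement at issue. Lemma \ref{criteriumKomatsu} is the abstract Komatsu criterion itself: for a \emph{general} space $X$ of continuous functions on a $\sigma$-compact metrizable locally compact $\Omega$, semi-Montel and finer than compact convergence, with the compactly supported measures sequentially dense in $X'_c$, one must show that $X\varepsilon F\cong L(F'_c,X)$ is realized by the scalarly-$X$ functions $\bm{\varphi}:\Omega\to F$. Your very first sentence is ``apply Lemma \ref{criteriumKomatsu} with $X=\mathcal{E}^{\ast}(\Omega)$'', so you presuppose exactly the statement you were asked to establish, and what you then write is an argument for Proposition \ref{charvectorvaluedfunctions} (the application), not for the lemma. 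In the paper the lemma carries no proof at all --- it is quoted from Komatsu --- while the verification of its hypotheses for $\mathcal{E}^{\ast}(\Omega)$ and the topological comparison of seminorms are the content of Lemma \ref{densitymeasures} and Proposition \ref{charvectorvaluedfunctions}, which are separate statements.

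A proof of the lemma itself has to run in the abstract setting. The easy direction sends $\Psi\in L(X'_c,F)$ to $\bm{\varphi}(x):=\Psi(\delta_x)$, noting $\delta_x\in X'$ because the topology of $X$ dominates compact convergence. The substantive direction starts from $\bm{\varphi}$ with $\langle y',\bm{\varphi}(\cdot)\rangle\in X$ for all $y'\in F'$, forms $T:F'\to X$, $y'\mapsto\langle y',\bm{\varphi}\rangle$, and must show $T\in L(F'_c,X)$ and that the associated bilinear form is hypocontinuous: one uses sequential completeness of $F$ to integrate $\bm{\varphi}$ against compactly supported measures (so that ${}^tT$ maps such measures into $F\subseteq F''$), the sequential density hypothesis to conclude ${}^tT(X')\subseteq F$, and the semi-Montel property to pass between bounded, compact and equicontinuous sets. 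None of these steps appears in your proposal. (Separately, read as a blind proof of Proposition \ref{charvectorvaluedfunctions}, your route through the structure theorems \ref{structureBeurling}--\ref{structureRoumieu} and difference quotients is a workable substitute for the paper's Lemma \ref{densitymeasures} plus Proposition \ref{Roumieuseminorms}; but that is not the statement under review.)
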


\begin{lemma}\label{densitymeasures}
Let $\Omega \subseteq \R^d$ be open. The sequential closure in $\mathcal{E}_\beta'^\ast(\Omega)$ of the linear span of the set $\{ \delta_x  : \, x \in \Omega\}$ is equal to $\mathcal{E}'^\ast(\Omega)$.
\end{lemma}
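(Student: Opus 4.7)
My plan is to apply the structure theorems of Section~\ref{sect-structure} to reduce $f \in \mathcal{E}'^\ast(\Omega)$ to a convergent series of derivatives of measures, and then to approximate each such derivative of a measure by a finite linear combination of Dirac deltas via Riemann sums and finite differences. Proposition~\ref{structureBeurling} or~\ref{structureRoumieu} provides a compact $K \Subset \Omega$ and measures $\mu_\alpha \in C'(K)$ with $f = \sum_{\alpha \in \N^d} \mu_\alpha^{(\alpha)}$ and $\|\mu_\alpha\|_{C'(K)} M_\alpha \leq C h^{|\alpha|}$ for some $h > 0$ (Beurling) or every $h > 0$ (Roumieu). For any bounded $B \subset \mathcal{E}^\ast(\Omega)$ the defining seminorms yield $\sup_{\varphi \in B} \|\varphi^{(\alpha)}\|_{C(K)} \leq C_B \lambda^{|\alpha|} M_\alpha$ for every $\lambda > 0$ (Beurling) or some $\lambda > 0$ (Roumieu). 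Matching the free parameter against the fixed one so that $h\lambda < 1$, the tail $\sup_{\varphi \in B} \sum_{|\alpha| > N} \|\mu_\alpha\|_{C'(K)} \|\varphi^{(\alpha)}\|_{C(K)}$ is a geometric remainder, so $f_N := \sum_{|\alpha|\leq N} \mu_\alpha^{(\alpha)}$ converges to $f$ strongly in $\mathcal{E}'^\ast(\Omega)$.

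Each $\mu_\alpha^{(\alpha)}$ in $f_N$ is then approximated by finite linear combinations of Dirac deltas in two substeps. Substep~(a): discretize $\mu_\alpha$ as a Riemann sum $\nu_\alpha^{(n)} := \sum_k \mu_\alpha(K_{k,n}) \delta_{x_{k,n}}$ for partitions $(K_{k,n})_k$ of $K$ of mesh at most $1/n$; since $\{\varphi^{(\alpha)}|_K : \varphi \in B\}$ is equicontinuous (being bounded in $C^\infty(K)$) and hence relatively compact in $C(K)$ by Arzel\`a--Ascoli, the weak-$\ast$ convergence $\nu_\alpha^{(n)} \to \mu_\alpha$ is uniform on that family, so $(\nu_\alpha^{(n)})^{(\alpha)} \to \mu_\alpha^{(\alpha)}$ strongly. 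Substep~(b): approximate each $\delta_x^{(\alpha)}$ by a suitable $|\alpha|$-th order finite-difference combination of Dirac deltas of the shape $h^{-|\alpha|}\sum_{\beta \leq \alpha} c_\beta\, \delta_{x + h\beta}$, whose error by Taylor's theorem is $O(h)\cdot\sup\{\|\varphi^{(\gamma)}\|_{C(K')} : \varphi \in B,\ |\gamma| = |\alpha|+1\}$---finite for bounded $B$ and a slightly enlarged compact $K' \Subset \Omega$ containing $K$ in its interior.

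To extract a genuine sequence rather than a net: in the Beurling case $\mathcal{E}^{(M_p)}(\Omega)$ is an $(FN)$-space with a countable fundamental system of bounded sets, so a standard diagonalization over that system yields a sequence in the linear span converging to $f$ strongly. In the Roumieu case, Propositions~\ref{topologytestspaces} and~\ref{VogtandWeng} imply that $\mathcal{E}'^{\{M_p\}}(\Omega) \cong \varinjlim_n \mathcal{E}'^{\{M_p\}}[K_n]$ is a regular $(LFN)$-space; $f$ lies in the Fr\'echet step $\mathcal{E}'^{\{M_p\}}[K]$ for some $K$, all Dirac-delta approximants produced above lie in $\mathcal{E}'^{\{M_p\}}[K']$ for a slightly enlarged $K'$, and diagonalization inside that Fr\'echet space produces the desired sequence, whose convergence there entails strong convergence in $\mathcal{E}'^{\{M_p\}}(\Omega)$. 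The main obstacle is the parameter bookkeeping in the truncation step (the Beurling ``some $h$'' vs.\ Roumieu ``every $h$'' duality, dualized by the opposite condition on bounded sets $B$); the discretization and finite-difference substeps reduce to routine Arzel\`a--Ascoli and Taylor arguments once this bookkeeping is correct.
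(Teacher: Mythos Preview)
Your approach is genuinely different from the paper's. The paper gives a short duality argument: in the Beurling case it uses that $\mathcal{E}'^{(M_p)}(\Omega)$ is a $(DFS)$-space, where sequential closure equals closure, so density of $\operatorname{span}\{\delta_x\}$ (immediate from Hahn--Banach and reflexivity) already yields the claim; in the Roumieu case it passes to a Fr\'echet step $\mathcal{E}'^{\{M_p\}}[\overline{\Omega}_n]$, gets density there from Hahn--Banach, reflexivity and the quasianalyticity condition $(QA)$, and uses metrizability to upgrade density to sequential density. No structure theorems, Riemann sums, or finite differences are needed. Your constructive route through Propositions~\ref{structureBeurling}/\ref{structureRoumieu}, discretization of measures, and finite-difference approximation of $\delta_x^{(\alpha)}$ is correct in outline, gives an explicit approximating scheme, and your Roumieu diagonalization inside the Fr\'echet step $\mathcal{E}'^{\{M_p\}}[K']$ is fine.

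There is, however, a real gap in your Beurling packaging. You write that $\mathcal{E}^{(M_p)}(\Omega)$ is ``an $(FN)$-space with a countable fundamental system of bounded sets'' and diagonalize over that system. This is false: a metrizable l.c.s.\ with a fundamental sequence of bounded sets is normable, and $\mathcal{E}^{(M_p)}(\Omega)$ is not. So you cannot enumerate the bounded sets and diagonalize directly in $\mathcal{E}'^{(M_p)}_\beta(\Omega)$. Two easy fixes are available. First, you can simply observe that $\mathcal{E}'^{(M_p)}(\Omega)$ is a $(DFS)$-space, where sequential closure coincides with closure, so the three sequential approximation layers you built collapse to a single sequential closure---but this is precisely the abstract fact the paper invokes at the outset, making the preceding construction redundant. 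Second, and more in the spirit of your argument, note that $\mathcal{E}'^{(M_p)}(\Omega) \cong \varinjlim_n X'_n$ with $X'_n$ Banach (notation as in the proof of Proposition~\ref{structureBeurling}); choosing $m$ with $K \Subset K_m$ and $m$ larger than the $h$ from the structure theorem, all your truncations, Riemann sums, and finite-difference approximants live and converge in the single Banach space $X'_m$, where diagonalization is unproblematic, and convergence there implies strong convergence in the inductive limit.
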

\begin{proof}
\emph{Beurling case}: The sequential closure of a subset in a $(DFS)$-space is equal to its closure (cf.  \cite[ Prop.\ 8.5.28]{Bonet2}). Therefore, it suffices to show that the linear span of the set $\{ \delta_x : \, x \in \Omega\}$ is dense in  $\mathcal{E}_\beta'^{(M_p)}(\Omega)$, but this follows at once from the Hahn-Banach theorem and the fact that the space $\mathcal{E}^{(M_p)}(\Omega)$ is reflexive.

$\emph{Roumieu case}$: Let $(\Omega_n)_{n\in \N}$ be an exhaustion by relatively compact open subsets of $\Omega$. We have  $\mathcal{E}_\beta'^{\{M_p\}}(\Omega) = \varinjlim \mathcal{E}_\beta'^{\{M_p\}}[\overline{\Omega}_n]$ as locally convex spaces. Let $n \in \N$ be arbitrary. The condition $(QA)$ implies that an element $\varphi \in \mathcal{E}^{\{M_p\}}[\overline{\Omega}_n]$ is equal to zero if and only if  one (and hence all) of its representatives vanishes on $\Omega_n$. Hence, by the Hahn-Banach theorem and the fact that the space $\mathcal{E}^{\{M_p\}}[\overline{\Omega}_n]$ is reflexive, we obtain that the linear span of the set $\{ \delta_x \, : \, x \in \Omega_n\}$ is dense in $\mathcal{E}_\beta'^{\{M_p\}}[\overline{\Omega}_n]$. Since the latter space is a Fr\'echet space, we actually have that for each $f \in \mathcal{E}'^{\{M_p\}}[\overline{\Omega}_n]$ there is a sequence $(f_j)_{j \in \N} \subset \operatorname{span} \{ \delta_x \, : \, x \in \Omega_n\} \subset  \operatorname{span} \{ \delta_x \, : \, x \in \Omega\}$ such that $f_j \to f$, as $j \to \infty$, in  $\mathcal{E}_\beta'^{\{M_p\}}[\overline{\Omega}_n]$ and, thus, in $\mathcal{E}_\beta'^{\{M_p\}}(\Omega)$. \end{proof}

\begin{proof}[Proof of Proposition \ref{charvectorvaluedfunctions}] With the aid of Lemma \ref{densitymeasures},  the proof now becomes identical to that of \cite[Thm.\ 3.10]{Komatsu3}.  We repeat the argument for the sake of completeness.
We only show the Roumieu case, the Beurling case is similar. Clearly, $\bm{\varphi} \in \mathcal{E}^{\{M_p\}}(\Omega ; F)$ implies that $\langle y', \bm{\varphi}(\cdot) \rangle \in \mathcal{E}^{\{M_p\}}(\Omega)$ for all $y' \in F'$. Conversely, let $\bm{\varphi} : \Omega \rightarrow F$ be a function having the latter property. In particular, it holds that $\langle y', \bm{\varphi}(\cdot) \rangle \in C^\infty(\Omega)$ for all $y' \in F'$ and, thus, by a well known result, that $\bm{\varphi} \in C^\infty(\Omega; F)$ and 
\begin{equation}
\langle y', \bm{\varphi}(\cdot) \rangle^{(\alpha)} = \langle y', \bm{\varphi}^{(\alpha)}(\cdot) \rangle, \qquad y' \in F', \alpha \in \N^d.
\label{changeder}
\end{equation}
Hence, by Proposition \ref{Roumieuseminorms}, we obtain that for each $K \Subset \Omega$ and $r_j \in \mathcal{R}$  the set
$$
\left \{  \frac{\bm{\varphi}^{(\alpha)}(x)}{M_\alpha \prod_{j = 0}^{|\alpha|}r_j} \, : \, x \in K, \alpha \in \N^d    \right\}
$$
is weakly bounded in $F$. By Mackey's theorem the set is bounded in $F$, which precisely means that $\bm{\varphi} \in \mathcal{E}^{\{M_p\}}(\Omega ; F)$. This shows the first part of the proposition. By Lemmas \ref{criteriumKomatsu} and \ref{densitymeasures}, we therefore have $\mathcal{E}^{\{M_p\}}(\Omega; F) \cong \mathcal{E}^{\{M_p\}}(\Omega) \varepsilon F$ as vector spaces. We now show that this isomorphism also holds topologically. Let $K \Subset \Omega$, $r_j \in \mathcal{R}$, and let $q$ be an arbitrary continuous seminorm on $F$.  Define $A$ to be the polar set of the $\| \, \|_{K, r_j}$-unit ball in $\mathcal{E}^{\{M_p\}}(\Omega)$ and $B$ to be the polar set of the $q$-unit ball in $F$. Hence, by \eqref{changeder} and the bipolar theorem,
\begin{align*}
\sup \{ | \langle f , \langle y', \bm{\varphi}(\cdot)\rangle \rangle | \, : \, f \in A, y' \in B\} &= \sup \{ \| \langle y', \bm{\varphi}(\cdot)\rangle \|_{K,r_j} \, : \, y' \in B\}  \\
&= \sup \left \{  \frac{|\langle y',\bm{\varphi}^{(\alpha)}(x) \rangle|}{M_\alpha \prod_{j = 0}^{|\alpha|}r_j} \, : \,  y' \in B, x \in K, \alpha \in \N^d    \right\}  \\
&= q_{K,r_j}(\bm{\varphi}).
\end{align*} 
In view of Proposition \ref{Roumieuseminorms}, this shows that the above isomorphism indeed holds topologically. The last part follows from the fact that the space $\mathcal{E}^{\{M_p\}}(\Omega)$ is nuclear.
\end{proof}

Next, we are interested in the topological properties of the spaces $\mathcal{E}^\ast(\Omega;F)$. We start with a discussion about the $\varepsilon$-product of $(PLS)$-spaces. Let $X = \varprojlim X_n$ be a $(PLN)$-space with $(X_n)_{n \in \N}$ a reduced projective spectrum of $(DFN)$-spaces and let $Y = \varprojlim Y_n$ be a $(PLS)$-space with $(Y_n)_{n \in \N}$ a reduced projective spectrum of $(DFS)$-spaces. First notice that, by \cite[Prop.\ 1.5]{Komatsu3}, we have the following isomorphism of l.c.s.
$$
X\varepsilon Y \cong \varprojlim_{n \in \N} {X_n \varepsilon Y_n}.
$$ 
Moreover, as the $\varepsilon$-product of two $(DFS)$-spaces is again a $(DFS)$-space \cite[Prop. 4.3]{Bierstedt} and $X\varepsilon Y = X \widehat{\otimes} Y$ is dense in each $X_n\varepsilon Y_n = X_n \widehat{\otimes} Y_n$, $X\varepsilon Y$ is a $(PLS)$-space which can be represented as the projective limit of the reduced spectrum $(X_n \varepsilon Y_n)_{n \in \N}$  of $(DFS)$-spaces. It is highly desirable to find conditions on $X$ and $Y$ which ensure that $X\varepsilon Y$ is ultrabornological. Doma\'nski \cite{Domanski2} achieved this by making use of the so called dual interpolation estimates for $(PLS)$-spaces. These were introduced in \cite{Bonet} and can be viewed as abstract Phragm\'en-Lindel\"of conditions. Let us discuss the precise definition. 

Let $X = \varprojlim X_n$ be a $(PLS)$-space with $(X_n)_{n \in \N}$ a projective spectrum of $(DFS)$-spaces. Suppose that the $X_n$ are given by 
$$X_n = \varinjlim_{N \in \N} X_{n,N}$$
with $(X_{n,N}, \| \, \|_{n,N})$ Banach spaces. We say that $X$ has the \emph{dual interpolation estimate for small theta} if 
$$
\forall n \; \exists m \geq n \; \forall k \geq m \; \exists N \; \forall M \geq N \; \exists \theta_0 \in (0, 1) \; \forall \theta \in (0, \theta_0)\; \exists K \
\geq M \; \exists C> 0 \; \forall x' \in X'_n :
$$
$$
\| x'\|^\ast_{m,M} \leq C \left( \| x'\|^\ast_{k,K}  \right)^{1-\theta}  \left( \|x'\|^\ast_{n,N}  \right)^{\theta}.
$$
It is known that the dual interpolation estimate for small theta implies that the space is ultrabornological \cite[Thm.\ 3.2.18]{Wengenroth2}. Moreover, by using \cite[Prop.\ 1.1]{Bonet}, one can readily check that a $(DFS)$-space $X$ satisfies the dual interpolation estimate for small theta if and only if $X'$ satisfies Vogt's condition $(\underline{DN})$ (see \cite[p.\ 368]{Meise} for the definition). The following proposition of Bonet and Doma\'nski is very important for us.

\begin{proposition}\cite[Cor.\ 2.2]{Bonet}\label{realanalyticDIE}
Let $\Omega \subseteq \R^d$ be open. The space $\mathcal{A}(\Omega)$ satisfies the dual interpolation estimate for small theta. 
\end{proposition}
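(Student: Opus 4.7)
The plan is to work on the dual side and exploit the Laplace--Martineau identification of analytic functionals with entire functions of exponential type; the required interpolation will then follow from a Phragm\'en--Lindel\"of style argument for the plurisubharmonic function $\log|F|$ on $\C^d$.

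First, I would fix a defining projective spectrum. Take an exhaustion $(K_n)_{n \in \N}$ of $\Omega$ by compact \emph{convex} sets and write $\mathcal{A}(\Omega) \cong \varprojlim_n \mathcal{A}[K_n]$ as a reduced projective spectrum of $(DFN)$-spaces; the non-convex case reduces to this one by the support splitting of Proposition \ref{splitting} together with a covering argument in the spirit of Lemma \ref{reduction}. Via the Laplace transform, $\mathcal{A}'[K_n]$ is realized as the space of entire functions $F$ on $\C^d$ satisfying $|F(\xi)| \leq C_\varepsilon \exp(h_{K_n}(\xi) + \varepsilon|\xi|)$ for every $\varepsilon > 0$, and the natural Banach steps of the $(FS)$-space $\mathcal{A}'[K_n]$ correspond to the weighted sup-norms
\[
\|F\|_{n,N} \,=\, \sup_{\xi \in \C^d} |F(\xi)|\, e^{-h_{K_n}(\xi) - |\xi|/N}.
\]
Under this identification, the dual interpolation estimate becomes an interpolation bound between three weighted sup-norms on entire functions, with a convex compact $K_m$ sandwiched between $K_n$ and $K_k$.

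Second, I would choose $m$ so that $K_n + \overline{B}(0,c) \subset K_m$ for some $c > 0$; this geometric inflation supplies the gain $h_{K_m}(\xi) \geq h_{K_n}(\xi) + c|\xi|$ needed to absorb the weight $h_{K_k}$ after the interpolation. The naive pointwise estimate $|F|^{1-\theta}|F|^\theta$ alone is insufficient, since the residual term $(1-\theta)(h_{K_k} - h_{K_m})$ cannot be controlled for $\theta$ small and $k$ large. Instead one exploits the entire/plurisubharmonic nature of $\log|F|$: applying a Phragm\'en--Lindel\"of / Hadamard three-lines argument to the subharmonic function $z \mapsto \log|F(\xi + z\zeta)|$ along suitable complex directions produces a sharper convex estimate that, combined with the geometric cushion $c$, yields
\[
\|F\|_{m,M} \,\leq\, C\, \|F\|_{k,K}^{\,1-\theta}\, \|F\|_{n,N}^{\,\theta}
\]
with constants controllable in terms of $c$ and the three parameters $\theta$, $N$, $M$, provided $K = K(\theta)$ is chosen large enough.

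The main obstacle is the quantifier bookkeeping intertwined with the fact that the estimate must genuinely exploit analyticity rather than pointwise bounds alone. Since $m$ has to be fixed before $k, N, M, \theta_0$ and $K$, the convex-geometric cushion $c$ between $K_n$ and $K_m$ is locked in at the very start; one must then, for each subsequent $k, N, M$, produce a $\theta_0$ small enough and a $K(\theta) \geq M$ large enough so that the Phragm\'en--Lindel\"of interpolation absorbs the unavoidable linear defect $(1-\theta)(h_{K_k} - h_{K_m})$ into the $|\xi|/M$ term uniformly for $\theta \in (0,\theta_0)$. Carrying out this uniform control, together with the verification that the Laplace transform indeed identifies the Banach step dual norms with the weighted sup-norms above, is the delicate technical part of the argument.
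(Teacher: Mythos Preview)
The paper does not give its own proof of this proposition: it is quoted verbatim as \cite[Cor.\ 2.2]{Bonet} and used as a black box, so there is nothing in the present article to compare your argument against. Your outline---Fourier--Laplace/Martineau identification of $\mathcal{A}'[K]$ with weighted spaces of entire functions, followed by a Phragm\'en--Lindel\"of (Hadamard three-lines) argument for the plurisubharmonic function $\log|F|$---is indeed the natural route and is essentially the strategy carried out in the cited reference of Bonet and Doma\'nski.

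One point deserves care. Your first step posits an exhaustion of $\Omega$ by \emph{convex} compacts, which is impossible for non-convex $\Omega$ (an annulus, say); you then defer the general case to ``support splitting \ldots\ in the spirit of Lemma \ref{reduction}''. That reduction is more delicate than it looks: the dual interpolation estimate is a quantified condition on the full projective spectrum $(\mathcal{E}^{\{p!\}}[K_n])_n$, and passing from convex pieces to a non-convex $K_n$ via Proposition \ref{splitting} mixes the Banach-step norms in a way that does not obviously preserve the ``for small $\theta$'' quantifier structure. In \cite{Bonet} the general case is handled by working directly with the $(PLS)$-representation and invoking a Phragm\'en--Lindel\"of principle valid without convexity; if you pursue your plan, the reduction to convex compacts is the place where a genuine argument---not just an analogy with Lemma \ref{reduction}---is required.
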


\begin{proposition}\label{topologyvectorvalued}
Let $\Omega \subseteq \R^d$ be open.
\begin{itemize}
\item[$(i)$] If $F$ is a Fr\'echet space, then $\mathcal{E}^{(M_p)}(\Omega; F)$ is a Fr\'echet space.
\item[$(ii)$] If $F$ is a $(DFS)$-space such that $F'$ satisfies  $(\underline{DN})$, then $\mathcal{E}^{\{M_p\}}(\Omega; F)$ is an ultrabornological $(PLS)$-space.
\end{itemize}
\end{proposition}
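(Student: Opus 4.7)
For part (i), the strategy is to reduce to a tensor-product statement. Since $\mathcal{E}^{(M_p)}(\Omega)$ is a nuclear Fr\'echet space (being an (FN)-space as noted before Proposition \ref{topologytestspaces}) and $F$ is Fr\'echet, hence complete, Proposition \ref{charvectorvaluedfunctions} yields
$$
\mathcal{E}^{(M_p)}(\Omega; F) \cong \mathcal{E}^{(M_p)}(\Omega)\,\widehat{\otimes}\,F,
$$
and the completed tensor product of two Fr\'echet spaces is Fr\'echet. Equivalently, one can argue directly from the definition: for a compact exhaustion $(K_n)_n$ of $\Omega$, the choice $h = 1/n$, and a countable fundamental system of continuous seminorms $(q_n)_n$ for $F$, the collection $\{(q_m)_{K_n, 1/n}\}_{m,n}$ is a countable fundamental system of seminorms, so the space is metrizable; completeness follows routinely from that of $F$ by working componentwise on the derivatives.

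For part (ii), the plan is to combine Proposition \ref{charvectorvaluedfunctions} with Doma\'nski's $\varepsilon$-product theorem for (PLS)-spaces from \cite{Domanski2}. First, Proposition \ref{charvectorvaluedfunctions} gives
$$
\mathcal{E}^{\{M_p\}}(\Omega; F) \cong \mathcal{E}^{\{M_p\}}(\Omega)\,\varepsilon\,F.
$$
Writing $\mathcal{E}^{\{M_p\}}(\Omega) \cong \varprojlim_n X_n$ with $X_n = \mathcal{E}^{\{M_p\}}[K_n]$ for a compact exhaustion $(K_n)_n$ of $\Omega$, one obtains a reduced projective spectrum of (DFN)-spaces. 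By the discussion preceding Proposition \ref{realanalyticDIE}, the $\varepsilon$-product commutes with the projective limit, and each $X_n\,\varepsilon\,F$ is a (DFS)-space as the $\varepsilon$-product of a (DFN)-space with a (DFS)-space; thus
$$
\mathcal{E}^{\{M_p\}}(\Omega)\,\varepsilon\,F \cong \varprojlim_n (X_n\,\varepsilon\,F)
$$
is a reduced (PLS)-representation of the space.

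The remaining step is ultrabornologicity. By Proposition \ref{VogtandWeng} applied to the spectrum $(X_n\,\varepsilon\,F)_n$, it suffices to show that $\operatorname{Proj}^1$ of this spectrum vanishes, and this is what Doma\'nski's $\varepsilon$-product theorem yields, provided that $\mathcal{E}^{\{M_p\}}(\Omega)$ satisfies the dual interpolation estimate for small theta and that $F$ is a (DFS)-space whose strong dual lies in $(\underline{DN})$. The latter is built into the hypothesis on $F$. The principal obstacle is therefore to verify the dual interpolation estimate for $\mathcal{E}^{\{M_p\}}(\Omega)$ itself. The natural approach is to transfer it from the analytic case (Proposition \ref{realanalyticDIE}) via the continuous inclusion $\mathcal{A}(\Omega) \hookrightarrow \mathcal{E}^{\{M_p\}}(\Omega)$ (valid since $p! \subset M_p$), in conjunction with the identification of $\ast$-carriers from Proposition \ref{support} and the explicit projective seminorm description of Proposition \ref{Roumieuseminorms}; these tools make it possible to compare the dual norms on $\mathcal{E}'^{\{M_p\}}[K_n]$ with those on $\mathcal{A}'[K_n]$ sharply enough to inherit the DIE estimate. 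Matching the three-norm interpolation inequalities with the required quantitative precision is the main technical hurdle of the argument.
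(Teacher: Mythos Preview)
Your treatment of part $(i)$ is correct and coincides with the paper's argument.

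For part $(ii)$, however, there is a genuine gap. Your route requires the dual interpolation estimate for small theta for $\mathcal{E}^{\{M_p\}}(\Omega)$ itself, and you do not prove it: the suggested ``transfer'' from $\mathcal{A}(\Omega)$ via the inclusion, the carrier identification of Proposition~\ref{support}, and the seminorm description of Proposition~\ref{Roumieuseminorms} is not a proof. DIE is a quantitative three-norm inequality on the dual steps, and a continuous inclusion $\mathcal{A}(\Omega)\hookrightarrow\mathcal{E}^{\{M_p\}}(\Omega)$ gives one-sided control of dual norms only; it does not by itself yield the interpolation inequality on $\mathcal{E}'^{\{M_p\}}[K_n]$. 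You explicitly flag this as ``the main technical hurdle,'' and indeed it is the whole content of what would remain to be shown.

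The paper deliberately avoids this issue. Instead of establishing DIE for $\mathcal{E}^{\{M_p\}}(\Omega)$, it mimics the proof of Proposition~\ref{topologytestspaces} at the vector-valued level: one represents $\mathcal{E}^{\{M_p\}}(\Omega;F)$ as $\varprojlim_n \mathcal{E}^{\{M_p\}}[K_n]\widehat{\otimes}F$, identifies the dual inductive spectrum with $(L_\beta(\mathcal{E}^{\{M_p\}}[K_n],F'))_n$, and checks $\alpha$-regularity directly via Proposition~\ref{VogtandWeng}. For a bounded set $B$ in the dual one uses the continuous inclusion $\mathcal{E}'^{\{M_p\}}(\Omega;F)\to\mathcal{A}'(\Omega;F)$; since $\mathcal{A}(\Omega;F)$ is ultrabornological (this is where Doma\'nski's Proposition~\ref{epsilonPLS} and the DIE for $\mathcal{A}(\Omega)$ from Proposition~\ref{realanalyticDIE} are used, via Remark~\ref{realanalytic-top}), $B$ already sits in some $L(\mathcal{A}[K_n],F')$. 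One then invokes the support theorem (Proposition~\ref{support}), applied to each $y\circ\mathbf{f}$ for $y\in F$, together with the Pt\'ak closed graph theorem, to conclude $B\subset L(\mathcal{E}^{\{M_p\}}[K_n],F')$. Thus DIE is only needed for the real analytic sheaf, where it is available, and the passage to $\mathcal{E}^{\{M_p\}}$ is handled by the support theorem rather than by an interpolation estimate.
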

The proof of Proposition \ref{topologyvectorvalued}$(i)$ is easy, one just has to combine Proposition \ref{charvectorvaluedfunctions} with the fact that the $\varepsilon$-product of two Fr\'echet spaces is again a Fr\'echet space.  For Proposition \ref{topologyvectorvalued}$(ii)$, we use the following result due to Doma\'nski.
\begin{proposition}\cite[Thm.\ 5.6]{Domanski2}  \label{epsilonPLS}
Let $X$ be a $(PLN)$-space and $F$ a $(PLS)$-space. Suppose that both $X$ and $F$ satisfy the dual interpolation estimate for small theta. Then, $X \varepsilon F$ is an ultrabornological $(PLS)$-space.\end{proposition}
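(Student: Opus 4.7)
Part (i) is immediate from the tensor product representation. By Proposition \ref{charvectorvaluedfunctions} one has the topological isomorphism $\mathcal{E}^{(M_p)}(\Omega;F)\cong\mathcal{E}^{(M_p)}(\Omega)\varepsilon F$. Both factors are Fréchet — $\mathcal{E}^{(M_p)}(\Omega)$ is an $(FN)$-space — and by nuclearity this $\varepsilon$-product coincides with the completed projective tensor product $\mathcal{E}^{(M_p)}(\Omega)\widehat{\otimes}F$, which is again Fréchet.

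For part (ii), the same proposition gives $\mathcal{E}^{\{M_p\}}(\Omega;F)\cong\mathcal{E}^{\{M_p\}}(\Omega)\varepsilon F$ topologically. The strategy is to conclude from Proposition \ref{epsilonPLS}; for this one has to certify that both factors are respectively a $(PLN)$- and a $(PLS)$-space satisfying the dual interpolation estimate for small theta. Three of these four conditions come for free: Proposition \ref{topologytestspaces} identifies $\mathcal{E}^{\{M_p\}}(\Omega)$ as a $(PLN)$-space, every $(DFS)$-space is a $(PLS)$-space, and the hypothesis that $F'$ satisfies $(\underline{DN})$ is exactly the dual interpolation estimate for small theta for $F$, via the equivalence recorded just before Proposition \ref{realanalyticDIE}.

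The substantive work is thus to establish DIE for small theta for $\mathcal{E}^{\{M_p\}}(\Omega)$ itself. My plan is to transfer it from the analytic case, Proposition \ref{realanalyticDIE}, by working on the dual side. For convex $\Omega$, the Fourier--Laplace transform from Section \ref{sect-structure} realizes $\mathcal{E}'^{\{M_p\}}(\Omega)$ as the inductive limit of the weighted spaces $\mathcal{O}^{M_p,\lambda}_K$ of entire functions with growth controlled by $h_K(\xi)+M(\xi/\lambda)$, whereas the corresponding picture for $\mathcal{A}'(\Omega)$ uses weights $h_K(\xi)+\varepsilon|\xi|$; condition $(NE)$ forces the former weights to dominate the latter. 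The Phragmén--Lindelöf three-lines interpolation inequality that underlies Proposition \ref{realanalyticDIE} can then be transcribed into this weighted setting to yield DIE for small theta on convex $\Omega$. The passage to a general open $\Omega$ is handled by covering a given compact subset by finitely many convex compacta and invoking the support-splitting of Proposition \ref{splitting}, exactly as in the proof of Lemma \ref{reduction}. I expect the main difficulty to lie in faithfully tracking the intricate quantifier chain in the definition of DIE for small theta, both through the Fourier--Laplace transcription and through the convex-splitting reduction.
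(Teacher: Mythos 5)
The statement you were asked to prove is Proposition \ref{epsilonPLS}, which is Doma\'nski's theorem on the $\varepsilon$-product of an abstract $(PLN)$-space with a $(PLS)$-space, both satisfying the dual interpolation estimate for small theta. The paper does not prove this result --- it is imported verbatim from \cite[Thm.\ 5.6]{Domanski2} --- and neither does your text: what you have written is a proof sketch of Proposition \ref{topologyvectorvalued}, and in your part (ii) you explicitly invoke Proposition \ref{epsilonPLS} as a known tool. Relative to the assigned statement the argument is therefore circular; relative to the paper, you have addressed a different proposition.

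Even read as an attempt at Proposition \ref{topologyvectorvalued}(ii), your route diverges from the paper's in a way that leaves a real gap. The paper never establishes the dual interpolation estimate for $\mathcal{E}^{\{M_p\}}(\Omega)$; it applies Proposition \ref{epsilonPLS} only to $\mathcal{A}(\Omega)\varepsilon F$ (Remark \ref{realanalytic-top}), and then deduces ultrabornologicity of $\mathcal{E}^{\{M_p\}}(\Omega;F)$ by proving $\alpha$-regularity of the dual spectrum $(L_\beta(\mathcal{E}^{\{M_p\}}[K_n],F'))_n$ directly, using the continuous inclusion into $\mathcal{A}'(\Omega;F)$ together with H\"ormander's support theorem (Proposition \ref{support}), the identification \eqref{subsupport}, and Proposition \ref{VogtandWeng}. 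Your plan instead requires the dual interpolation estimate for small theta for $\mathcal{E}^{\{M_p\}}(\Omega)$ itself, which is precisely the point the paper is engineered to avoid: the Phragm\'en--Lindel\"of transcription on the Fourier--Laplace side is only asserted, not carried out, and the passage from convex to arbitrary $\Omega$ via Proposition \ref{splitting} is not obviously compatible with the quantifier structure of the estimate (the splitting controls carriers and bounded sets, not three-term interpolation inequalities on the local Banach duals). Unless you can actually prove the dual interpolation estimate for $\mathcal{E}^{\{M_p\}}(\Omega)$ on general open sets --- a statement the paper nowhere claims --- this step fails, whereas the paper's support-theorem detour sidesteps it entirely.
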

\begin{remark}
Doma\'nski showed the above result under the additional assumption that the space $X$ is so called deeply reduced. By \cite[Prop.\ 8]{Piszczek} this assumption is superfluous. Moreover, based on results of Doma\'nski, Piszczek was able to show that the $(PLS)$-space $X \varepsilon F$ also satisfies the dual interpolation estimate for small theta \cite[Thm.\ 9]{Piszczek}.
\end{remark}
\begin{remark}\label{realanalytic-top}
The space $\mathcal{A}(\Omega ;F)$ is ultrabornological  for any $(PLS)$-space $F$ satisfiying the dual interpolation estimate for small theta, as immediately follows from Propositions \ref{charvectorvaluedfunctions}, \ref{realanalyticDIE}, and \ref{epsilonPLS}.
\end{remark}
\begin{proof}[Proof of Proposition \ref{topologyvectorvalued}(ii)]
We use the same technique as in Proposition \ref{topologytestspaces}. Therefore, we first give a representation of the dual of $\mathcal{E}^{\{M_p\}}(\Omega; F)$. Let $(K_n)_{n \in N}$ be an exhaustion by compact subsets of $\Omega$. By Proposition \ref{charvectorvaluedfunctions}, \cite[Prop.\ 1.5]{Komatsu3}, and \cite[Prop.\ 2.3]{Komatsu3}, we have the following isomorphisms of l.c.s.
$$
\mathcal{E}'^{\{M_p\}}(\Omega; F) \cong (\mathcal{E}^{\{M_p\}}(\Omega) \widehat{\otimes} F)' \cong (\varprojlim_{n\in \N} \mathcal{E}^{\{M_p\}}[K_n] \widehat{\otimes} F)' \cong \varinjlim_{n \in \N} L_\beta( \mathcal{E}^{\{M_p\}}[K_n], F').
$$
Let  $K \Subset \R^d$. Using the isomorphism $L(\mathcal{E}^{\{M_p\}}[K], F') \cong L(F, \mathcal{E}'^{\{M_p\}}[K])$ and the Pt\'ak closed graph theorem, one deduces
\begin{align} \label{subsupport}
&L(\mathcal{E}^{\{M_p\}}[K], F') \nonumber \\
&\cong \{ \mathbf{f} \in L(\mathcal{E}^{\{M_p\}}(\R^d), F') \, : \, y \circ \mathbf{f} \in \mathcal{E}'^{\{M_p\}}[K], \quad  \forall y \in F \},
\end{align}
as vector spaces. By Proposition \ref{VogtandWeng}, it suffices to show that the inductive spectrum 
$$
(L_\beta( \mathcal{E}^{\{M_p\}}[K_n], F'))_{n \in \N}
$$
 is $\alpha$-regular. Let $B \subset \mathcal{E}'^{\{M_p\}}(\Omega; F)$ be bounded. Since the canonical inclusion mapping $\mathcal{E}'^{\{M_p\}}(\Omega; F)  \rightarrow \mathcal{A}'(\Omega; F)$ is continuous, Remark \ref{realanalytic-top} implies that $B \subset L(\mathcal{A}[K_n], F')$ for some $n \in \N$. Hence, by \eqref{subsupport} and Proposition \ref{support}, we obtain that $B \subset  L( \mathcal{E}^{\{M_p\}}[K_n], F')$. 
\end{proof}
\section{The Cousin problem}\label{sect-Cousin}
We are ready to solve the Cousin problem for quasianalytic functions.
\subsection{Scalar-valued case}
It is natural to formulate the Cousin problem in the language of cohomology groups with coefficients in a sheaf. We therefore start with a brief discussion of the basic notions from this theory. For a detailed exposition, we refer to \cite[Chap.\ 4]{Morimoto}. 

Let $X$ be a topological space and let $\mathcal{F}$ be a sheaf on $X$. We denote  by $\Gamma(U, \mathcal{F})$ the sections of $\mathcal{F}$ on an open set $U$ of $X$.  Let $\mathcal{M} = \{ U_i \, : i \in I \}$ be a collection of open subsets of $X$. We write
$$
U_{i_0, \ldots, i_p} = U_{i_0} \cap \cdots \cap U_{i_p}, \qquad p \in \N, i_0, \ldots, i_p \in I.
$$
We define $C^p(\mathcal{M}, \mathcal{F})$, $p \in \N$, as the set consisting of collections $\varphi = (\varphi_{i_0, \ldots, i_p}) \in \prod_{(i_0, \ldots, i_p) \in I^{p+1}}  \Gamma(U_{i_0, \ldots, i_p}, \mathcal{F})$ which are antisymmetric with respect to the indices $i_0, \ldots, i_p$. For $\varphi \in C^p(\mathcal{M}, \mathcal{F})$ we define $\delta_p \varphi \in C^{p+1}(\mathcal{M}, \mathcal{F})$ as
$$
(\delta_p \varphi)_{i_0, \ldots, i_{p+1}} = \sum_{j = 0}^{p+1} (-1)^j \varphi_{i_0, \ldots, \widehat{i_j}, \ldots i_{p+1}| U_{i_0, \ldots, i_{p+1}}}, \qquad i_0, \ldots, i_{p+1} \in I,
$$
where, as usual, the hat mark on $\widehat{i_j}$ means that the index $i_j$ is omitted. Since $\delta_{p+1} \circ \delta_p = 0$, we have the complex
\begin{equation}
\begin{tikzpicture}
  \matrix (m) [matrix of math nodes, row sep=2em, column sep=2em]
    {0 & C^0(\mathcal{M}, \mathcal{F}) & C^1(\mathcal{M}, \mathcal{F})  & C^2(\mathcal{M}, \mathcal{F}) & \cdots. \\
   };
  
  { [start chain] \chainin (m-1-1);
\chainin (m-1-2);
\chainin (m-1-3) [join={node[above,labeled] {\delta_0}}];
\chainin (m-1-4)[join={node[above,labeled] {\delta_1}}];
\chainin (m-1-5)[join={node[above,labeled] {\delta_2}}];
}
\end{tikzpicture} 
\label{complexcovering}
\end{equation}
Define $Z^p(\mathcal{M}, \mathcal{F}) = \operatorname{ker} \delta_p$, $B^p(\mathcal{M}, \mathcal{F}) = \operatorname{Im} \delta_{p-1}$ ($B^0(\mathcal{M}, \mathcal{F}) = \{0\}$), and
$$
H^p(\mathcal{M}, \mathcal{F}) = Z^p(\mathcal{M}, \mathcal{F}) / B^p(\mathcal{M}, \mathcal{F}), \qquad p \in \N,
$$
that is, the $p$-th cohomology group of the complex \eqref{complexcovering}.

Let $I' \subseteq I$ and set $\mathcal{M'} = \{ U_i \, : i \in I' \}$. By restricting the indices of an element of $C^p(\mathcal{M}, \mathcal{F})$ to $I'$, we can naturally define the restriction mapping $C^p(\mathcal{M}, 
\mathcal{F}) \rightarrow C^{p}(\mathcal{M'}, \mathcal{F})$. We write $C^p(\mathcal{M}, \mathcal{M'}, \mathcal{F})$ for the kernel of this mapping and define $H^p(\mathcal{M}, \mathcal{M'}, 
\mathcal{F})$ to be the $p$-th cohomology group of the complex 
\begin{center}
\begin{tikzpicture}
  \matrix (m) [matrix of math nodes, row sep=2em, column sep=2em]
    {0 & C^0(\mathcal{M}, \mathcal{M'}, \mathcal{F}) & C^1(\mathcal{M},\mathcal{M'}, \mathcal{F})  & C^2(\mathcal{M}, \mathcal{M'}, \mathcal{F}) & \cdots. \\
   };
  
  { [start chain] \chainin (m-1-1);
\chainin (m-1-2);
\chainin (m-1-3) [join={node[above,labeled] {\delta_0}}];
\chainin (m-1-4)[join={node[above,labeled] {\delta_1}}];
\chainin (m-1-5)[join={node[above,labeled] {\delta_2}}];
}
  
\end{tikzpicture}
\end{center}
We have the following complex of short exact sequences
\begin{center}
\begin{tikzpicture}
  \matrix (m) [matrix of math nodes, row sep=2em, column sep=2em]
    {& 0 & 0 & 0 &  \\ 
    0 & C^0(\mathcal{M}, \mathcal{M'}, \mathcal{F}) & C^0(\mathcal{M}, \mathcal{F})  & C^0(\mathcal{M'}, \mathcal{F}) & 0 \\
     0 & C^1(\mathcal{M}, \mathcal{M'}, \mathcal{F}) & C^1(\mathcal{M}, \mathcal{F})  & C^1(\mathcal{M'}, \mathcal{F}) & 0 \\ 
        & \vdots & \vdots & \vdots &  \\ };
  
  { [start chain] \chainin (m-2-1);
\chainin (m-2-2);
\chainin (m-2-3);
\chainin (m-2-4);
\chainin (m-2-5); }
  
  { [start chain] \chainin (m-3-1);
\chainin (m-3-2);
\chainin (m-3-3);
\chainin (m-3-4);
\chainin (m-3-5); }

{ [start chain] \chainin (m-1-2);
\chainin (m-2-2);
\chainin (m-3-2);
\chainin (m-4-2);
 }

{ [start chain] \chainin (m-1-3);
\chainin (m-2-3);
\chainin (m-3-3);
\chainin (m-4-3);
 }

{ [start chain] \chainin (m-1-4);
\chainin (m-2-4);
\chainin (m-3-4);
\chainin (m-4-4);
 }
\end{tikzpicture}
\end{center}
which yields the long exact sequence of cohomology groups \cite[Thm.\ B.2.1]{Morimoto}
\begin{equation}
\begin{tikzpicture}
  \matrix (m) [matrix of math nodes, row sep=1em, column sep=2em]
    {0 & H^0(\mathcal{M}, \mathcal{M'}, \mathcal{F}) & H^0(\mathcal{M}, \mathcal{F})  & H^0(\mathcal{M'}, \mathcal{F}) \\
      \phantom 0 & H^1(\mathcal{M}, \mathcal{M'}, \mathcal{F}) & H^1(\mathcal{M}, \mathcal{F})  & H^1(\mathcal{M'}, \mathcal{F})  \\ 
      \phantom 0 & H^2(\mathcal{M}, \mathcal{M'}, \mathcal{F}) & \cdots. \phantom{000000} &  \\};
  
{ [start chain] \chainin (m-1-1);
\chainin (m-1-2);
\chainin (m-1-3);
\chainin (m-1-4);
 }
 { [start chain] \chainin (m-2-1);
\chainin (m-2-2);
\chainin (m-2-3);
\chainin (m-2-4);
 }
 { [start chain] \chainin (m-3-1);
\chainin (m-3-2);
\chainin (m-3-3); 
 }
\end{tikzpicture}
\label{longexact} 
\end{equation}

We can now formulate the main theorem of this subsection. We write $\mathcal{E}^*$ for the sheaf of ultradifferentiable functions of class $\ast$ on $\R^d$. 
\begin{theorem}\label{Cousin-scalar} Let $\Omega \subseteq \R^d$ be open and let $\mathcal{M} = \{ \Omega_i \, : \, i \in I\}$ be an open covering of $\Omega$. Then, $H^1(\mathcal{M}, \mathcal{E}^\ast) = 0$. Explicitly, this means that the sequence 
\begin{equation}
\begin{tikzpicture}
  \matrix (m) [matrix of math nodes, row sep=2em, column sep=2em]
    {0 & \mathcal{E}^{\ast}(\Omega)& \prod_{i \in I}\mathcal{E}^{\ast}(\Omega_i)  &Z^1(\mathcal{M}, \mathcal{E}^\ast) & 0 \\
   };
  
  { [start chain] \chainin (m-1-1);
\chainin (m-1-2);
\chainin (m-1-3) [join={node[above,labeled] {}}];
\chainin (m-1-4)[join={node[above,labeled] {\delta}}];
\chainin (m-1-5)[join={node[above,labeled] {}}];
}
  
\end{tikzpicture}
\label{Cousin}
\end{equation}
is exact, where
$$
Z^1(\mathcal{M}, \mathcal{E}^\ast) = \{ (\varphi_{i,j}) \in  \prod_{i,j \in I}\mathcal{E}^{\ast}(\Omega_{i,j}) \, : \, \varphi_{i,j} + \varphi_{j,k} + \varphi_{k,i} = 0 \mbox{ on } \Omega_{i,j,k},\quad \forall i,j,k \in I\},  
$$
and
$$
\delta = \delta_0: \prod_{i \in I}\mathcal{E}^{\ast}(\Omega_i) \to Z^1(\mathcal{M}, \mathcal{E}^\ast): (\varphi_i) \rightarrow ((\varphi_j - \varphi_{i})_{|\Omega_{i,j}}).
$$
\end{theorem}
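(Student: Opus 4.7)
My plan is to realise the Cousin complex \eqref{Cousin} as the $\operatorname{Proj}^{0}$ of a short exact sequence of projective spectra of $(DFS)$-spaces and then to invoke Proposition~\ref{VogtandWeng} on the leftmost spectrum. Since $\mathcal{E}^{\ast}(\Omega)$ is ultrabornological --- by Proposition~\ref{topologytestspaces} in the Roumieu case, and by Fr\'echet-ness together with the classical Mittag-Leffler theorem in the Beurling case --- the associated $\operatorname{Proj}^{1}$ vanishes, and the six-term exact sequence for projective spectra transports this vanishing into the surjectivity of $\delta$ in \eqref{Cousin}.

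First I would reduce to the case $I=\N$. Since $\R^{d}$ is second countable, $\mathcal{M}$ admits a countable subcover $\{\Omega_{i(j)}\}_{j\in\N}$. Given a solution $(\psi'_{j})$ of the Cousin problem for this subcover, I extend to $\mathcal{M}$ by the local prescription $\psi_{i}(x) := \psi'_{j}(x) - \varphi_{i,i(j)}(x)$ for $x \in \Omega_{i}\cap\Omega_{i(j)}$. The cocycle relation defining $Z^{1}$ makes this independent of the choice of $j$ on overlaps, and the sheaf axiom then produces sections $\psi_{i}\in\mathcal{E}^{\ast}(\Omega_{i})$ satisfying $\psi_{j}-\psi_{i}=\varphi_{i,j}$ on $\Omega_{i,j}$. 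The same patching accommodates any refinement, so I may further replace the countable cover by a countable locally finite one, written $\{\Omega_{n}\}_{n\in\N}$.

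Next, fix a regular compact exhaustion $(K_{m})$ of $\Omega$ and, for each $m$ and each nonempty intersection $\Omega_{n_{0},\ldots,n_{p}}$ that meets $K_{m}$ (only finitely many by local finiteness), choose regular compacts $L^{(m)}_{n_{0},\ldots,n_{p}} \Subset \Omega_{n_{0},\ldots,n_{p}}$ containing $K_{m}\cap\overline{\Omega_{n_{0},\ldots,n_{p}}}$. The natural $(DFS)$-space spectra
\[
X_{m} := \mathcal{E}^{\ast}[K_{m}],\quad Y_{m} := \prod_{n}\mathcal{E}^{\ast}[L^{(m)}_{n}],\quad Z_{m} := \{(\phi_{n,n'})\in \textstyle\prod_{n,n'}\mathcal{E}^{\ast}[L^{(m)}_{n,n'}] : \text{cocycle}\},
\]
with restriction linking maps, have $\operatorname{Proj}^{0}$'s equal to the three terms of \eqref{Cousin} by \eqref{projgermseq}. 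Injectivity of $X_{m}\to Y_{m}$ and exactness at $Y_{m}$ are the sheaf separation and gluing axioms at level $m$. The surjectivity of $Y_{m}\to Z_{m}$ is a Cousin problem for germs on a compact with a finite cover; I would establish it by dualising and iterating Proposition~\ref{splitting}, which allows an arbitrary quasianalytic functional on $\bigcup_{n}L^{(m)}_{n}$ to be written as a sum of functionals carried by the individual pieces, the overlap contributions compensating. Combining the level-wise short exact sequence with $\operatorname{Proj}^{1}(X_{\bullet})=0$ and the long exact sequence of projective spectra then yields the exactness of \eqref{Cousin}.

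The main obstacle I anticipate is precisely this level-wise surjectivity $Y_{m}\to Z_{m}$: the absence of partitions of unity in the quasianalytic category rules out the standard soft-sheaf argument, so one must substitute the iterated splitting via Proposition~\ref{splitting} and carefully control the resulting bounds uniformly in $m$ with the aid of the structure theorems of Section~\ref{sect-structure}. This is the step where the scalar-valued analysis of the preceding sections is expected to feed most directly into the Cousin argument.
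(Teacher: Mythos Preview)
Your overall architecture---realising \eqref{Cousin} as $\operatorname{Proj}^{0}$ of a level-wise short exact sequence and then killing the obstruction via $\operatorname{Proj}^{1}\mathcal{X}=0$---is exactly the paper's strategy. The paper, however, organises the argument differently: it first proves the two-compact germ splitting (Proposition~\ref{splitting-local}), passes to two open sets by one $\operatorname{Proj}^{1}$ argument, inducts to finite covers combinatorially, and finally runs a second $\operatorname{Proj}^{1}$ argument over the tower of finite subcovers $\mathcal{M}_{n}=\{\Omega_{0},\dots,\Omega_{n}\}$. Your proposal telescopes all of this into a single spectrum indexed by a compact exhaustion of $\Omega$, which is more economical in principle but creates two concrete difficulties that the paper's staging avoids.

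The first is your claim that injectivity of $X_{m}\to Y_{m}$ and exactness at $Y_{m}$ are ``the sheaf separation and gluing axioms at level $m$''. Germ spaces $\mathcal{E}^{\ast}[K]$ do not form a sheaf, and with your compacts $L^{(m)}_{n_{0},\ldots,n_{p}}$ chosen independently there is no reason for $L^{(m)}_{n}\cap L^{(m)}_{n'}$ to coincide with, or even be comparable to, $L^{(m)}_{n,n'}$; hence agreement of germs on $L^{(m)}_{n,n'}$ does not let you glue over $\bigcup_{n}L^{(m)}_{n}$. The same mismatch obstructs the direct application of the splitting results to the surjectivity $Y_{m}\to Z_{m}$: Proposition~\ref{splitting-local} (the function-side statement you obtain by dualising Proposition~\ref{splitting}) is formulated for $K_{1}\cap K_{2}$, not for an auxiliary compact sitting inside it. This is fixable---take $L^{(m)}_{n}=\overline{V^{(m)}_{n}}$ for a finite open cover $\{V^{(m)}_{n}\}$ of a neighbourhood of $K_{m}$ with $V^{(m)}_{n}\Subset\Omega_{n}$, and set $L^{(m)}_{n,n'}=\overline{V^{(m)}_{n}\cap V^{(m)}_{n'}}$---but it must be said, and once said your level-wise sequence becomes precisely the paper's Steps~1--2 read at the germ level.

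Second, two smaller points. Proposition~\ref{splitting} is stated only for $\{M_p\}$; in the Beurling case the paper obtains Proposition~\ref{splitting-local} by passing to a suitable Roumieu class via Lemma~\ref{bettersequence2}, an argument your sketch does not supply. And your anticipated need to ``control the resulting bounds uniformly in $m$ with the aid of the structure theorems of Section~\ref{sect-structure}'' is unnecessary: the $\operatorname{Proj}$-machinery requires only algebraic exactness at each level together with $\operatorname{Proj}^{1}\mathcal{X}=0$; no quantitative uniformity enters, and the structure theorems play no role in the paper's proof of Theorem~\ref{Cousin-scalar}.
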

We shall prove this theorem in several steps.
\begin{lemma}\label{bettersequence2}
Let $M_p$ be a weight sequence satisfying $(M.1)$, $(M.2)'$, and $p! \prec M_p$. Then, for every $r_j \in \mathcal{R}$ there is $r'_j \in \mathcal{R}$ with $r'_j \leq r_j$, $j \in \N$, such that the sequence $M_p/\prod_{j = 0}^pr'_j$ also satisfies  $(M.1)$, $(M.2)'$, and $p! \prec M_p$.
\end{lemma}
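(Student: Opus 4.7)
The plan is to convert the three required properties of $N_p := M_p/\prod_{j=0}^p r'_j$ into direct constraints on $(r'_j)$ and then to produce $r'_j$ by an explicit recursion. Setting $n_p = N_p/N_{p-1} = m_p/r'_p$, one checks: $(M.2)'$ for $N_p$ is automatic, since $n_{p+1} = m_{p+1}/r'_{p+1}\leq m_{p+1}\leq AH^{p+1}$; $(M.1)$ for $N_p$ is equivalent to the sequence $(r'_p/m_p)$ being nonincreasing, i.e., $r'_{p+1}/r'_p\leq m_{p+1}/m_p$; and, once $(M.1)$ holds for $N_p$, the standard equivalence for log-convex sequences (apply Stirling to $(N_p/p!)^{1/p}$) yields $p!\prec N_p\iff n_p/p\to\infty$, i.e., $r'_p = o(m_p/p)$. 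Applied to $M_p$ itself, the same equivalence turns the hypothesis $p!\prec M_p$ into $m_p/p\to\infty$, which will provide the slack to meet all constraints.

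Next I would define $r'_j$ by a recursion that encodes both the $(M.1)$-constraint and the upper bound $r'_p = o(m_p/p)$. Let $b_p := \max\!\bigl(1,\ \inf_{q\geq p}(m_q/q)^{1/2}\bigr)$; then $(b_p)$ is nondecreasing, tends to infinity, and $b_p = o(m_p/p)$. Set $r'_0 := 1$ and, for $p\geq 0$,
\[
r'_{p+1} := \min\!\bigl(r_{p+1},\ r'_p\cdot m_{p+1}/m_p,\ b_{p+1}\bigr),
\]
with the convention that the middle term is omitted at $p=0$. By induction each term in the minimum is at least $r'_p$ (using $r'_p\leq r_p$, $m_{p+1}/m_p\geq 1$, and $r'_p\leq b_p\leq b_{p+1}$), so $(r'_p)$ is nondecreasing and $r'_p\leq r_p$. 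Dividing the recursion by $m_{p+1}$ exhibits $r'_{p+1}/m_{p+1}$ as the minimum of three quantities, one of which is $r'_p/m_p$, so $(r'_p/m_p)$ is nonincreasing and $(M.1)$ for $N_p$ is secured. The bound $r'_p\leq b_p = o(m_p/p)$ then gives $p!\prec N_p$ via the translation above.

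It remains to check that $r'_p\to\infty$. If instead $r'_p\leq C$ for all $p$, then since $r_p,b_p\to\infty$, for all sufficiently large $p$ the minimum defining $r'_{p+1}$ must be realized by the middle term, so $r'_{p+1} = r'_p\cdot m_{p+1}/m_p$ eventually. Iterating from some large $p_0$ gives $r'_N = r'_{p_0}\cdot m_N/m_{p_0}$, which tends to infinity with $m_N$, contradicting boundedness.

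The main obstacle is the tension between $(M.1)$ for $N_p$ and membership of $r'_j$ in $\mathcal{R}$: the former forces $(r'_p/m_p)$ to be nonincreasing and thereby caps the growth of $r'_p$ by that of $m_p$, while the latter demands $r'_p\to\infty$. A construction depending only on $j$, as in Lemma~\ref{bettersequence}, is therefore not available; instead the growth of $r'_p$ must be coupled to that of $m_p$, which is exactly what the recursion accomplishes. The hypothesis $p!\prec M_p$ is essential here, since through the equivalence above it delivers $m_p/p\to\infty$ and hence the room needed to enforce all three conditions on $N_p$ simultaneously.
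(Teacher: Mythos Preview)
Your proof is correct and follows essentially the same route as the paper: both define $r'_j$ by a three-term minimum recursion whose middle term $r'_{p}\,m_{p+1}/m_p$ secures $(M.1)$ for $N_p$, with the first term enforcing $r'_j\le r_j$ and the third term controlling growth so that $p!\prec N_p$. The only real difference is cosmetic: the paper invokes \cite[Lemma~3.4]{Komatsu3} to produce $k_j\in\mathcal{R}$ with $p!\subset M_p/\prod_{j=0}^p k_j$ and then takes $\sqrt{k_j}$ as the third term, whereas you build the analogous bound $b_p=\max\bigl(1,\inf_{q\ge p}(m_q/q)^{1/2}\bigr)$ by hand from the equivalence $p!\prec M_p\iff m_p/p\to\infty$, and you supply the contradiction argument for $r'_p\to\infty$ that the paper leaves to the reader.
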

\begin{proof} Using \cite[Lemma 3.4]{Komatsu3},  we first find $k_j \in \mathcal{R}$ such that $p! \subset M_p/ \prod_{j = 0}^p k_j$. The sequence $r'_j \in \mathcal{R}$ with $r'_0 =  r'_1 = 1$ and 
$$
r'_j = \min\left( r_j,\frac{m_j}{m_{j-1}}r'_{j-1}, \sqrt{k_j}\right), \qquad j \geq 2,
$$
satisfies all requirements.
\end{proof}
\begin{proposition}\label{splitting-local}
Let $K_1$ and $K_2$ be compact sets in $\R^d$. The sequence
\begin{center}
\begin{tikzpicture}
  \matrix (m) [matrix of math nodes, row sep=2em, column sep=2em]
    {0 & \mathcal{E}^{\ast}[K_1 \cup K_2]& \mathcal{E}^{\ast}[K_1] \times \mathcal{E}^{\ast}[K_2]  & \mathcal{E}^{\ast}[K_1 \cap K_2] & 0 \\
   };
  
  { [start chain] \chainin (m-1-1);
\chainin (m-1-2);
\chainin (m-1-3) [join={node[above,labeled] {}}];
\chainin (m-1-4)[join={node[above,labeled] {}}];
\chainin (m-1-5)[join={node[above,labeled] {}}];
}
  
\end{tikzpicture}
\end{center}
is exact.
\end{proposition}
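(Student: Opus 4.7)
My plan is to handle the three exactness statements in two batches: injectivity and exactness at the middle term are obtained by an elementary sheaf-theoretic gluing, while surjectivity of the last map---the only genuinely delicate point, since cutoffs are unavailable in the quasianalytic setting---is derived by duality from Proposition \ref{splitting}.

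For injectivity of the first map, if $\varphi\in\mathcal{E}^\ast[K_1\cup K_2]$ restricts to the zero germ on each $K_j$, then a representative $\tilde\varphi$ on a neighborhood $U$ of $K_1\cup K_2$ vanishes on open neighborhoods $V_1,V_2\subset U$ of $K_1$ and $K_2$ respectively, hence on $V_1\cup V_2\supset K_1\cup K_2$, so $\varphi=0$. For exactness at the middle, given $(\varphi_1,\varphi_2)$ with $\varphi_1|_{K_1\cap K_2}=\varphi_2|_{K_1\cap K_2}$, I would pick representatives $\tilde\varphi_j\in\mathcal{E}^\ast(U_j)$ with $U_j\supset K_j$ agreeing on some open $W\subset U_1\cap U_2$ containing $K_1\cap K_2$. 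Separating the disjoint compacta $K_1\setminus W$ and $K_2\setminus W$ by disjoint open sets $W_1,W_2$ and setting $V_j=(W\cup W_j)\cap U_j$ gives $K_j\subset V_j$ and $V_1\cap V_2\subset W$. The restrictions $\tilde\varphi_j|_{V_j}$ then coincide on $V_1\cap V_2$ and glue to an element of $\mathcal{E}^\ast(V_1\cup V_2)$ whose germ on $K_1\cup K_2$ maps to $(\varphi_1,\varphi_2)$.

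The main obstacle is the surjectivity of the last map, which I would obtain by transposing Proposition \ref{splitting}. Every $\mathcal{E}^\ast[K]$ is nuclear and reflexive---a $(DFN)$-space in the Roumieu case and an $(LFN)$-space in the Beurling case---so its strong dual is $\mathcal{E}'^\ast[K]$ and a second transposition restores $\mathcal{E}^\ast[K]$ topologically. In the Roumieu case Proposition \ref{splitting} directly provides a topologically exact short sequence of $(DFN)$-spaces with all arrows strict. Since strict morphisms between reflexive locally convex spaces dualize to strict morphisms (by Hahn--Banach for monomorphisms and by the closed range characterization for epimorphisms), the transposed sequence is again topologically exact, and one readily checks that its arrows are, up to an innocuous sign, the canonical maps of Proposition \ref{splitting-local}. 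In particular the last map is surjective. For the Beurling case, I would observe that the proof of Proposition \ref{splitting} carries over verbatim: Proposition \ref{support} is valid in both settings, and the Pt\'ak-type closed-range argument combined with the open mapping theorem works equally well for the reflexive $(FN)$- and $(LFN)$-spaces appearing on the Beurling side. Dualizing the resulting sequence yields the Beurling case of Proposition \ref{splitting-local}.
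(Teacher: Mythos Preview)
Your treatment of injectivity and exactness at the middle term is fine; the paper simply declares these ``clear.'' Your Roumieu argument for surjectivity is also correct and amounts to the same duality idea as the paper's: the paper just checks directly that the transpose $f\mapsto(-f,f)$ of the difference map is injective with closed range (the latter because the range equals the kernel of $(g_1,g_2)\mapsto g_1+g_2$, using Proposition~\ref{support}), and concludes surjectivity via the closed range theorem for the Fr\'echet spaces $\mathcal{E}'^{\{M_p\}}[K]$.

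The Beurling case is where your proposal has a genuine gap. Proposition~\ref{splitting} is stated and proved in the paper only in the Roumieu case, and its proof hinges on the spaces $\mathcal{E}'^{\{M_p\}}[K]$ being Fr\'echet: both the open mapping theorem invoked in the first line and the deduction ``transpose injective with closed range $\Rightarrow$ original map surjective'' are Fr\'echet-space facts. In the Beurling case the germ spaces $\mathcal{E}^{(M_p)}[K]$ are $(LFN)$-spaces and their strong duals are $(PLN)$-spaces; for such spaces the closed range theorem and Pt\'ak-type surjectivity criteria are \emph{not} available off the shelf---$(LF)$-spaces need not be B-complete. The same obstruction reappears in your subsequent dualization step. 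So the assertion that ``the Pt\'ak-type closed-range argument \ldots\ works equally well'' is precisely the missing ingredient, not a routine remark.

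The paper avoids all of this by an elementwise reduction to the Roumieu case: given $\varphi\in\mathcal{E}^{(M_p)}[K_1\cap K_2]$, one uses \cite[Lemma~3.4]{Komatsu3} to find $r_j\in\mathcal{R}$ with $\varphi\in\mathcal{E}^{\{N_p\}}[K_1\cap K_2]$ for $N_p=M_p/\prod_{j=0}^p r_j$, and then Lemma~\ref{bettersequence2} (proved just before the proposition for exactly this purpose) ensures one may take $N_p$ satisfying $(M.1)$, $(M.2)'$ and $p!\prec N_p$. The already-established Roumieu case for $N_p$ then furnishes preimages in $\mathcal{E}^{\{N_p\}}[K_j]\subset\mathcal{E}^{(M_p)}[K_j]$. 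This sidesteps the delicate functional-analytic questions entirely.
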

\begin{proof} We only need to show the surjectivity of the mapping
$$
\mathcal{E}^{\ast}[K_1] \times \mathcal{E}^{\ast}[K_2]  \rightarrow \mathcal{E}^{\ast}[K_1 \cap K_2]: (\varphi_1, \varphi_2) \rightarrow \varphi_2 -\varphi_1,
$$
the rest is clear.

\emph{Roumieu case}: The transpose of the above mapping is given by
$$
\mathcal{E}'^{\{M_p\}}[K_1 \cap K_2] \rightarrow \mathcal{E}'^{\{M_p\}}[K_1] \times \mathcal{E}'^{\{M_p\}}[K_2]: f \rightarrow (-f, f).
$$
The result is therefore a consequence of the fact that this mapping is injective and has closed range, as follows from Proposition \ref{support}.

\emph{Beurling case}: 
Let $\varphi \in \mathcal{E}^{(M_p)}[K_1 \cap K_2]$. By \cite[Lemma 3.4]{Komatsu3},  there is $r_j \in \mathcal{R}$ such that $\varphi \in \mathcal{E}^{\{M_p/ \prod_{j = 0}^p r_j\}}[K_1 \cap K_2]$. By Lemma \ref{bettersequence2}, we may assume without loss of generality that $M_p/\prod_{j = 0}^pr_j$ satisfies $(M.1)$, $(M.2)'$, and $p! \prec M_p$. The result now follows from the Roumieu case.

\end{proof}

\begin{proof}[Proof of Theorem \ref{Cousin-scalar}]
STEP $1$: $I= \{ 1,2\}$. Let $(\Omega_{j,n})_{n \in \N}$ be an exhaustion by relatively compact open subsets of $\Omega_j$, $j = 1,2$. Define the following projective spectra
$$
\mathcal{X} = ( \mathcal{E}^{\ast}[\overline{\Omega}_{1,n} \cup \overline{\Omega}_{2,n}])_{n \in \N}, \qquad \mathcal{Y} = ( \mathcal{E}^{\ast}[\overline{\Omega}_{1,n}] \times \mathcal{E}^\ast[\overline{\Omega}_{2,n}])_{n \in \N}, \qquad \mathcal{Z} = ( \mathcal{E}^{\ast}[\overline{\Omega}_{1,n} \cap \overline{\Omega}_{2,n}])_{n \in \N}.
$$
By Proposition \ref{splitting-local}, we have the following  exact sequence of projective spectra
\begin{center}
\begin{tikzpicture}
  \matrix (m) [matrix of math nodes, row sep=2em, column sep=2em]
    {0 & \mathcal{X} & \mathcal{Y} & \mathcal{Z} & 0. \\
   };
  { [start chain] \chainin (m-1-1);
\chainin (m-1-2);
\chainin (m-1-3) [join={node[above,labeled] {}}];
\chainin (m-1-4)[join={node[above,labeled] {}}];
\chainin (m-1-5)[join={node[above,labeled] {}}];
}

\end{tikzpicture}
\end{center}
Since
$$
\operatorname{Proj}^0 \mathcal{X} \cong \mathcal{E}^\ast(\Omega_1 \cup \Omega_2), \qquad \operatorname{Proj}^0 \mathcal{Y} \cong \mathcal{E}^\ast(\Omega_1) \times \mathcal{E}^\ast(\Omega_2), \qquad \operatorname{Proj}^0 \mathcal{Z} \cong \mathcal{E}^\ast(\Omega_1 \cap \Omega_2),
$$
it suffices to show that $\operatorname{Proj}^1 \mathcal{X} =0$.

\emph{Beurling case}: The spectrum $\mathcal{X}$ is equivalent (in the sense of \cite[Def.\ 3.1.6]{Wengenroth2}) to the spectrum  $\mathcal{X}_0 =(\mathcal{E}^{(M_p)}(\Omega_{1,n} \cup \Omega_{2,n}))_{n \in \N}$. Hence, by \cite[Prop.\ 3.1.7]{Wengenroth2}, we have $\operatorname{Proj}^1 \mathcal{X} \cong \operatorname{Proj}^1 \mathcal{X}_0$. Since the spectrum $\mathcal{X}_0$ consists of Fr\'echet spaces and is reduced, the Mittag-Leffler lemma (see e.g.\ \cite[Lemma 1.3]{Komatsu}, \cite[Thm.\ 3.2.1]{Wengenroth2}) implies that $\ \operatorname{Proj}^1 \mathcal{X}_0 = 0$.

\emph{Roumieu case}: Immediate consequence of Propositions \ref{VogtandWeng} and \ref{topologytestspaces}.

STEP $2$: $I$ is finite. This can be shown by using the first step and an induction argument (for details see the second step in the the proof of \cite[Thm.\ 2.3.1]{Morimoto}).

STEP $3$: $I$ is arbitrary. Since every open set of $\R^d$ is second countable, we may assume without loss of generality that $I$ is countable (set $I = \N$) and that $\Omega_i \Subset \Omega$ for all $i \in \N$.
 Define the following projective spectra
 \begin{equation}
\mathcal{X} = \left( \mathcal{E}^{\ast}\left(\bigcup_{i = 0}^n\Omega_i\right)\right)_{n \in \N}, \quad \mathcal{Y} = \left(\prod_{i = 0}^n \mathcal{E}^{\ast}(\Omega_i)\right)_{n \in \N}, \quad \mathcal{Z} = ( Z^1(\mathcal{M}_n,\mathcal{E}^{\ast}))_{n \in \N},
\label{spectra-top-exactness}
\end{equation}
where $\mathcal{M}_n = \{ \Omega_i \, : \, i = 0, \ldots, n \}$.
By the second step, we have the following exact sequence of projective spectra
\begin{equation}
\begin{tikzpicture}
  \matrix (m) [matrix of math nodes, row sep=2em, column sep=2em]
    {0 & \mathcal{X} & \mathcal{Y} & \mathcal{Z} & 0 \: . \\
   };
  { [start chain] \chainin (m-1-1);
\chainin (m-1-2);
\chainin (m-1-3) [join={node[above,labeled] {}}];
\chainin (m-1-4)[join={node[above,labeled] {}}];
\chainin (m-1-5)[join={node[above,labeled] {}}];
}
\end{tikzpicture}
\label{top-exact-seq}
\end{equation}
Since
$$
\operatorname{Proj}^0 \mathcal{X} \cong \mathcal{E}^\ast(\Omega), \qquad \operatorname{Proj}^0 \mathcal{Y} \cong \prod_{i \in \N}\mathcal{E}^\ast(\Omega_i), \qquad \operatorname{Proj}^0 \mathcal{Z} \cong Z^1(\mathcal{M},\mathcal{E}^{\ast}),
$$
it is enough to verify that $\operatorname{Proj}^1 \mathcal{X} =0$.

\emph{Beurling case}: Since the spectrum $\mathcal{X}$ consists of Fr\'echet spaces and is reduced, it follows again from the Mittag-Leffler lemma.

\emph{Roumieu case}: The spectrum $\mathcal{X}$ is equivalent to the spectrum $\mathcal{X}_0 = (\mathcal{E}^{\{M_p\}}[\bigcup_{i = 0}^n\overline{\Omega}_i])_{n \in \N}$. By Propositions \ref{VogtandWeng} and \ref{topologytestspaces}, we have $\operatorname{Proj}^1 \mathcal{X} \cong \operatorname{Proj}^1 \mathcal{X}_0 = 0$.
\end{proof}
\begin{remark}
\label{rmkHorCousin} It is worth comparing Theorem \ref{Cousin-scalar} with H\"{o}rmander's work \cite{Hormander} in the Roumieu case. Since Proposition \ref{splitting-local}  in this case follows directly from H\"{o}rmander's support theorem (Proposition \ref{support}), one may say that it is implicitly contained in his work. By merely combinatorial means, one easily deduces from Proposition \ref{splitting-local} that given finitely many compact sets $K_1,K_2,\dots, K_n$ in $\mathbb{R}^{d}$ and germs of quasianalytic functions $\varphi_{i,j}\in \mathcal{E}^{\{M_p\}}[K_{i}\cap K_j]$, subject to the co-cycle conditions $\varphi_{i,j} + \varphi_{j,k} + \varphi_{k,i} = 0$, there are germs $\varphi_{i}\in \mathcal{E}^{\{M_p\}}[K_{i}]$ such that $\varphi_{i,j}=\varphi_{j}-\varphi_{i}$. In the passage to open sets and (finite or infinite) open coverings, the essential ingredients for Theorem \ref{Cousin-scalar} are then Propositions \ref{VogtandWeng} and \ref{topologytestspaces}.  
\end{remark}

Next, we discuss the topological exactness of the sequence \eqref{Cousin}. We endow $\prod \mathcal{E}^{\ast}(\Omega_i)$ with the product topology and  $Z^1(\mathcal{M}, \mathcal{E}^\ast)$ with the relative topology induced by
$\prod \mathcal{E}^{\ast}(\Omega_{i,j})$ (endowed with the product topology). Notice that  $Z^1(\mathcal{M}, \mathcal{E}^\ast)$ is a closed subspace of $\prod\mathcal{E}^{\ast}(\Omega_{i,j})$ and that the mapping $\delta$ is continuous.
\begin{proposition}\label{Cousin-topology} The sequence \eqref{Cousin} is topologically exact if $I$ is countable.
\end{proposition}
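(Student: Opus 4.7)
Algebraic exactness has already been established in Theorem \ref{Cousin-scalar}, so what remains is to show that $\delta$ is open onto its image, equivalently, that the canonical bijection $\prod_i \mathcal{E}^\ast(\Omega_i) / \mathcal{E}^\ast(\Omega) \to Z^1(\mathcal{M}, \mathcal{E}^\ast)$ is a topological isomorphism.

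In the Beurling case this follows at once from Banach's open mapping theorem: since $I$ is countable, $\prod_i \mathcal{E}^{(M_p)}(\Omega_i)$ is a countable product of Fr\'echet spaces, hence Fr\'echet, and $Z^1(\mathcal{M},\mathcal{E}^{(M_p)})$ is a closed subspace of the Fr\'echet space $\prod_{i,j} \mathcal{E}^{(M_p)}(\Omega_{i,j})$, hence also Fr\'echet.

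In the Roumieu case the spaces involved are no longer metrizable and a more careful argument is needed. The plan is to combine the projective-spectrum machinery from the proof of Theorem \ref{Cousin-scalar} with de Wilde's open mapping theorem. Choosing compact exhaustions of $\Omega$ and each $\Omega_i$, one builds germ-level spectra $\widetilde{\mathcal{X}}, \widetilde{\mathcal{Y}}, \widetilde{\mathcal{Z}}$ consisting of $(DFN)$-spaces whose projective limits recover $\mathcal{E}^{\{M_p\}}(\Omega)$, $\prod_i \mathcal{E}^{\{M_p\}}(\Omega_i)$ and $Z^1(\mathcal{M}, \mathcal{E}^{\{M_p\}})$ as l.c.s., and which fit into a short exact sequence of spectra parallel to \eqref{top-exact-seq}. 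The Roumieu argument in the proof of Theorem \ref{Cousin-scalar} already yields $\operatorname{Proj}^1 \widetilde{\mathcal{X}} = 0$. For $\widetilde{\mathcal{Y}}$, Proposition \ref{topologytestspaces} applied to each $\Omega_i$ combined with Proposition \ref{VogtandWeng} shows that the germ spectrum of $\mathcal{E}^{\{M_p\}}(\Omega_i)$ has vanishing $\operatorname{Proj}^1$ for every $i$, and since the cobounding equation for $\widetilde{\mathcal{Y}}$ splits coordinate-wise, one concludes $\operatorname{Proj}^1 \widetilde{\mathcal{Y}} = 0$. The long exact sequence for the derived functor $\operatorname{Proj}$ then forces $\operatorname{Proj}^1 \widetilde{\mathcal{Z}} = 0$, and a second application of Proposition \ref{VogtandWeng} ensures that $Z^1(\mathcal{M}, \mathcal{E}^{\{M_p\}})$ is an ultrabornological $(PLS)$-space.

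At this point $\delta$ is a continuous linear surjection from the $(PLS)$-space $\prod_i \mathcal{E}^{\{M_p\}}(\Omega_i)$, which is in particular webbed, onto the ultrabornological $(PLS)$-space $Z^1(\mathcal{M}, \mathcal{E}^{\{M_p\}})$, so de Wilde's open mapping theorem yields the required openness. The main technical obstacle will be the careful choice of exhaustions so that the projective limits of $\widetilde{\mathcal{X}}, \widetilde{\mathcal{Y}}, \widetilde{\mathcal{Z}}$ coincide as locally convex spaces (and not merely as vector spaces) with the correct open-set spaces, together with the verification that the short exact sequence of these germ spectra is indeed exact at each finite level, which itself reduces to the finite-$I$ topological exactness and ultimately to Proposition \ref{splitting-local}.
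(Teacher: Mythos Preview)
Your Beurling argument matches the paper's. In the Roumieu case you take a different route. The paper first handles finite $I$ as a separate step: it uses Lemma~\ref{lemma1} to upgrade $H^1=0$ to $H^p=0$ for all $p\ge 1$, obtaining a finite exact complex of ultrabornological $(PLS)$-spaces, and then invokes the abstract Lemma~\ref{lemma2} to get topological exactness and ultrabornologicality of $Z^1$. For countable $I$ it passes to the spectra \eqref{spectra-top-exactness}, whose steps are the \emph{open-set} spaces $\mathcal{E}^{\{M_p\}}(\bigcup_{i\le n}\Omega_i)$, $\prod_{i\le n}\mathcal{E}^{\{M_p\}}(\Omega_i)$, $Z^1(\mathcal{M}_n,\mathcal{E}^{\{M_p\}})$ (so $(PLN)$-, not $(DFN)$-spaces), and appeals to \cite[Thm.~3.3]{Wengenroth2} rather than Proposition~\ref{VogtandWeng} to conclude that $\delta$ is open. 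Your plan instead builds genuine germ-level $(DFN)$-spectra, gets $\operatorname{Proj}^1\widetilde{\mathcal{Z}}=0$ from the six-term $\operatorname{Proj}$ sequence and $\operatorname{Proj}^1\widetilde{\mathcal{Y}}=0$, and then applies Proposition~\ref{VogtandWeng} plus De Wilde. This works provided you take a diagonal construction $\widetilde{\mathcal{Y}}_n=\prod_{i\le n}\mathcal{E}^{\{M_p\}}[K_{i,n}]$ so that each step is a \emph{finite} product (an infinite product of $(DFN)$-spaces is not $(DFN)$), observe that $\widetilde{\mathcal{Z}}$ is reduced (needed for Proposition~\ref{VogtandWeng}; it follows from reducedness of $\widetilde{\mathcal{Y}}$ via the levelwise surjections), and note that only \emph{algebraic} exactness at each level is required for the $\operatorname{Proj}$ long exact sequence---this is exactly the finite-compact germ Cousin of Proposition~\ref{splitting-local} extended inductively as in Remark~\ref{rmkHorCousin}. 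The paper's route yields the finite-$I$ case (and in fact all $H^p=0$) as a byproduct, at the cost of the auxiliary Lemmas~\ref{lemma1} and~\ref{lemma2}; your route avoids those lemmas but transfers the work into the careful setup and verification of the germ spectra.
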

In the Roumieu case, we need the ensuing lemmas. 
\begin{lemma} \label{lemma1}
Let $X$ be a topological space and let $\mathcal{F}$ be a sheaf on $X$. Suppose that $H^1(\mathcal{M},\mathcal{F}) = 0$ for all finite open coverings $\mathcal{M}$. Then, $H^p(\mathcal{M},\mathcal{F}) = 0$ for all $p \geq 1$ and all finite open coverings $\mathcal{M}$.
\end{lemma}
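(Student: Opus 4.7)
The plan is to prove this by a double induction, outer on the cohomological degree $p$ and inner on the cardinality $n=|I|$ of the covering. The base case $p=1$ is exactly the hypothesis, so assume $p\geq 2$ and that $H^{q}(\mathcal{N},\mathcal{F})=0$ for every finite open covering $\mathcal{N}$ and every $1\leq q<p$. For the inner induction, the case $n=1$ is trivial: if $\mathcal{M}=\{U_0\}$, then every tuple $(\varphi_{i_0,\ldots,i_p})$ of length $p+1\geq 2$ has repeated indices, so antisymmetry forces $C^{p}(\mathcal{M},\mathcal{F})=0$.

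For the inductive step, write $\mathcal{M}=\{U_0,U_1,\ldots,U_{n-1}\}$, let $\mathcal{M}'=\{U_1,\ldots,U_{n-1}\}$, and set $\mathcal{M}''=\{U_0\cap U_i : 1\leq i\leq n-1\}$. The plan is to extract two facts from the long exact sequence \eqref{longexact}:
$$
\cdots \to H^{p-1}(\mathcal{M}',\mathcal{F}) \to H^{p}(\mathcal{M},\mathcal{M}',\mathcal{F}) \to H^{p}(\mathcal{M},\mathcal{F}) \to H^{p}(\mathcal{M}',\mathcal{F}) \to \cdots .
$$
By the outer induction $H^{p-1}(\mathcal{M}',\mathcal{F})=0$, and by the inner induction $H^{p}(\mathcal{M}',\mathcal{F})=0$, so $H^{p}(\mathcal{M},\mathcal{F})\cong H^{p}(\mathcal{M},\mathcal{M}',\mathcal{F})$. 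It therefore suffices to show that $H^{p}(\mathcal{M},\mathcal{M}',\mathcal{F})=0$.

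The main step is an identification at the cochain level: $C^{p}(\mathcal{M},\mathcal{M}',\mathcal{F})\cong C^{p-1}(\mathcal{M}'',\mathcal{F})$, compatible with differentials up to sign. Any $\varphi\in C^{p}(\mathcal{M},\mathcal{M}',\mathcal{F})$ vanishes on tuples with all indices in $\{1,\ldots,n-1\}$, so by antisymmetry it is determined by
$$
\psi_{i_1,\ldots,i_p}:=\varphi_{0,i_1,\ldots,i_p}\in\Gamma(U_0\cap U_{i_1}\cap\cdots\cap U_{i_p},\mathcal{F}),\qquad i_j\in\{1,\ldots,n-1\},
$$
giving an antisymmetric cochain $\psi\in C^{p-1}(\mathcal{M}'',\mathcal{F})$. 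Evaluating $\delta_p\varphi$ at a tuple $(0,i_1,\ldots,i_{p+1})$ with $i_j\geq 1$, the leading term $\varphi_{i_1,\ldots,i_{p+1}}$ vanishes because those indices lie entirely in $\mathcal{M}'$, and the remaining terms reproduce $-(\delta'_{p-1}\psi)_{i_1,\ldots,i_{p+1}}$, where $\delta'$ is the \v{C}ech differential for $\mathcal{M}''$. Thus the two complexes are isomorphic (with a degree shift) and
$$
H^{p}(\mathcal{M},\mathcal{M}',\mathcal{F})\cong H^{p-1}(\mathcal{M}'',\mathcal{F}).
$$
Since $\mathcal{M}''$ is a finite open covering of an open subset of $U_0$ and $p-1\geq 1$, the outer induction yields $H^{p-1}(\mathcal{M}'',\mathcal{F})=0$, whence $H^{p}(\mathcal{M},\mathcal{F})=0$.

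The routine part is assembling the two inductions with the long exact sequence. The main obstacle is the cochain-level identification $C^{p}(\mathcal{M},\mathcal{M}',\mathcal{F})\cong C^{p-1}(\mathcal{M}'',\mathcal{F})$: it is essentially bookkeeping with antisymmetric indices, but the signs must be tracked carefully to confirm that the isomorphism intertwines the differentials. Once this is in place, the proof closes with no further input.
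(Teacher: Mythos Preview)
Your proof is correct and follows essentially the same approach as the paper: both use the long exact sequence \eqref{longexact} together with the cochain-level identification $C^{p}(\mathcal{M},\mathcal{M}',\mathcal{F})\cong C^{p-1}(\mathcal{M}'',\mathcal{F})$ to reduce to a smaller covering. The only organizational difference is that the paper runs a single induction on $|\mathcal{M}|$ (establishing the vanishing for all $p\geq 1$ simultaneously at each step, with $p=1$ supplied by the hypothesis and $p\geq 2$ by the relative-cohomology argument), whereas you structure it as a double induction with the outer variable $p$; the substance is the same.
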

\begin{proof} We use induction on  $N = |\mathcal{M}|$. The case $N = 1$ is clear. Suppose that the result holds for $N$. Let $\mathcal{M} = \{\Omega_i \, : \, i = 0, \ldots, N\}$ be an arbitrary open covering and define $\mathcal{M}' = \{\Omega_i \, : \, i = 0, \ldots, N-1\}$. Using the induction hypothesis, we obtain $H^p(\mathcal{M}',\mathcal{F}) = 0$ for all $p \geq 1$. Hence, the long exact sequence of cohomology groups \eqref{longexact} implies that $H^p(\mathcal{M},\mathcal{F}) \cong H^p(\mathcal{M}, \mathcal{M}', \mathcal{F})$ for all $p \geq 2$. A straightforward calculation yields $C^p(\mathcal{M}, \mathcal{M}',\mathcal{F}) \cong C^{p-1}(\widetilde{\mathcal{M}},\mathcal{F})$ for all $p \geq 1$, where  $\widetilde{\mathcal{M}} = \{\Omega_i \cap \Omega_n \, : \, i = 0, \ldots, N-1\}$. Therefore, we have $H^p(\mathcal{M},\mathcal{F}) \cong H^p(\mathcal{M}, \mathcal{M}', \mathcal{F}) \cong H^{p-1}(\widetilde{\mathcal{M}},\mathcal{F}) =  0$, for all $p \geq 2$, where in the last inequality we have used the induction hypothesis.
\end{proof}

\begin{lemma} \label{lemma2}
Let 
\begin{center}
\begin{tikzpicture}
  \matrix (m) [matrix of math nodes, row sep=2em, column sep=2em]
    {0 & X_0 & X_1   &\cdots & X_N & 0 \\
   };
  
  { [start chain] \chainin (m-1-1);
\chainin (m-1-2);
\chainin (m-1-3) [join={node[above,labeled] {d_0}}];
\chainin (m-1-4)[join={node[above,labeled] {d_1}}];
\chainin (m-1-5)[join={node[above,labeled] {d_{N-1}}}];
\chainin (m-1-6)[join={node[above,labeled] {}}];
}

\end{tikzpicture}
\end{center}
be an exact sequence of  ultrabornological $(PLS)$-spaces. Then, the sequence is automatically topologically exact and $\ker d_j$ is an ultrabornological $(PLS)$-space for each $j = 0, \ldots, N-1$.
\end{lemma}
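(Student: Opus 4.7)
The plan is to prove by induction on $j \in \{0, 1, \ldots, N-1\}$ that $d_j$ is a topological homomorphism onto its image and that $\operatorname{Im}(d_j) = \ker(d_{j+1})$ (with $\operatorname{Im}(d_{N-1}) = X_N$) is an ultrabornological $(PLS)$-space. The whole argument rests on a single functional-analytic input, De Wilde's open mapping theorem, which guarantees that any continuous linear surjection from an ultrabornological l.c.s.\ onto a webbed l.c.s.\ is open. Alongside this, I would freely use three standard preservation facts: every $(PLS)$-space is webbed (as a closed subspace of a countable product of $(DFS)$-spaces), closed subspaces of $(PLS)$-spaces are again $(PLS)$-spaces, and quotients of ultrabornological l.c.s.\ by closed subspaces remain ultrabornological.

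For the base case $j = 0$, exactness at $X_0$ makes $d_0 : X_0 \to X_1$ continuous and injective, with image $\ker(d_1)$, a closed subspace of the $(PLS)$-space $X_1$. Thus $\ker(d_1)$ is a webbed $(PLS)$-space, and De Wilde's theorem applied to the continuous bijection $d_0 : X_0 \to \ker(d_1)$ turns it into a topological isomorphism, so $\ker(d_1)$ inherits ultrabornologicality from $X_0$.

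For the inductive step, assume $\ker(d_j)$ is an ultrabornological $(PLS)$-space. Then $X_j/\ker(d_j)$ is ultrabornological, being a quotient of an ultrabornological space by a closed subspace, while the factorization
\[
X_j \twoheadrightarrow X_j/\ker(d_j) \xrightarrow{\;\widetilde{d_j}\;} \operatorname{Im}(d_j) \hookrightarrow X_{j+1}
\]
exhibits $\widetilde{d_j}$ as a continuous bijection whose target $\operatorname{Im}(d_j) = \ker(d_{j+1})$ (or $X_N$, when $j = N-1$) is a closed subspace of $X_{j+1}$ and hence a webbed $(PLS)$-space. One further application of De Wilde's theorem promotes $\widetilde{d_j}$ to a topological isomorphism, yielding both that $d_j$ is a topological homomorphism onto its image and that $\ker(d_{j+1})$ is ultrabornological. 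The only real technical point is the repeated appeal to De Wilde's theorem; once the three preservation facts above are in hand, the induction propagates smoothly along the sequence and simultaneously delivers topological exactness and the ultrabornologicality of all kernels.
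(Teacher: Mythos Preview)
Your argument has a genuine gap: you have stated De Wilde's open mapping theorem with the roles of ``ultrabornological'' and ``webbed'' reversed. The correct version (cf.\ \cite[Thm.\ 24.30]{Meise}) says that a continuous linear surjection $T:E\to F$ is open provided the \emph{domain} $E$ is webbed and the \emph{codomain} $F$ is ultrabornological. With this in hand, your forward induction becomes circular. In the base case you apply the theorem to the bijection $d_0:X_0\to\ker d_1$; here $X_0$ is certainly webbed (being $(PLS)$), but to invoke De Wilde you need $\ker d_1$ to be ultrabornological, which is precisely the conclusion you are after. The same circularity recurs at every inductive step: you need $\ker d_{j+1}$ ultrabornological in order to apply the open mapping theorem to $\widetilde{d_j}$, yet that is what you intend to deduce.

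The paper avoids this by running the induction \emph{backwards} from $j=N-1$ and by invoking an additional ingredient you do not use: the result of Doma\'nski--Vogt \cite[Cor.\ 1.4]{Domanski} that a closed subspace $A$ of an ultrabornological $(PLS)$-space $X$ is ultrabornological if and only if $X/A$ is complete. Starting with $d_{N-1}:X_{N-1}\to X_N$, the codomain $X_N$ is ultrabornological by hypothesis, so the correctly oriented De Wilde theorem yields $X_{N-1}/\ker d_{N-1}\cong X_N$ topologically; since $X_N$ is complete, the Doma\'nski--Vogt criterion then gives that $\ker d_{N-1}$ is ultrabornological, and one continues downwards. Your three ``preservation facts'' alone are not enough to close the loop; the passage from completeness of the quotient back to ultrabornologicality of the kernel is the missing piece.
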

\begin{proof}
Since every $(PLS)$-space $X$  has a strict ordered web, De Wilde's open mapping theorem \cite[Thm.\ 24.30]{Meise} implies that any linear continuous surjective mapping $X \rightarrow Y$, with $Y$ ultrabornological, is a topological homomorphism. Moreover, a closed subspace $A$ of an ultrabornological $(PLS)$-space $X$ is ultrabornological if and only if $X/ A$ is complete \cite[Cor.\ 1.4]{Domanski}. Combining these two facts, we obtain the desired result.
\end{proof}
\begin{proof}[Proof of Proposition \ref{Cousin-topology}] In the Beurling case, the statement is a consequence of the open mapping theorem for Fr\'{e}chet spaces. We now consider the Roumieu case. Since the countable product of $(PLS)$-spaces is a $(PLS)$-space and a closed subspace of a $(PLS)$-space is again a $(PLS)$-space, the spaces appearing in \eqref{Cousin} are all $(PLS)$-spaces. We divide the proof into two steps.

STEP $1$: $I$ is finite. Suppose $I= \{ 0, \ldots, N\}$ for some $N \in \N$. Theorem \ref{Cousin-scalar} and Lemma \ref{lemma1} imply that the sequence 
\begin{center}
\begin{tikzpicture}
  \matrix (m) [matrix of math nodes, row sep=2em, column sep=2em]
    {0 & \mathcal{E}^{\ast}(\Omega)& \prod_{i =0}^N\mathcal{E}^{\ast}(\Omega_i)  &C^1(\mathcal{M}, \mathcal{E}^\ast) & \cdots & C^N(\mathcal{M}, \mathcal{E}^\ast) &  0 \\
   };
  
  { [start chain] \chainin (m-1-1);
\chainin (m-1-2);
\chainin (m-1-3) [join={node[above,labeled] {}}];
\chainin (m-1-4)[join={node[above,labeled] {}}];
\chainin (m-1-5)[join={node[above,labeled] {}}];
\chainin (m-1-6)[join={node[above,labeled] {}}];
\chainin (m-1-7)[join={node[above,labeled] {}}];

}  
\end{tikzpicture}
\end{center}
is exact. Notice that $C^p(\mathcal{M}, \mathcal{E}^\ast)$ is isomorphic to a finite product of spaces of the form $\mathcal{E}^{\ast}(\Omega_{i_0, \ldots, i_p})$, $0 \leq i_0 < i_1 < \cdots < i_p \leq N$ . We endow it with the product topology. In such a way the linking mappings in the above sequence become continuous. Moreover, since a finite product of ultrabornological spaces is again an ultrabornological space, the result follows from Proposition \ref{topologytestspaces} and Lemma \ref{lemma2}.

STEP $2$: $I$ is countable (set $I = \N$). Consider the projective spectra defined in \eqref{spectra-top-exactness}. By the first step we know that the the complex \eqref{top-exact-seq} consists of topologically exact sequences. Since every $(PLS)$-space $X$  has a strict ordered web and $\operatorname{Proj}^1 \mathcal{X} =0$ (see the third step in the proof of Theorem \ref{Cousin-scalar}), \cite[Thm.\  3.3]{Wengenroth2} implies that the mapping $\delta$ appearing in \eqref{Cousin} is a topological homomorphism. As the countable product of ultrabornological spaces is again an ultrabornological space and the quotient of an ultrabornological space with a closed subspace is again ultrabornological, we obtain that $Z^1(\mathcal{M}, \mathcal{E}^{\{M_p\}})$ is an ultrabornological $(PLS)$-space from Proposition \ref{topologytestspaces}. Furthermore, it implies that $\operatorname{ker}\delta$ is ultrabornological (cf. the proof of Lemma \ref{lemma2}). Hence  $\mathcal{E}^{\ast}(\Omega) \rightarrow \prod_{i \in \N}\mathcal{E}^{\ast}(\Omega_i)$ is a topological embedding by De Wilde's open mapping theorem.

\end{proof}
\subsection{Vector-valued case}
We now address the  Cousin problem for spaces of $F$-valued quasianalytic functions for suitable l.c.s.\ $F$. We write $\mathcal{E}^*(\cdot \, ; F)$ for the sheaf of $F$-valued ultradifferentiable functions of class $\ast$ on $\R^d$. 
\begin{theorem}\label{Cousin-vector} Let $\Omega \subseteq \R^d$ be open, let $\mathcal{M} = \{ \Omega_i \, : \, i \in I\}$ be an open covering of $\Omega$, and let $F$ be a locally convex space. Then, $H^1(\mathcal{M}, \mathcal{E}^*(\cdot \, ; F)) = 0$ in the following cases:
\begin{itemize}
\item[$(i)$] For $\ast = (M_p)$ and $F$ a Fr\'echet space,
\item[$(ii)$] for $\ast = \{M_p\}$ and  $F$ a $(DFS)$-space such that $F'$ satisfies $(\underline{DN})$,
\item[$(iii)$] for $\ast = \{p!\}$ and $F$ a $(PLS)$-space satisfying the dual interpolation estimate for small theta.
\end{itemize}
\end{theorem}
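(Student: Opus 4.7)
My plan is to mirror the three-step proof of Theorem \ref{Cousin-scalar} with $F$-valued spaces, exploiting the isomorphism $\mathcal{E}^{\ast}(\Omega;F)\cong\mathcal{E}^{\ast}(\Omega)\varepsilon F$ from Proposition \ref{charvectorvaluedfunctions} together with the topological results of Proposition \ref{topologyvectorvalued} and Remark \ref{realanalytic-top}. As in the scalar case, by second countability of $\R^{d}$ I would first reduce to countable coverings, then handle $I=\{1,2\}$, then finite $I$ by induction, and finally countable $I$ via a projective spectrum argument.

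For $I=\{1,2\}$, I would fix exhaustions $(\Omega_{j,n})_n$ of $\Omega_j$ and introduce the projective spectra $\mathcal{X}_F=(\mathcal{E}^{\ast}[\overline{\Omega}_{1,n}\cup\overline{\Omega}_{2,n};F])_n$, $\mathcal{Y}_F=(\mathcal{E}^{\ast}[\overline{\Omega}_{1,n};F]\times\mathcal{E}^{\ast}[\overline{\Omega}_{2,n};F])_n$, and $\mathcal{Z}_F=(\mathcal{E}^{\ast}[\overline{\Omega}_{1,n}\cap\overline{\Omega}_{2,n};F])_n$. The first key step is to establish an $F$-valued analogue of Proposition \ref{splitting-local}, yielding exactness of $0\to\mathcal{X}_F\to\mathcal{Y}_F\to\mathcal{Z}_F\to 0$; I would derive this by applying $\varepsilon F$ to the scalar splitting and exploiting the nuclearity of the scalar germ spaces. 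The next task is $\operatorname{Proj}^1\mathcal{X}_F=0$. In case (i), $\mathcal{X}_F$ is equivalent to a reduced Fr\'echet spectrum (by Proposition \ref{topologyvectorvalued}(i)), so the Mittag-Leffler lemma applies. In case (ii), the projective limit $\mathcal{E}^{\{M_p\}}(\Omega_1\cup\Omega_2;F)$ is ultrabornological $(PLS)$ by Proposition \ref{topologyvectorvalued}(ii); an equivalent spectrum $(\mathcal{E}^{\{M_p\}}[K_n]\varepsilon F)_n$ consists of $(DFS)$-spaces since $(DFN)\,\varepsilon\,(DFS)=(DFS)$ (\cite[Prop.\ 4.3]{Bierstedt}), so Proposition \ref{VogtandWeng} yields the vanishing. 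In case (iii), I would write $F=\varprojlim F_m$ with $F_m$ $(DFS)$-spaces and pass to a double-indexed $(DFS)$-spectrum $(\mathcal{A}[K_n]\varepsilon F_m)_{n,m}$, invoking Remark \ref{realanalytic-top}. Taking $\operatorname{Proj}^0$ then delivers the desired exactness for $I=\{1,2\}$.

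Finite $I$ follows by the same combinatorial induction as in the scalar proof. For countable $I=\N$ with $\Omega_i\Subset\Omega$, I would use the spectrum $\mathcal{X}_F=(\mathcal{E}^{\ast}(\bigcup_{i=0}^{n}\Omega_i;F))_n$ (and analogous $\mathcal{Y}_F,\mathcal{Z}_F$), deducing $\operatorname{Proj}^1\mathcal{X}_F=0$ by the same case analysis as in Step 1, now applied to the ultrabornological $(PLS)$-space $\mathcal{E}^{\ast}(\Omega;F)$ itself.

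I expect the vector-valued splitting lemma to be the main technical obstacle: one needs to verify that the scalar surjection of Proposition \ref{splitting-local} remains surjective after applying $\varepsilon F$. For case (ii) this should follow from a lifting argument based on the identification $X\varepsilon F=L_{\varepsilon}(F'_c,X)$ together with the approximation property of the $(DFN)$-spaces $\mathcal{E}^{\{M_p\}}[K]$. Case (iii) is the most delicate, since $F$ is only $(PLS)$; there one must combine the lifting with the dual interpolation estimates on both $F$ and $\mathcal{A}(\Omega)$ (Proposition \ref{realanalyticDIE}), analogously to the arguments underlying Proposition \ref{epsilonPLS}.
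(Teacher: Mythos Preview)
Your strategy diverges from the paper's and, as you yourself anticipate, runs into trouble precisely at the vector-valued splitting step. You propose to rerun the three-step scalar argument with $F$-valued germs, which forces you to prove an $F$-valued analogue of Proposition~\ref{splitting-local} and then separate $\operatorname{Proj}^{1}$ vanishing arguments for each of the three cases. But ``applying $\varepsilon F$ to the scalar splitting and exploiting nuclearity'' is not enough: exactness of a short exact sequence is \emph{not} automatically preserved by $\varepsilon F$, even when the scalar spaces are nuclear. This is exactly the content of Doma\'nski's Lemma~\ref{exacttensor}, which requires the first term to be $(PLN)$ and its $\varepsilon$-product with $F$ to be ultrabornological. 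At the germ level your spaces $\mathcal{E}^{(M_p)}[K]$ are $(LFN)$ rather than Fr\'echet, so neither the Fr\'echet $\widehat\otimes_\pi$-surjectivity argument nor Lemma~\ref{exacttensor} applies directly; and for case~(iii) your double-indexed $(DFS)$-spectrum sketch is too vague to stand on its own.

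The paper avoids all of this by applying $\varepsilon F$ \emph{once} to the entire scalar Cousin sequence~\eqref{Cousin}, rather than step by step. The key observation is the identification $Z^{1}(\mathcal{M},\mathcal{E}^{\ast}(\cdot\,;F))\cong Z^{1}(\mathcal{M},\mathcal{E}^{\ast})\varepsilon F$, so that $\delta_{F}=\delta\varepsilon\operatorname{id}_{F}$. This is why Proposition~\ref{Cousin-topology} was proved: it supplies the \emph{topological} exactness of the scalar sequence needed as input. For case~(i) one then simply uses that $\prod_{i}\mathcal{E}^{(M_p)}(\Omega_{i})$ and $Z^{1}(\mathcal{M},\mathcal{E}^{(M_p)})$ are nuclear Fr\'echet, so $\delta_{F}=\delta\widehat\otimes_{\pi}\operatorname{id}_{F}$ is a $\pi$-tensor of surjections between Fr\'echet spaces, hence surjective. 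For cases~(ii) and~(iii), a single invocation of Lemma~\ref{exacttensor}, with $X=\mathcal{E}^{\ast}(\Omega)$ a $(PLN)$-space and $X\varepsilon F$ ultrabornological by Proposition~\ref{topologyvectorvalued}(ii) (resp.\ Remark~\ref{realanalytic-top}), finishes the argument uniformly. No three-step induction, no vector-valued germ splitting, and no separate $\operatorname{Proj}^{1}$ computations are needed.
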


We shall need the following lemma in the proof of the Roumieu case.
\begin{lemma}\cite[Prop.\ 4.5]{Domanski2} \label{exacttensor}
Let
\begin{center}
\begin{tikzpicture}
  \matrix (m) [matrix of math nodes, row sep=2em, column sep=2em]
    {0 & X & Y & Z & 0 \\
   };
  { [start chain] \chainin (m-1-1);
\chainin (m-1-2);
\chainin (m-1-3) [join={node[above,labeled] {S}}];
\chainin (m-1-4)[join={node[above,labeled] {T}}];
\chainin (m-1-5)[join={node[above,labeled] {}}];
}

\end{tikzpicture}
\end{center}
be a topologically exact sequence of $(PLS)$-spaces and let $F$ be a $(PLS)$-space. Suppose that $X$ is a $(PLN)$-space and that the $(PLS)$-space $X\varepsilon F$ is ultrabornological. Then, the sequence
\begin{center}
\begin{tikzpicture}
  \matrix (m) [matrix of math nodes, row sep=2em, column sep=2em]
    {0 & X\varepsilon F & Y\varepsilon F & Z\varepsilon F & 0 \\
   };
  { [start chain] \chainin (m-1-1);
\chainin (m-1-2);
\chainin (m-1-3) [join={node[above,labeled] {S\varepsilon \operatorname{id}_F}}];
\chainin (m-1-4)[join={node[above,labeled] {T\varepsilon \operatorname{id}_F}}];
\chainin (m-1-5)[join={node[above,labeled] {}}];
}

\end{tikzpicture}
\end{center}
is exact.
\end{lemma}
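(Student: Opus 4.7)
The plan is to reduce the claim to a statement about projective limits of $(DFS)$-spaces and then invoke the Vogt--Wengenroth characterization (Proposition \ref{VogtandWeng}). The left-exactness of $(\cdot)\varepsilon F$ is standard via the interpretation $X\varepsilon F \cong L_\varepsilon(X'_c, F)$ combined with the exactness of taking equicontinuous duals for a topologically exact sequence of $(PLS)$-spaces, so the entire content of the lemma lies in the surjectivity of $T\varepsilon \operatorname{id}_F$.

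First I would choose compatible reduced projective representations: $X = \varprojlim X_n$ by $(DFN)$-spaces (available because $X$ is $(PLN)$), $Y = \varprojlim Y_n$, $Z = \varprojlim Z_n$ by $(DFS)$-spaces, and $F = \varprojlim F_m$ by $(DFS)$-spaces. Using the topological exactness of $0 \to X \to Y \to Z \to 0$ together with standard techniques for $(PLS)$-spectra (taking common cofinal subsequences of defining Banach seminorms and applying De Wilde's open mapping theorem), one arranges that for each $n$ the sequence $0\to X_n \to Y_n \to Z_n \to 0$ is itself topologically exact at the $(DFS)$-level.

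At each level, since $X_n$ is nuclear we have $X_n \varepsilon F_m = X_n\widehat{\otimes}_\varepsilon F_m = X_n \widehat{\otimes}_\pi F_m$, and the completed projective tensor product with a nuclear $(DFS)$-space is exact on short exact sequences of $(DFS)$-spaces; hence $0 \to X_n \varepsilon F_m \to Y_n \varepsilon F_m \to Z_n \varepsilon F_m \to 0$ is topologically exact for all $n,m$. Diagonalizing the double index, I obtain reduced projective spectra $\mathcal{X}^\varepsilon := (X_n\varepsilon F_n)_n$, $\mathcal{Y}^\varepsilon$, $\mathcal{Z}^\varepsilon$ of $(DFS)$-spaces that fit into a short exact sequence of projective spectra. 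By \cite[Prop.\ 1.5]{Komatsu3} the $\operatorname{Proj}^0$ of these three spectra are respectively $X\varepsilon F$, $Y\varepsilon F$ and $Z\varepsilon F$.

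Applying the $\operatorname{Proj}^0$/$\operatorname{Proj}^1$ long exact sequence recorded in Section \ref{section: projective/inductive spectra}, the surjectivity of $T\varepsilon \operatorname{id}_F$ follows as soon as $\operatorname{Proj}^1 \mathcal{X}^\varepsilon = 0$. This is precisely where the hypothesis enters: $\mathcal{X}^\varepsilon$ is a reduced spectrum of $(DFS)$-spaces (in fact of $(DFN)$-spaces, since each $X_n$ is $(DFN)$) whose projective limit $X\varepsilon F$ is ultrabornological by assumption, so Proposition \ref{VogtandWeng}, implication $(ii)\Rightarrow(i)$, yields $\operatorname{Proj}^1 \mathcal{X}^\varepsilon = 0$. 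The main obstacle I anticipate lies in the setup phase: matching the defining spectra of $X$, $Y$, $Z$ and $F$ so that they assemble into compatible level-wise topologically exact sequences of $(DFS)$-spaces, and verifying that the diagonal spectrum $\mathcal{X}^\varepsilon$ is reduced. The latter reduces to propagating the density of $X$ in each $X_n$ and of $F$ in each $F_m$ through the $\varepsilon$-product, which is standard but requires care in the absence of metrizability of the limits.
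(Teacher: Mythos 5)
First, a point of comparison: the paper does not prove this lemma at all; it is imported verbatim from Doma\'nski \cite[Prop.\ 4.5]{Domanski2}, so the only ``proof'' in the paper is the citation. Your outline does reproduce the skeleton of Doma\'nski's actual argument: represent $X$, $Y$, $Z$, $F$ by reduced spectra of $(DFS)$-spaces, establish exactness of the sequences $0\to X_n\varepsilon F_n\to Y_n\varepsilon F_n\to Z_n\varepsilon F_n\to 0$ level by level, and then deduce surjectivity of $T\varepsilon\operatorname{id}_F$ in the limit from $\operatorname{Proj}^1\bigl((X_n\varepsilon F_n)_n\bigr)=0$, which follows from the assumed ultrabornologicity of $X\varepsilon F$ and Proposition \ref{VogtandWeng} once the reducedness of the spectrum $(X_n\varepsilon F_n)_n$ is checked (density of $X\otimes F$). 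That last step, and the left-exactness, are fine.

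The gap is in the level-wise exactness, which is the analytic heart of the lemma and cannot be dispatched by tensor-product formalities. You write $X_n\varepsilon F_m=X_n\widehat{\otimes}_\pi F_m$ and invoke exactness of ``the completed projective tensor product with a nuclear $(DFS)$-space.'' But only $X$ is assumed $(PLN)$: the spaces $Y_n$, $Z_n$ and $F_m$ are merely $(DFS)$, so neither $Y_n\varepsilon F_m$ nor $Z_n\varepsilon F_m$ is a projective tensor product, and no exactness statement for $\widehat{\otimes}_\pi$ applies to the sequence you need. What must actually be proved is the surjectivity of $Y_n\varepsilon F_m\to Z_n\varepsilon F_m$; in the representation $\cdot\,\varepsilon F_m\cong L_\varepsilon((F_m)'_c,\cdot)$ this is a lifting problem for continuous linear maps from the $(FS)$-space $(F_m)'$ along the open surjection $Y_n\to Z_n$ with nuclear kernel $X_n$, i.e.\ a vanishing statement of the type $\operatorname{Ext}^1((F_m)',X_n)=0$. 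This is exactly where the nuclearity of $X$ does its work, and it is the content of the auxiliary results Doma\'nski establishes before \cite[Prop.\ 4.5]{Domanski2}; your sketch assumes it rather than proving it. A secondary, repairable omission: when you arrange the level-wise exact sequences by setting $X_n=\overline{\imath_n(X)}\subseteq Y_n$ and $Z_n=Y_n/X_n$, the identity $\varprojlim Z_n=Z$ is not automatic --- it needs $\operatorname{Proj}^1$ of $(X_n)_n$ to vanish. This does hold, since for $F\neq\{0\}$ the space $X$ is complemented in the ultrabornological space $X\varepsilon F$ and is therefore itself ultrabornological, but the point has to be made explicitly.
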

\begin{proof}[Proof of Theorem \ref{Cousin-vector}] As in the scalar-valued case, the vanishing of the first cohomology group $H^1(\mathcal{M}, \mathcal{E}^*(\cdot \, ; F))$ means that
\begin{center}
\begin{tikzpicture}
  \matrix (m) [matrix of math nodes, row sep=2em, column sep=2em]
    {0 & \mathcal{E}^{\ast}(\Omega;F)& \prod_{i \in I}\mathcal{E}^{\ast}(\Omega_i;F)  &Z^1(\mathcal{M}, \mathcal{E}^\ast(\cdot,F)) & 0 \\
   };
 { [start chain] \chainin (m-1-1);
\chainin (m-1-2);
\chainin (m-1-3) [join={node[above,labeled] {}}];
\chainin (m-1-4)[join={node[above,labeled] {\delta_F}}];
\chainin (m-1-5)[join={node[above,labeled] {}}];
}
  
\end{tikzpicture}
\end{center}
is exact, where
$$
Z^1(\mathcal{M}, \mathcal{E}^\ast(\cdot,F)) = \{ (\bm{\varphi}_{i,j}) \in  \prod_{i,j \in I}\mathcal{E}^{\ast}(\Omega_{i,j};F)  : \bm{\varphi}_{i,j} + \bm{\varphi}_{j,k} + \bm{\varphi}_{k,i} = 0 \mbox{ on } \Omega_{i,j,k},  \forall i,j,k \in I\},  
$$
and
$$
\delta_F = \prod_{i \in I}\mathcal{E}^{\ast}(\Omega_i;F) \to Z^1(\mathcal{M}, \mathcal{E}^\ast(\cdot,F)): (\bm{\varphi}_i) \rightarrow ((\bm{\varphi}_j- \bm{\varphi}_{i})_{|\Omega_{i,j}}).
$$
We may assume that $I$ is countable. Furthermore, we only need to show that $\delta_F$ is surjective, the rest is clear. Notice that we have the following isomorphisms of l.c.s.
\begin{align}\label{reprcousin}
&\mathcal{E}^{\ast}(\Omega;F) \cong \mathcal{E}^{\ast}(\Omega) \varepsilon F, \qquad \prod_{i \in I}\mathcal{E}^{\ast}(\Omega_i;F) \cong \left( \prod_{i \in I}\mathcal{E}^{\ast}(\Omega_i)\right) \varepsilon F, \nonumber \\
&Z^1(\mathcal{M}, \mathcal{E}^\ast(\cdot,F)) \cong Z^1(\mathcal{M}, \mathcal{E}^\ast) \varepsilon F.
\end{align}
The first two isomorphisms are consequences of Proposition \ref{charvectorvaluedfunctions} and \cite[Prop.\ 1.5]{Komatsu3}. We now show the third one. Since the $\varepsilon$-product of two injective linear topological homomorphisms is again an injective linear topological homomorphism, the space $Z^1(\mathcal{M}, \mathcal{E}^\ast) \varepsilon F$ is topologically isomorphic to a subspace $X$ of 
$$
\left(\prod_{i,j \in I}\mathcal{E}^{\ast}(\Omega_{i,j})\right)\varepsilon F  \cong \prod_{i,j \in I}\mathcal{E}^{\ast}(\Omega_{i,j};F). 
$$
Let us now prove that $X = Z^1(\mathcal{M}, \mathcal{E}^\ast(\cdot\, ;F))$. Let $(\bm{\varphi}_{i,j}) \in X$. Employing the representation $Z^1(\mathcal{M}, \mathcal{E}^\ast) \varepsilon F \cong L(F'_c, Z^1(\mathcal{M}, \mathcal{E}^\ast))$  we obtain that 
$$
(\langle y', \bm{\varphi}_{i,j}(\cdot) \rangle) \in Z^1(\mathcal{M}, \mathcal{E}^\ast), \qquad y' \in F',
$$
and, thus,
$$
0 = \langle y', \bm{\varphi}_{i,j}(x) \rangle + \langle y', \bm{\varphi}_{j,k}(x) \rangle + \langle y', \bm{\varphi}_{k,i}(x) \rangle = \langle y', \bm{\varphi}_{i,j}(x) +\bm{\varphi}_{j,k}(x) + \bm{\varphi}_{k,i}(x)\rangle, $$
for all $ y' \in F'$, $x \in \Omega_{i,j,k}$,
$i,j,k \in I$. This implies that $(\bm{\varphi}_{i,j}) \in Z^1(\mathcal{M}, \mathcal{E}^\ast(\cdot\, ;F))$. The converse inclusion can be shown similarly.  Finally, notice that $\delta_F = \delta \varepsilon \operatorname{id}_F$.

$(i)$: By the hereditary properties of nuclearity, we have that the spaces $\prod_{i \in I}\mathcal{E}^{(M_p)}(\Omega_i)$ and  $Z^1(\mathcal{M}, \mathcal{E}^{(M_p)})$ are nuclear Fr\'echet spaces. Hence, by \eqref{reprcousin}, we may represent $\delta_F$ as a tensor product of mappings,
$$
\delta_F = \delta \widehat{\otimes}\operatorname{id}_F :  \left( \prod_{i \in I}\mathcal{E}^{(M_p)}(\Omega_i)\right)  \widehat{\otimes} F \rightarrow Z^1(\mathcal{M}, \mathcal{E}^{(M_p)}) \widehat{\otimes} F.
$$
The result now follows from the solution to the scalar-valued Cousin problem (Theorem \ref{Cousin-scalar}) and the following well known fact: Given two surjective continuous linear mappings $T_1: X_1 \rightarrow Y_1$ and $T_2: X_2 \rightarrow Y_2$ between Fr\'echet spaces, the mapping 
$$
T_1 \widehat{\otimes}_\pi T_2: X_1 \widehat{\otimes}_\pi X_2 \rightarrow Y_1 \widehat{\otimes}_\pi Y_2
$$
is also surjective.

$(ii)$ and $(iii)$: In view of Lemma \ref{exacttensor} this follows directly from Proposition \ref{topologyvectorvalued} (Remark \ref{realanalytic-top} in the real analytic case) and Proposition \ref{Cousin-topology}.
\end{proof}

\end{document}